\theoremstyle{plain}
\newtheorem{thm}{\protect\theoremname}[section]
\theoremstyle{plain}
\newtheorem{conjecture}[thm]{\protect\conjecturename}
\theoremstyle{plain}
\newtheorem{lem}[thm]{\protect\lemmaname}
\theoremstyle{definition}
\newtheorem{defn}[thm]{\protect\definitionname}
\theoremstyle{plain}
\newtheorem{prop}[thm]{\protect\propositionname}
\newenvironment{proof}[1][\protect\proofname]{\par
	\normalfont\topsep6\p@\@plus6\p@\relax
	\trivlist
	\itemindent\parindent
	\item[\hskip\labelsep\scshape #1]\ignorespaces
}{%
	\endtrivlist\@endpefalse
}
\providecommand{\proofname}{Proof}
\theoremstyle{remark}
\newtheorem{rem}[thm]{\protect\remarkname}
\date{}
\DeclareMathOperator{\diam}{diam}
\DeclareMathOperator{\spn}{span}
\DeclareMathOperator{\ii}{\textbf{i}}
\DeclareMathOperator{\jj}{\textbf{j}}
\DeclareMathOperator{\uu}{\textbf{u}}
\DeclareMathOperator{\vv}{\textbf{v}}
\DeclareMathOperator{\supp}{supp}
\DeclareMathOperator{\cov}{cov}
\DeclareMathOperator{\rank}{rank}
\newcommand{\oR}{$\textrm{R}^*$}
\newcommand{\uR}{$\textrm{R}_*$}
\providecommand{\conjecturename}{Conjecture}
\providecommand{\definitionname}{Definition}
\providecommand{\lemmaname}{Lemma}
\providecommand{\propositionname}{Proposition}
\providecommand{\remarkname}{Remark}
\providecommand{\theoremname}{Theorem}
\begin{document}
\title{New results on embeddings of self-similar sets via renormalization}
\author{Amir Algom\thanks{Research supported by Grant No. 2022034 from the United States - Israel Binational Science Foundation  (BSF), Jerusalem, Israel.}, Michael Hochman\thanks{Research supported by ISF research grant 3056/21.}
~and Meng Wu\thanks{Research supported by the Academy of Finland,  project grant No. 318217.}}
\maketitle
\begin{abstract}
For self-similar sets $X,Y\subseteq \mathbb{R}$, we obtain new results towards the affine embeddings conjecture of Feng-Huang-Rao (2014), and the equivalent weak intersections conjecture. We show that the conjecture holds when the defining maps of $X,Y$ have algebraic contraction ratios, 
and also for arbitrary $Y$ when the maps defining $X$ have algebraic- contraction ratios and there are sufficiently many of them relative to the number of maps defining $Y$.
We also show that it holds for a.e. choice of the corresponding contraction ratios, and obtain bounds on the packing dimension of the exceptional parameters.  Key ingredients in our argument include  a new renormalization procedure in the space of embeddings, and Orponen's projection Theorem for Assouad dimension (2021).
\end{abstract}

\section{\label{sec:Introduction}Introduction}

Let $X,Y\subseteq\mathbb{R}$ be non-trivial strongly separated self-similar
sets. That is, 
$$X=\bigcup_{i\in I}\varphi_{i}(X)\;\;,\;\; Y=\bigcup_{j\in J}\psi_{j}(Y)$$
are non-empty compact sets, the unions are disjoint, $I,J$
are finite index sets of size at least two, and $\varphi_{i},\psi_{j}:\mathbb{R}\rightarrow\mathbb{R}$
are non-constant contracting affine maps. Denote the absolute value
of the scaling constant of an affine map $f$  by $\left\Vert f\right\Vert $, and let $\alpha_{i}:=\left\Vert \varphi_{i}\right\Vert $ and $\beta_{j}:=\left\Vert \psi_{j}\right\Vert $. We say that $X$ is homogeneous when all the maps $\varphi_i$ have a common scaling constant. 

Let $\mathcal{A}$ denote the set of non-singular affine maps $\mathbb{R}\to\mathbb{R}$. Consider the set $\mathcal{E}=\mathcal{E}(X,Y)\subseteq\mathcal{A}$ of affine embeddings  of $X$ into $Y$:
\[
\mathcal{E}= \lbrace f\in\mathcal{A}\;:\, f(X)\subseteq Y\rbrace
\]
We are interested in the following conjecture of Feng, Huang, and Rao \cite{FengHuangRui2014}:
\begin{conjecture}
\label{conj:main}If $\log\alpha_{i}\notin\spn_{\mathbb{Q}}\{\log\beta_{j}\}_{j\in J}$
for some $i\in I$, then $\mathcal{E}=\emptyset$.
\end{conjecture}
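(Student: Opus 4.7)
The plan is to argue by contradiction: assume $\mathcal{E}\neq\emptyset$, pick $f\in\mathcal{E}$, and identify each non-singular affine map $g(x)=ax+b$ with its coefficient pair $(a,b)$, so that $\mathcal{E}$ becomes a closed subset of $(\mathbb{R}\setminus\{0\})\times\mathbb{R}$. The key geometric observation is that for every $x\in X$ the linear functional $L_{x}(a,b):=ax+b$ maps $\mathcal{E}$ into $Y$, since $g(x)\in g(X)\subseteq Y$ for each $g\in\mathcal{E}$. Equivalently, every line projection of $\mathcal{E}$ in a direction parametrized by a point of $X$ must land inside $Y$, whose Assouad dimension equals its similarity dimension $s_{Y}<1$ (strong separation forces this). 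All such projections therefore have Assouad dimension strictly less than $1$.

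The engine of the argument is a renormalization on $\mathcal{E}$. For any words $\mathbf{i}\in I^{*}$ and $\mathbf{j}\in J^{*}$ with $g\circ\varphi_{\mathbf{i}}(X)\subseteq\psi_{\mathbf{j}}(Y)$, the conjugation $R_{\mathbf{i},\mathbf{j}}(g):=\psi_{\mathbf{j}}^{-1}\circ g\circ\varphi_{\mathbf{i}}$ is again in $\mathcal{E}$ and scales by $\|g\|\alpha_{\mathbf{i}}/\beta_{\mathbf{j}}$. Strong separation ensures that for every sufficiently long $\mathbf{i}$ there is a unique admissible $\mathbf{j}=\mathbf{j}(\mathbf{i})$ making the renormalization well-defined, with $\beta_{\mathbf{j}(\mathbf{i})}\asymp\|g\|\alpha_{\mathbf{i}}$; iterating $R$ produces an orbit of embeddings whose scalings lie in a bounded window. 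Under the hypothesis $\log\alpha_{i_{0}}\notin\spn_{\mathbb{Q}}\{\log\beta_{j}\}$, a Kronecker--Weyl density argument on the additive group generated by the $\log\alpha_{i}$ and $\log\beta_{j}$ shows that the set of attainable log-scalings is dense in $\mathbb{R}$. Closedness of $\mathcal{E}$ (from compactness of $X$ and closedness of $Y$) then promotes density to inclusion: there is an interval $[c_{0},c_{1}]\subseteq\pi_{a}(\mathcal{E})$, which forces $\dim_{A}\mathcal{E}\ge 1$.

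At this point I invoke Orponen's 2021 Assouad-dimension projection theorem: for every closed $K\subseteq\mathbb{R}^{2}$ and every $\varepsilon>0$, the set of directions $\theta$ with $\dim_{A}\pi_{\theta}K<\min(\dim_{A}K,1)-\varepsilon$ has Hausdorff dimension zero. Applied to $K=\mathcal{E}$, the family $\{L_{x}\}_{x\in X}$ is a continuous injective image of the non-trivial self-similar set $X$, hence has positive Hausdorff dimension and cannot be swallowed by Orponen's exceptional set. Consequently some $x_{0}\in X$ realizes $\dim_{A}L_{x_{0}}(\mathcal{E})\ge 1-\varepsilon$; choosing $\varepsilon<1-s_{Y}$ contradicts $L_{x_{0}}(\mathcal{E})\subseteq Y$ together with $\dim_{A}Y=s_{Y}$.

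The main obstacle I anticipate is Step 2, namely engineering the density of renormalized scalings. The admissible pairs $(\mathbf{i},\mathbf{j})$ are not independent: the word $\mathbf{j}(\mathbf{i})$ is dictated by where the embedding $g$ places $\varphi_{\mathbf{i}}(X)$ inside $Y$, so one cannot freely prescribe $\mathbf{j}$ to tune the scaling. Overcoming this will likely require passing to the closure of the renormalization orbit to obtain a compact $R$-invariant subset of $\mathcal{E}$, and then running an equidistribution or minimality argument there; the rational-independence hypothesis serves precisely to irrationalize the induced translation on the quotient of $\mathbb{R}$ by the subgroup generated by $\{\log\beta_{j}\}$, yielding the required density.
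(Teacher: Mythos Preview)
The statement you are attempting is the paper's central \emph{open} conjecture; the paper proves it only under extra hypotheses (algebraicity of the ratios, a Diophantine robustness condition on $\Lambda$, or for generic parameters). Your outline, if it went through, would settle the conjecture outright, so something must fail---and it fails exactly where you suspect. In Step~2 you assert that ``a Kronecker--Weyl density argument \ldots shows that the set of attainable log-scalings is dense''. This is not valid: since $\mathbf{j}=\mathbf{j}(\mathbf{i})$ is dictated by the embedding and not freely chosen, the sequence of renormalized log-scalings is what the paper calls a $\Lambda$-multi-rotation, a walk $(\theta_n)$ in $\mathbb{R}/\mathbb{Z}$ with $\theta_{n+1}-\theta_n\in\Lambda$ but with the increment at each step forced by the dynamics. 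Kronecker--Weyl governs orbits of a fixed irrational rotation or of a full subgroup action, not such constrained walks. The paper explicitly notes (end of Section~\ref{sec:Introduction}, citing \cite{Hochman2024}) that there are counterexamples: there exist $\Lambda$ with $1\notin\spn_{\mathbb{Q}}\Lambda$ for which some $\Lambda$-multi-rotation has box dimension strictly less than $1$, and even examples where a positive-density subsequence has box dimension zero. So neither density of scalings nor $\dim_A\mathcal{E}\geq 1$ follows from the hypothesis alone, and the closure argument cannot produce an interval in $\pi_a(\mathcal{E})$.

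Your Steps~1 and~3 are essentially the proof of Theorem~\ref{thm:main-half}: from $\mathcal{E}\neq\emptyset$ one extracts a compact $\mathcal{F}\subseteq\mathcal{E}$ whose $a$-projection has upper box dimension at least $\delta(\Lambda)$, and Orponen's theorem together with $\dim X>0$ then forces $\dim Y\geq\delta(\Lambda)$. What your argument effectively assumes is $\delta(\Lambda)=1$, which is precisely the missing (and in general false) ingredient. The paper's stronger Theorem~\ref{thm:main} circumvents this via a robustness hypothesis on multi-rotations (Condition~(\uR)) combined with a CP-distribution renormalization that amplifies Assouad dimension inside $\mathcal{E}$; this is enough for the algebraic and generic cases but still does not yield the full conjecture.
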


The existence of $i\in I$ such that  $\log \alpha_i\notin\spn_{\mathbb{Q}}\{\log\beta_{j}\}$ is a property of the sets, i.e, it is independent of the choice of affine maps defining $X$ and $Y$. This follows from the so-called logarithmic commensurability principle; see \cite[Theorem 1.1]{feng2009structures} and \cite[Theorem 4.9]{ElekesKeletiMathe2010} for the general case. 


Conjecture \ref{conj:main} is closely related to questions	 about the dimension of the intersection $X\cap Y$. One of the central results in this area is Furstenberg's intersections conjecture, which was recently verified by Shmerkin \cite{Shmerkin2019} and Wu \cite{Wu2019}, see also \cite{austin2020new,Yu2021imrov}. Assuming that $X$ and $Y$ are homogeneous with uniform scaling
constants $\alpha,\beta$ respectively, Furstenberg's conjecture asserts  that
$$\frac{\log\beta}{\log\alpha}\notin\mathbb{Q} \;\;\Rightarrow\;\; \dim X\cap Y\leq\max\{0,\dim X+\dim Y-1\}.$$ 
This implies Conjecture \ref{conj:main} when $X,Y$ are homogeneous. Indeed, the algebraic hypotheses coincide in this case, and if $f\in\mathcal{A}$ then
$f(X)$ is self-similar with the same scaling constants 
as $X$, so the inequality above implies that $\dim f(X)\cap Y<\dim f(X)$, hence $f\notin \mathcal{E}$. This shows that $\mathcal{E}=\emptyset$.

The extension of Furstenberg's conjecture to the non-homogeneous case is entirely open.  It might be expected to hold under the natural arithmetical condition, namely when $\spn_\mathbb{Q}\{\log\alpha_i\}\cap\spn_\mathbb{Q}\{\log\beta_j\}=\{0\}$, but it is not know if under this condition we even get any dimension reduction, i.e.\ whether \[
\dim X\cap Y<\min\{\dim X,\dim Y\}.
\]
But it turns out that the last inequality is equivalent to $\mathcal{E}(X,Y)=\mathcal{E}(Y,X)=\emptyset$ \cite{FengHuangRui2014}. Thus,  Conjecture \ref{conj:main} can be viewed as a weak form of the intersections conjecture for non-homogeneous self-similar sets.

At present, in the non-homogeneous case, Conjecture \ref{conj:main} is known in two main cases. First, when $\dim Y$ is small enough, specifically, when $\dim Y<1/(2d+2)$, where $d=\rank_\mathbb{Q}\{\beta_j\}_{j\in J}\cup\{1\}$ or $\dim Y<1/4$ when $|J|=2$  \cite{FengXiong2018,Yu2019}. The second case is when $\dim Y-\dim X$ is small enough in a manner depending on
$\dim Y$ \cite{Algom2018, Algom2020}. It is also known for small values of $\dim Y$ under a combination of Diophantine and Liouvillian assumptions on the scaling constants \cite{Baker2021}.

In this paper we prove several new results towards the Conjecture. 

\begin{thm}\label{thm:algebraic}
Conjecture \ref{conj:main} holds when all the scaling constants $\alpha_{i},\beta_{j}$ are algebraic.
\end{thm}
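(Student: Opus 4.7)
I would argue by contradiction: suppose $f(x)=ax+b\in\mathcal{E}$ and deduce a contradiction from the hypothesis that some $\log\alpha_{i_0}$ is $\mathbb{Q}$-independent from $\{\log\beta_j\}$. The first step is a renormalization procedure in $\mathcal{E}$. Using the strong separation of $Y$, for every sufficiently long cylinder $\mathbf{i}\in I^*$ there is a unique \emph{maximal} cylinder $\mathbf{j}(\mathbf{i})\in J^*$ such that $f(\varphi_\mathbf{i}(X))\subseteq \psi_{\mathbf{j}(\mathbf{i})}(Y)$. Setting $f_\mathbf{i}:=\psi_{\mathbf{j}(\mathbf{i})}^{-1}\circ f\circ\varphi_\mathbf{i}$ produces infinitely many elements of $\mathcal{E}$, and by maximality the scaling $a_\mathbf{i}=a\cdot\alpha_\mathbf{i}/\beta_{\mathbf{j}(\mathbf{i})}$ lies in a fixed compact subset $[c_1,c_2]\subseteq (0,\infty)$. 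I would then work with the closure $\mathcal{F}:=\overline{\{f_\mathbf{i}\}}\subseteq \mathcal{E}$, viewed as a compact subset of $\mathcal{A}\cong \mathbb{R}^*\times\mathbb{R}$.

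The second step is to extract dimensional content from $\mathcal{F}$. Because the cylinders $\mathbf{j}(\mathbf{i})$ are pinned down by $f$ only up to $O(1)$ slack on the logarithmic scale, a Kronecker/Weyl-type argument applied to the hypothesis $\log\alpha_{i_0}\notin\spn_\mathbb{Q}\{\log\beta_j\}$ shows that, by loading $\mathbf{i}$ with copies of $i_0$ and varying its length, the residues of $\log a_\mathbf{i}$ modulo $\spn_\mathbb{Z}\{\log\beta_j\}$ equidistribute. Together with the self-invariance that $\mathcal{F}$ inherits from the renormalization (it is approximated by orbits of the maps $(a,b)\mapsto\psi_j^{-1}\circ(ax+b)\circ\varphi_i$), this should make $\mathcal{F}\subseteq\mathbb{R}^2$ rich enough that Orponen's projection theorem for Assouad dimension applies: the projection $\pi_a:(a,b)\mapsto a$ of $\mathcal{F}$ onto the scaling axis contains a set of strictly positive Assouad dimension in $\mathbb{R}$.

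The final step is to contradict this using algebraicity. Every realized scaling $a_\mathbf{i}$ lies in the fixed number field $L:=\mathbb{Q}(\{\alpha_i,\beta_j\}_{i\in I,\,j\in J})$, and Liouville-type spacing estimates for algebraic numbers of bounded degree force any subset of $L\cap[c_1,c_2]$ that arises in a ``geometric'' way (with controlled growth of heights against the number of points at a given scale) to have Assouad dimension zero, contradicting the conclusion of the previous paragraph. The main obstacle, as I see it, is the middle step: closures of algebraic sequences need not lie in $L$, so the argument cannot merely invoke ``algebraic $\Rightarrow$ discrete''. The technical heart is therefore to run the renormalization at the right quantitative scale, so that the combinatorial/arithmetic equidistribution legitimately activates Orponen's projection theorem while preserving enough algebraic structure on the approximating sequence $\{a_\mathbf{i}\}$ to cash in Liouville's inequality at the end.
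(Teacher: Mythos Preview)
Your renormalization step (Step 1) is essentially right and matches the paper. But the contradiction you aim for in Step 3 cannot work, and the role you assign to Orponen's theorem in Step 2 is off.

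\medskip

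\textbf{Why Step 3 fails.} You want algebraicity to force the set of realized scalings to have Assouad dimension zero. It does not. First, $a_{\mathbf{i}}=a\cdot\alpha_{\mathbf{i}}/\beta_{\mathbf{j}(\mathbf{i})}$ involves the original scaling $a$ of the embedding, which is not known to be algebraic, so the $a_{\mathbf{i}}$ need not lie in $L$. Second, and more fundamentally, even the ratios $\alpha_{\mathbf{i}}/\beta_{\mathbf{j}(\mathbf{i})}\in L$ form a countable set whose closure can have arbitrary Assouad dimension; Liouville-type bounds only separate points of bounded height, while the heights here grow exponentially in $|\mathbf{i}|$. Indeed the Feng--Xiong result you implicitly use in Step 2 says exactly that the multi-rotation orbit (i.e.\ the logs of these scalings modulo $\log\alpha$) has \emph{positive} box dimension whenever $1\notin\spn_{\mathbb{Q}}\Lambda$; this holds regardless of algebraicity. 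So the quantity you want to be zero and the quantity you want to be positive are the same quantity, and it is positive. There is no contradiction along this axis.

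\medskip

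\textbf{What the paper actually does.} Algebraicity enters through Baker's theorem on linear forms in logarithms, not Liouville: one gets $|{-}n\log\alpha+\sum n_j\log\beta_j|\geq N^{-c}$ for all nonzero $(n,n_j)\in[0,N]^{L+1}$. This is the Diophantine Condition (d), which in turn gives Condition (\uR): every positive-density sub-orbit of every $\Lambda$-multi-rotation has positive upper box dimension. The contradiction then comes from the general Theorem~\ref{thm:main}: under (\uR), one builds a CP-distribution supported on constant-scaling subsets of $\mathcal{E}_0$, uses Orponen's theorem with the family of projections $\pi_\sigma(a,b)=a\sigma+b$ for $\sigma\in X$ (a \emph{varying} direction; this is what lets Orponen bite) to transfer Assouad dimension to Hausdorff dimension, and finally shows $\overline{\dim}_B\mathcal{E}_0>\dim_A\mathcal{E}_0$, a contradiction. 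The ``algebraic'' part of the proof is short; the heavy lifting is in the general criterion.

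\medskip

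In short: your instinct that renormalization plus Orponen plus arithmetic should do it is correct, but the arithmetic input must be Baker, the projection must be $\pi_\sigma$ for generic $\sigma\in X$ rather than the fixed $a$-axis, and the contradiction is not ``positive vs.\ zero dimension'' but a self-improving dimension estimate on $\mathcal{E}_0$.
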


Since the hypothesis of Conjecture \ref{conj:main} holds when $\rank_\mathbb{Q}\{\log \alpha_i\}>\rank_\mathbb{Q}\{\log \beta_j\}$,  in this case we should expect that  $\mathcal{E}=\emptyset$. Indeed, in this situation there certainly will exist an $i$ satisfying $\log\alpha_i\notin\spn_\mathbb{Q}\{\beta_j\}$. The previous theorem verifies this when all the $\alpha_i,\beta_j$ are algebraic; the next theorem establishes a close variant assuming algebraicty of only the $\alpha_i$, with no assumptions on the nature of the $\beta_j$.

\begin{thm}\label{thm:nonhom} If the scaling constants $\alpha_i$ are algebraic and $\rank_\mathbb{Q}\{\log \alpha_i\}_{i\in I}>|J|$, then $\mathcal{E}=\emptyset$.
\end{thm}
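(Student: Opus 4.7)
\emph{Setup and renormalization.} Arguing by contradiction, assume $f_{0}\in\mathcal{E}$. Using the rank hypothesis, pick $i_{0},\ldots,i_{d}\in I$ with $d=|J|$ such that $\log\alpha_{i_{0}},\ldots,\log\alpha_{i_{d}}$ are $\mathbb{Q}$-linearly independent algebraic numbers, and replace $X$ by the strongly separated sub-IFS attractor $X'$ generated by $\{\varphi_{i_{k}}\}_{k=0}^{d}$; since $f_{0}(X')\subseteq Y$, we may take $I=\{i_{0},\ldots,i_{d}\}$ from the start. The renormalization then assigns to each word $\mathbf{i}\in I^{*}$ the unique longest word $\mathbf{j}(\mathbf{i})\in J^{*}$ with $f_{0}\varphi_{\mathbf{i}}(X)\subseteq\psi_{\mathbf{j}(\mathbf{i})}(Y)$, available by strong separation of $Y$, and produces $g_{\mathbf{i}}:=\psi_{\mathbf{j}(\mathbf{i})}^{-1}\circ f_{0}\circ\varphi_{\mathbf{i}}\in\mathcal{E}$. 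Maximality of $\mathbf{j}(\mathbf{i})$ keeps the scale $s_{\mathbf{i}}=\|f_{0}\|\alpha_{\mathbf{i}}/\beta_{\mathbf{j}(\mathbf{i})}$ inside a fixed interval $[c_{0},1]$, and $g_{\mathbf{i}}(X)\subseteq Y$ keeps the translate $t_{\mathbf{i}}$ bounded. Identifying an embedding $f$ with the pair $(\|f\|,f(0))\in\mathbb{R}^{2}$, let $G\subseteq\mathbb{R}^{2}$ be the closure of $\{(s_{\mathbf{i}},t_{\mathbf{i}}):\mathbf{i}\in I^{*}\}$; this is a compact subset of $\mathcal{E}$, so $sX+t\subseteq Y$ for every $(s,t)\in G$.

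\emph{Dimensional lower bound on $G$.} The core step is to prove $\dim_{A}G\geq 1$. The log-scales satisfy
\[
\log s_{\mathbf{i}}=\log\|f_{0}\|+\sum_{k=0}^{d}a_{k}(\mathbf{i})\log\alpha_{i_{k}}-\sum_{j\in J}b_{j}(\mathbf{j}(\mathbf{i}))\log\beta_{j},
\]
so we have $d+1$ integer degrees of freedom from the $\alpha$-side against only $d$ from the $\beta$-side. Algebraicity together with $\mathbb{Q}$-linear independence of $\{\log\alpha_{i_{k}}\}$ yield, via Baker's theorem, effective lower bounds on the distance of any nontrivial combination $\sum_{k}a_{k}\log\alpha_{i_{k}}$ from prescribed real targets. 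Coupling this with a Dirichlet/counting argument over cylinder words and the renormalization dynamics, the log-scales $\{\log s_{\mathbf{i}}\}$ densely fill $[\log c_{0},0]$; hence the projection of $G$ onto the scale axis equals $[c_{0},1]$ and, since Lipschitz maps do not increase Assouad dimension, $\dim_{A}G\geq 1$. This quantitative transfer of the single-unit rank excess into a full interval of achievable scales is the main obstacle, and is precisely where algebraicity of the $\alpha_{i}$'s enters in place of any Diophantine control on the $\beta_{j}$'s.

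\emph{Contradiction via Orponen.} For each $x\in\mathbb{R}$ the map $\pi_{x}:(s,t)\mapsto sx+t$ is the orthogonal projection of $\mathbb{R}^{2}$ onto the line through $(x,1)$; when $x\in X$, the containment $sX+t\subseteq Y$ for $(s,t)\in G$ gives $\pi_{x}(G)\subseteq Y$. The assignment sending $x$ to the direction of $\pi_{x}$ is bi-Lipschitz on bounded sets, so it parameterizes a family of directions of Hausdorff dimension $\dim_{H}X>0$. By Orponen's projection theorem for Assouad dimension, the set of $\theta\in S^{1}$ with $\dim_{A}\pi_{\theta}(G)<\min(\dim_{A}G,1)=1$ has Hausdorff dimension $0$, so there is some $x\in X$ for which $\dim_{A}\pi_{x}(G)\geq 1$. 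But $\pi_{x}(G)\subseteq Y$, and strong separation of $Y$ forces $\dim_{A}Y=\dim_{H}Y<1$, a contradiction.
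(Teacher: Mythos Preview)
Your overall architecture---produce a compact family $G\subseteq\mathcal{E}$ by renormalization, then project into $Y$ via the evaluation maps $\pi_x$ and apply Orponen---matches the paper's proof of Theorem~\ref{thm:main-half}. The setup and the final ``contradiction via Orponen'' paragraph are essentially correct (with one caveat below). The problem is entirely in your ``core step.''

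\textbf{The gap.} You assert that the log-scales $\{\log s_{\mathbf{i}}\}$ densely fill $[\log c_0,0]$, citing ``Baker's theorem\ldots coupled with a Dirichlet/counting argument over cylinder words and the renormalization dynamics.'' This is not an argument; it is a restatement of what needs to be proved. Two specific difficulties:
\begin{itemize}
\item Baker's theorem bounds nontrivial integer combinations $\sum n_k\log\alpha_{i_k}$ away from \emph{zero}; it says nothing about approximating an arbitrary real target, which is what you invoke.
\item The $\beta$-side counts $b_j(\mathbf{i})$ are not free parameters. They are dictated by where $f_0\varphi_{\mathbf{i}}(X)$ lands inside $Y$, so a naive ``$(d{+}1)$ parameters against $d$ constraints'' count does not apply. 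You would need to control the symbolic dynamics of the renormalization, and you give no mechanism for this.
\end{itemize}
You yourself flag this step as ``the main obstacle,'' and indeed it is the entire content of the theorem. The rest is the soft argument of Section~\ref{sec:Derivation-of-Theorem-half}.

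\textbf{How the paper handles it.} The paper does \emph{not} attempt to show density of scales or $\dim_A G\geq 1$ directly. Instead it (i)~passes, via Lemma~\ref{lem:sub-IFSs}, to $M$ \emph{homogeneous} sub-IFS's of $X$ with contractions $\alpha'_1,\ldots,\alpha'_M$ whose logarithms are $\mathbb{Q}$-independent and, by Baker, satisfy Condition~(D); (ii)~proves the key Diophantine Lemma~\ref{Lemma nonhom}: if $\{\gamma_i\}$ satisfy~(D) and $M>L$, then for \emph{every} choice of $\lambda_1,\ldots,\lambda_L$ there is some $i$ for which $-\gamma_i,\lambda_1,\ldots,\lambda_L$ satisfy Condition~(d)---the proof is a pigeonhole over the $M$ approximation conditions, using $M>L$ to find a linear relation among the approximating integer vectors and then Baker to rule it out; (iii)~applies Lemma~\ref{lem:d-implies-R} to get Condition~(\uR) and then invokes the full machinery of Theorem~\ref{thm:main} (renormalization, CP-distributions, and Orponen in the amplification argument of Section~\ref{sec:Proof-of-main-result}). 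So the rank excess is exploited through Lemma~\ref{Lemma nonhom}, not through a density claim, and the contradiction runs through Theorem~\ref{thm:main} rather than a single application of Orponen.

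\textbf{Minor point.} The sentence ``since Lipschitz maps do not increase Assouad dimension'' is false in general. What you actually need is $\dim_A G\geq\dim_H G\geq\dim_H\pi(G)=1$, which is correct once the projection is an interval; but the justification you wrote is wrong.
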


For example, a self-similar set that has scaling constants $\frac{1}{3},\frac{1}{4},\frac{1}{5}$ does not affinely embed in any strongly separated self-similar set  generated by two maps, and the intersection with such a set leads to some dimension reduction. We are not aware of any other proof for this elementary statement.

Our methods also give strong bounds on the possible size of the set of exceptional parameters for the conjecture:

\begin{thm}\label{thm:generic}
Fix $I,J$. Then Conjecture \ref{conj:main} holds for Lebesgue-almost all choices of $(\alpha_{i})_{i\in I},(\beta_j)_{j\in J}$ (and any choice of the translations in the maps). Moreover,
\begin{enumerate}
\item For  $\{\beta_j\}_{j\in J}$ fixed, the Conjecture holds for all but a  set of $\{\alpha_i\}_{i\in I}$ of zero packing dimension in $\mathbb{R}^{|I|}$.
\item For $\{\alpha_i\}_{i\in I}$ fixed, the Conjecture holds for all but a subset of at most $|J|-1$ packing dimension of  $\{\beta_j\}_{j\in J}$ in  $\mathbb{R}^{|J|}$. If in addition the $\alpha_i$ are algebraic, then the exceptional set of $\{\beta_j\}_{j\in J}$ has packing dimension at most $k-d$, where $d=\rank_\mathbb{Q}\{\log\alpha_i\}_{i\in I}$.
\end{enumerate}
\end{thm}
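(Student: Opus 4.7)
The plan is to combine a rigidity lemma extracted from the renormalization procedure in the embedding space (developed earlier in the paper) with a parameter-counting argument on the resulting algebraic constraints. For fixed $\beta$, let $\Lambda_\beta:=\spn_\mathbb{Q}\{\log\beta_j\}_{j\in J}\subset\mathbb{R}$, a countable set; the hypothesis of Conjecture \ref{conj:main} fails at $(\alpha,\beta)$ precisely when $\{\log\alpha_i\}\subseteq\Lambda_\beta$. The Lebesgue statement follows from parts (1) and (2) by Fubini (either $\alpha$-sections or $\beta$-sections of the exceptional set are Lebesgue-null), so I focus on the two packing-dimension bounds.

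The central rigidity step is to show: if $\mathcal{E}(X,Y)\neq\emptyset$, then the parameters satisfy nontrivial multiplicative relations $\prod_i\alpha_i^{p_i}=\prod_j\beta_j^{q_j}$ with $(p_i,q_j)\in\mathbb{Z}^{|I|+|J|}\setminus\{0\}$. The mechanism: any $f\in\mathcal{E}(X,Y)$ of scale $s$ yields, for each $i\in I$, a word $w_i\in J^*$ such that $g_i:=\psi_{w_i}^{-1}\circ f\circ\varphi_i\in\mathcal{E}(X,Y)$ has scale $s\alpha_i/\beta_{w_i}$. Iterating along the IFS tree, the set of scales visited is relatively compact modulo the multiplicative group $\langle\beta_j\rangle$ generated by $\{\beta_j\}_{j\in J}$; a pigeonhole argument then closes loops and produces the desired identities.

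For part (1), with $\beta$ fixed, starting the renormalization from each $\varphi_i$ separately and running pigeonhole on the finitely many scale-equivalence classes yields, for every $i\in I$, an identity whose exponent of $\alpha_i$ is nonzero. The resulting $|I|$ independent constraints force each $\log\alpha_i\in\Lambda_\beta$, so $\{\alpha:\mathcal{E}(X,Y)\neq\emptyset\}\subseteq(\exp\Lambda_\beta)^{|I|}$: a countable set, hence packing dimension zero; the exceptional set is contained therein. For part (2), with $\alpha$ fixed, each identity becomes a linear constraint on $(\log\beta_j)_{j\in J}$, cutting a codimension-$1$ analytic subvariety of $\mathbb{R}^{|J|}$; the countable union over integer coefficient tuples has packing dimension at most $|J|-1$. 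For the refined algebraic bound, with $d=\rank_\mathbb{Q}\{\log\alpha_i\}$ and $v_1,\ldots,v_d$ a $\mathbb{Q}$-basis of $\spn_\mathbb{Q}\{\log\alpha_i\}$, the algebraicity of the $\alpha_i$ allows the renormalization to extract $d$ independent identities (one effectively placing each $v_k$ in $\spn_\mathbb{Q}\{\log\beta_j\}$), cutting codimension $d$ and giving packing dimension at most $|J|-d$.

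The main obstacle is the rigidity lemma, and in particular extracting the correct number of independent relations ($|I|$ for part (1) and $d$ for the algebraic refinement of part (2)). This requires careful control of the scales arising under iterated renormalization and a quantitative pigeonhole closure on the space of scales modulo $\langle\beta_j\rangle$, which is the delicate machinery developed in the paper and underlying the proof of Theorem \ref{thm:algebraic}.
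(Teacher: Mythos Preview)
Your proposal rests on a ``rigidity lemma'' asserting that $\mathcal{E}(X,Y)\neq\emptyset$ forces \emph{exact} multiplicative relations $\prod_i\alpha_i^{p_i}=\prod_j\beta_j^{q_j}$, and in particular (for part (1)) that each $\log\alpha_i\in\spn_{\mathbb{Q}}\{\log\beta_j\}$. But that last statement is exactly the contrapositive of Conjecture~\ref{conj:main} itself. Your pigeonhole sketch does not deliver it: compactness of the renormalized scales only tells you that two scales are \emph{close}, i.e.\ that $\bigl|\sum_i p_i\log\alpha_i-\sum_j q_j\log\beta_j\bigr|$ is small for some integer tuple, not that it vanishes. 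Upgrading approximate relations to exact ones is precisely the open problem; the paper explicitly notes (end of Section~\ref{sec:Introduction}) that there are counterexamples to the naive closing lemmas one might hope for. If your rigidity lemma were true as stated, you would have proved the full conjecture and the exceptional set in part (1) would be countable, which is strictly stronger than what the theorem claims.

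The paper's route is entirely different and does not attempt rigidity. It isolates a Diophantine condition (Condition~(d): $|{-}n\gamma+\sum n_j\lambda_j|>N^{-c}$ for infinitely many $N$) which, via Lemma~\ref{lem:d-implies-R} and Theorem~\ref{thm:main}, implies $\mathcal{E}=\emptyset$. The proof of part (1) then shows directly, by covering the complement of Condition~(d) with $O(N^{L+1})$ balls of radius $N^{-c}$ and letting $c\to\infty$, that the set of bad $\gamma$ has packing dimension zero (Lemma~\ref{lem:lambdas-fixed}); Lemma~\ref{lem:sub-IFSs} supplies $|I|$ linearly independent count-vectors so that the preimage in $\alpha$-space still has packing dimension zero. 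Part (2) is the analogous covering estimate in $\lambda$-space. The exceptional sets obtained are not countable and not algebraic subvarieties; they are where a Diophantine inequality fails.
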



All three theorems above are derived from a general result which confirms the Conjecture under a Diophantine-type condition. To state it we shall assume that $X$ is homogeneous, and denote by $\alpha=\alpha_i$ the common scaling constant. This is not a serious restriction since it is well known that the full conjecture follows from this case,  see e.g. \cite[proof of Corollary 7.6]{Shmerkin2019} or Section \ref{sec:remaining_proofs} below. Let
\[
   \Lambda=\Lambda(X,Y):=\{\frac{\log\beta_{j}}{\log\alpha}\mid j\in J\}.
\]
With this setup, the conjecture asserts that $\mathcal{E}=\emptyset$ whenever   $1\notin\spn_\mathbb{Q}\Lambda$. 

Define a $\Lambda$-multi-rotation to be any sequence $(\theta_{i})\subseteq\mathbb{R}/\mathbb{Z}$
such that $\theta_{i+1}-\theta_{i}\in\Lambda$ for all $i$. Also, let $\overline{\dim}_B$ denote the upper box dimension of a set, and let
\[
\delta(\Lambda)=\inf\left\{ \overline{\dim}_{B}\{\theta_{i}\}\mid(\theta_{i})_{i=1}^{\infty}\text{ is\ a \ensuremath{\Lambda}-multi-rotation}\right\}. 
\]
 Feng and Xiong \cite{FengXiong2018} have shown that $\delta(\Lambda)>1/(\rank_\mathbb{Q}\{\log\beta_j\}+1)$, and in particular $\delta(\Lambda)>0$. 

Our main technical Theorem requires  $\Lambda$-multi-rotations that are robust, in a certain sense, under deletion of elements, as long as a positive-density set of elements remains. Denote the density of $U\subseteq\mathbb{N}$ by $$
d(U)=\lim\frac{1}{N}|U\cap\{1,\ldots,N\}|
$$
assuming the limit exists, and define the upper and lower densities, $\overline{D}(U),\underline{d}(E)$, using limsup and liminf respectively. We then define the following ``robustness'' properties of $\Lambda$.

\begin{description}
\item[Condition\ (\uR)]:  If $(\theta_{i})$ is a $\Lambda$-multi-rotation  and  $U\subseteq\mathbb{N}$ satisfies $\underline{d}(I)>0$, then   $\overline{\dim}_{B}\{\theta_{i}\mid i\in U\}>0 $.

\item[Condition\ (\oR)]:  Defined similarly using upper, instead of lower, density. 
\end{description}

Clearly Condition (\oR) implies Condition (\uR).

\begin{thm}\label{thm:main}
Let $X,Y$ be strongly separated self-similar sets with $X$   homogeneous and $\Lambda$ as above. If $\Lambda$ satisfies Condition (\uR) (or the stronger Condition (\oR)), then Conjecture \ref{conj:main} holds.
\end{thm}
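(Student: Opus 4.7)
I argue by contradiction: assume there exists some $f_{0}\in\mathcal{E}(X,Y)$, and aim to combine a renormalization on $\mathcal{E}$ with Orponen's projection theorem for Assouad dimension to contradict the hypothesis $1\notin\spn_{\mathbb{Q}}\Lambda$.

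\emph{Renormalization on $\mathcal{E}$.} For $f\in\mathcal{E}$ and $i\in I$ the composition $f\circ\varphi_{i}\in\mathcal{E}$ sends $X$ into a set of diameter $\asymp\alpha\|f\|$, which by the strong separation of $Y$ is contained in a unique maximal-length cylinder $\psi_{\bar{j}}(Y)$ of comparable diameter. Setting $T_{i}(f):=\psi_{\bar{j}}^{-1}\circ f\circ\varphi_{i}$ produces an embedding whose scaling lies in a fixed bounded window, and iterating the $T_{i}$'s from $f_{0}$ yields a countable family of embeddings at uniformly bounded scale; let $K\subseteq\mathcal{E}$ be its closure, a nonempty compact set forward-invariant under each $T_{i}$.

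\emph{Scales form a multi-rotation.} For an orbit $(f_{n})=(T_{i_{n}}\cdots T_{i_{1}}(f_{0}))$ set $\theta_{n}:=\log\|f_{n}\|/\log\alpha\pmod{1}$. Each $T_{i}$-step shifts $\theta$ by the sum of the elements of $\Lambda$ corresponding to the letters of $\bar{j}$, so after breaking each compound renormalization step into its $\beta_{j}$ micro-steps the reindexed sequence $(\theta_{n})$ is a genuine $\Lambda$-multi-rotation. Positive-lower-density subsequences of $(\theta_n)$ arise naturally from the IFS-like structure on $K$ (e.g.\ by restricting to orbits whose symbolic coding passes infinitely often through a given cylinder), and Condition~(\uR) then forces the scale projection $\pi_{s}(K)\subseteq\mathbb{R}/\mathbb{Z}$ to have positive upper box dimension, and hence positive Assouad dimension.

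\emph{Orponen and arithmetic rigidity.} View $K$ in scale-translation coordinates as a subset of $\mathbb{R}^{2}$. The evaluation maps $\Pi_{x}:f\mapsto f(x)$, $x\in X$, form an affine family of projections whose direction varies with $x$, and $\Pi_{x}(K)\subseteq Y$ for every $x\in X$. Exploiting the approximate self-similarity of $K$ inherited from the renormalization dynamics, I would invoke Orponen's projection theorem for Assouad dimension to deduce that for abundantly many $x\in X$ one has $\dim_{A}\Pi_{x}(K)\geq\min\{1,\dim_{A}K\}>0$, so that $Y$ would carry a parametric family of positive-Assouad-dimension subsets whose scale data is precisely $\pi_{s}(K)$. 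The hypothesis $1\notin\spn_{\mathbb{Q}}\Lambda$ would then be used to rule out any commensurability between these scales and the cylinder scales $\{\beta_{j_{1}}\cdots\beta_{j_{k}}\}$ of $Y$, preventing the $\Pi_{x}(K)$ from fitting inside the strongly separated self-similar set $Y$ and producing the desired contradiction. The main obstacle I anticipate is this last step: the renormalization is non-conformal and expands the translation coordinate, so $K$ is only approximately self-similar, and organising it into a framework where Orponen's theorem applies, and then converting its conclusion into a clean arithmetic contradiction with $1\notin\spn_{\mathbb{Q}}\Lambda$, is the technical heart of the argument.
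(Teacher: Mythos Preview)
Your proposal assembles the right ingredients (renormalization on $\mathcal{E}$, the $\Lambda$-multi-rotation on scales, Orponen's theorem via the evaluation maps $\Pi_x$), but the mechanism of contradiction in your final paragraph is a genuine gap and is not how the paper closes the argument.

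\medskip
\noindent\textbf{Why your last step does not work.} After applying Orponen you obtain $\dim_A \Pi_x(K)\geq\min\{1,\dim_A K\}>0$ for many $x\in X$. But ``$Y$ contains a subset of positive Assouad dimension'' is no contradiction at all --- $Y$ itself is such a set. Your proposed ``arithmetic rigidity'' step, that incommensurability of $\pi_s(K)$ with the cylinder scales $\{\beta_{\jj}\}$ should prevent $\Pi_x(K)\subseteq Y$, is not substantiated: the hypothesis $1\notin\spn_{\mathbb{Q}}\Lambda$ (which in any case is already implied by Condition~(\uR)) imposes no direct obstruction on which subsets $Y$ can contain. Also, your use of Condition~(\uR) in step~2 only extracts that $\pi_s(K)$ has positive box dimension; you never explain which positive-density subsequence you are restricting to, or why the ``robustness'' of the condition (its survival under deletion of a zero-density set) is the point.

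\medskip
\noindent\textbf{What the paper actually does.} The contradiction is a \emph{bootstrap on the maximal Assouad dimension}
\[
\delta=\sup\{\dim_A\mathcal{F}:\mathcal{F}\subseteq\mathcal{E}\text{ compact}\}.
\]
First, Orponen's theorem plus renormalization is used to transfer $\delta^*=\min\{1,\delta\}$ to a compact \emph{constant-scaling} subset of $\mathcal{E}_0$, and then (via CP-distributions) one shows $\delta<1$ and produces an ergodic $\delta$-dimensional CP-distribution $P$ whose measures are supported on constant-scaling subsets of $\mathcal{E}_0$. Now take $\mu$ $P$-generic and $f$ $\mu$-typical. Along the dyadic cells $D_n=\mathcal{D}_n(f)$, the measures $\mu^{D_n}$ equidistribute for $P$; hence on a set $U\subseteq\mathbb{N}$ of density arbitrarily close to $1$ the supports $\supp\mu^{D_n}$ satisfy uniform covering bounds $\cov(\supp\mu^{D_n},1/k)\geq k^{\delta-o(1)}$. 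Renormalizing along a fixed $\ii\in I^{\mathbb{N}}$, the log-scales $\theta_n$ form (after passing to a syndetic subsequence) a syndetic subsequence of a $\Lambda$-multi-rotation. \emph{This} is where Condition~(\oR) enters: since $U$ has positive upper density, $\overline{\dim}_B\{\theta_n:n\in U\}=\varepsilon>0$. A simple fibered covering lemma then gives
\[
\overline{\dim}_B\Bigl(\bigcup_{n\in U}\supp\mu^{D_n,\ii_n}\Bigr)\geq \delta+\varepsilon,
\]
a compact subset of $\mathcal{E}_0$ of Assouad dimension exceeding $\delta$ --- contradicting the definition of $\delta$.

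So the role of Condition~(\uR)/(\oR) is not to feed an arithmetic incompatibility with $Y$, but to guarantee that the scale coordinate contributes positive dimension \emph{even after restricting} to the positive-density set of times where the CP-machinery gives uniform $\delta$-dimensional fibers. That combination (``base $+$ fiber $>\delta$'') is the missing idea in your outline.
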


Theorem \ref{thm:main} implies all of the previous stated results as special cases. 
To derive  Theorem \ref{thm:algebraic}, we first reduce to the case of homogeneous $X$, and then  use $\log\alpha_i\notin\spn_\mathbb{Q}\{\beta_j\}$ and algebraicity of $\alpha_i,\beta_j$ to derive Condition (\uR) from Baker's bounds on the distance between logarithms of algebraic numbers \cite{Baker1972}. We remark that one can similarly
deduce other cases of the Conjecture; for instance, a theorem of W. Schmidt  implies
Condition (\uR) when $\log\alpha_{i},\log\beta_{i}$
are algebraic.  The proof of Theorems  \ref{thm:nonhom} and \ref{thm:generic} is based on the observation that  Condition (\uR) is very much the generic situation. For example, for Theorem \ref{thm:nonhom}, we show that if there are sufficiently many $\alpha_i$, then for any choice of $\beta_j$'s there exists some $\alpha_i$ for which the corresponding set $\Lambda$ satisfies Condition (\uR). 

Our final result is of a different nature, but is also stated in terms of the quantity $\delta(\Lambda)$:

\begin{thm}
\label{thm:main-half} Conjecture \ref{conj:main} holds if   $\dim Y<\delta(\Lambda)$, and in particular it holds whenever $\delta(\Lambda)=1$.
\end{thm}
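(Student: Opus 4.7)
The plan is to argue by contradiction. Suppose \(\mathcal{E}\neq \emptyset\) while \(\dim Y<\delta(\Lambda)\); I will manufacture from any \(f\in\mathcal{E}\) a \(\Lambda\)-multi-rotation whose underlying set in \(\mathbb{R}/\mathbb{Z}\) has upper box dimension at most \(\dim Y\), which directly contradicts the definition of \(\delta(\Lambda)\). The ``in particular'' clause then follows automatically: since \(Y\) is strongly separated one has \(\dim Y<1\), so \(\delta(\Lambda)=1\) already forces \(\dim Y<\delta(\Lambda)\).

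\emph{The multi-rotation from \(f\).} Fix an index \(i_{0}\in I\), let \(\tilde x\in X\) be the fixed point of \(\varphi_{i_{0}}\), and set \(y^{\ast}:=f(\tilde x)\in f(X)\subseteq Y\). Strong separation of \(Y\) yields a unique \(Y\)-coding \(\mathbf{j}_{\infty}=(j^{(m)})_{m\geq 1}\in J^{\mathbb{N}}\) of \(y^{\ast}\), i.e.\ \(\{y^\ast\}=\bigcap_{m\geq 0}\psi_{\mathbf{j}_\infty|_m}(Y)\). I define
\[
\theta_{m}\;:=\;\frac{\log \|f\|-\log \beta_{\mathbf{j}_\infty|_m}}{\log \alpha}\pmod{1},\qquad m\geq 0.
\]
The difference \(\theta_{m+1}-\theta_{m}=-\log\beta_{j^{(m+1)}}/\log\alpha\) lies in \(-\Lambda\); since reversing a sequence swaps \(\Lambda\) with \(-\Lambda\) while preserving the underlying set (and hence its upper box dimension), this yields the desired \(\Lambda\)-multi-rotation.

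\emph{The key dimension bound.} The heart of the proof is to show \(\overline{\dim}_{B}\{\theta_{m}:m\geq 0\}\leq \dim Y\). To each \(m\) I attach the renormalised embedding \(g_{m}:=\psi_{\mathbf{j}_\infty|_m}^{-1}\circ f\circ \varphi_{i_{0}}^{n_{m}}\in\mathcal{E}\), where \(n_{m}\) is chosen so that \(\|g_{m}\|=\|f\|\alpha^{n_{m}}/\beta_{\mathbf{j}_\infty|_m}\) lies in a fixed bounded interval \([c_{1},c_{2}]\); such \(n_{m}\) exists because \(f(\varphi_{i_{0}}^{n_{m}}(X))\subseteq \psi_{\mathbf{j}_\infty|_m}(Y)\) for the appropriate choice. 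Then \(\theta_{m}\) is recovered modulo \(1\) from the bounded scale \(\log_{\alpha}\|g_{m}\|\), while the positions \(g_{m}(\tilde x)\in Y\) form a subset of \(Y\) and thus satisfy \(\overline{\dim}_{B}\{g_{m}(\tilde x)\}\leq \dim Y\). A covering argument at each resolution \(\delta\) matches distinct \(\theta_{m}\)-values with distinct \(Y\)-cylinders at an appropriate depth meeting \(f(X)\); the number of such cylinders at scale \(\rho\) is controlled by \(\rho^{-\dim Y-\varepsilon}\) via \(\overline{\dim}_{B}f(X)\leq \dim Y\), which transfers the box-dimension estimate from positions to scales.

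\emph{Conclusion and main obstacle.} Given \(\overline{\dim}_{B}\{\theta_{m}\}\leq \dim Y\), the assumption \(\dim Y<\delta(\Lambda)\) contradicts the definition of \(\delta(\Lambda)\) as the infimum of \(\overline{\dim}_{B}\) over \(\Lambda\)-multi-rotations, completing the proof. The main difficulty is the dimension estimate itself: a \(\Lambda\)-multi-rotation with \(\mathbb{Q}\)-independent increments typically equidistributes in \(\mathbb{R}/\mathbb{Z}\) and has full box dimension \(1\), so the argument must crucially exploit that \(\mathbf{j}_{\infty}\) is not arbitrary but the symbolic coding of a point lying in the thin set \(f(X)\subseteq Y\), converting this combinatorial constraint into the quantitative bound above.
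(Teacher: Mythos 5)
There is a genuine gap at the ``key dimension bound'' step, which is the heart of the matter and is exactly where the paper's new idea enters. You want to show $\overline{\dim}_{B}\{\theta_{m}\}\leq\dim Y$ by a covering argument that ``transfers the box-dimension estimate from positions to scales.'' But positions and scales are two \emph{different} linear projections of the two-dimensional family of renormalized embeddings (one is $\pi_{\tilde x}(g_m)=g_m(\tilde x)\in Y$, the other is the $a$-coordinate $\log_\alpha\|g_m\|$), and there is no Lipschitz or bounded-to-one correspondence between them. The fact that $\pi_{\tilde x}$ is injective on $\{g_m\}$ (via unique coding of $y^*$) does not make its inverse controlled, so bounding $\overline{\dim}_B\{g_m(\tilde x)\}$ by $\dim Y$ says nothing about $\overline{\dim}_B\{\theta_m\}$. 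In fact, making the naive cylinder-count rigorous is precisely what produces the previously known bounds $\dim Y < c\,\delta(\Lambda)$ with $c\leq 1/2$: one has to pay for \emph{both} the scale and the translation when matching embeddings to $Y$-cylinders, giving an extra factor that halves the threshold.

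The paper's proof avoids this loss by working with a \emph{compact two-dimensional} family $\mathcal F\subseteq\mathcal E$ (supplied by Lemma \ref{lem: 1}, whose $a$-projection has box dimension $\geq\delta(\Lambda)$, hence $\dim_A\mathcal F\geq\delta(\Lambda)$) and then invoking Orponen's Assouad-dimension projection theorem (Theorem \ref{Thm Orponen}): for all but a Hausdorff-dimension-zero set of directions $\sigma$, $\dim_A\pi_\sigma(\mathcal F)\geq\min\{1,\dim_A\mathcal F\}$. Since $\dim X>0$, one can pick such a $\sigma\in X$, and then $\pi_\sigma(\mathcal F)\subseteq Y$ together with $\dim_A Y=\dim Y$ (Lemma \ref{lem: 2}) gives $\dim Y\geq\delta(\Lambda)$. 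Your argument instead fixes a single, specific direction $\sigma=\tilde x$ (the fixed point of $\varphi_{i_0}$), for which Orponen's theorem gives no conclusion, and then attempts to run the projection argument in reverse. This is the missing ingredient: without the Marstrand-type projection theorem for Assouad dimension (applied to a genuinely $2$-dimensional set and a \emph{generic} direction, not a fixed one), the transfer step does not go through.

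Two smaller points worth noting: (i) your increments lie in $-\Lambda$ rather than $\Lambda$, which you acknowledge and is indeed harmless since negating the sequence preserves the underlying set and its box dimension; (ii) the ``in particular'' clause (that $\dim Y<1$ for a strongly separated $Y$) is correct and matches the paper.
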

This improves earlier results in which the condition took the form $\dim Y<c\delta(\Lambda)$ for a constant $c\leq1/2$, depending on the context. See e.g. \cite{Baker2021, FengHuangRui2014, FengXiong2018}. For  example, Feng and Xiong show in \cite{FengXiong2018}, that when $|J|=2$, the hypothesis of the conjecture implies that $\delta(\Lambda)\geq 1/2$, and they deduced that the conjecture holds whenever $\dim Y<\frac{1}{4}$. By Theorem \ref{thm:main-half}, we can improve this to $\dim Y<\frac{1}{2}$.

The dimension properties of multi-rotations expressed by the last two theorems hint at the possibility that stronger results on multi-rotations might resolve the conjecture in full: It would follow if, 
assuming $1 \notin\spn_\mathbb{Q}\Lambda$, every $\Lambda$-multi-rotation
had full box dimension (as has been suggested by Yu \cite{Yu2019}),
or if they always had positive box dimension when restricted to positive-density
subsequences (indeed,  Feng-Xiong's lower bounds on the box dimension of complete multi-orbits \cite{FengXiong2018} imply this when a zero-density set is deleted). Unfortunately, it turns out that there are counterexamples to each of these statements; see \cite{Hochman2024}. It is still conceivable that every $\Lambda$ must satisfy one of these two properties, which would imply the conjecture. But we have no evidence that such a statement should hold.

Finally let us say a few words about the proof of Theorem \ref{thm:main}, our main technical result. We begin with the by-now standard observation that if $\mathcal{E}$ contains one element, then the symmetries of $X,Y$ allow us to produce an infinite sequence of embeddings whose scaling constants form a $\Lambda$-multi-rotation (see e.g. \cite{FengXiong2018}). 
There are two new ingredients in the proof of Theorem \ref{thm:main}. The main one is a non-conformal renormalization argument in the space $\mathcal{E}$, which ``magnifies'' small parts of $\mathcal{E}$ to ``macroscopic size'' without leaving $\mathcal{E}$. This allows us to bring the machinery of CP-distributions into play.  At the same time, the non-conformality of the renormalization procedure means that asymptotically it approximates a linear projection. This opens the door for us to apply Marstrand-type projection theorems, and, specifically, Orponen's
projection theorem for Assouad dimension \cite{Orponen2021}. This, in combination with Condition (\oR), allow us to ``amplify'' the size of $\mathcal{E}$ until its
dimension reaches $1$, and also  shows that $\min \lbrace1, \dim_{B}\mathcal{E}\rbrace \leq\dim Y$.
These two statements are contradictory, showing that $\mathcal{E}=\emptyset$. We note that in the proof of Theorem \ref{thm:main-half}, the application of Orponen's theorem is more direct and constitutes the main innovation.

The paper is organized as follows. In the next section we review our
assumptions and notation and present some background results, including
on Assouad dimension and CP-distributions. In Section \ref{sec:Derivation-of-Theorem-half}
we prove Theorem \ref{thm:main-half}. In Section \ref{sec:Renormalization}
we present and analyze the renormalization procedure on $\mathcal{E}$,
followed in Section \ref{sec:Proof-of-main-result} by the proof of
Theorem \ref{thm:main}. Finally in Section \ref{sec:remaining_proofs} we provide the remaining proofs.

\section{\label{sec:Notation,-assumptions-and-backgroun} Preliminaries}

\subsection{\label{SSSs-and-assumptions}Self-similar sets, and simplifying assumptions}

Let $\Phi=\{\varphi_{i}\}_{i\in I}$ and $\Psi=\{\psi_{j}\}_{j}$
be finite families of affine non-singular contractions of $\mathbb{R}$, given by $\varphi_{i}(x)=\alpha_{i}x+\sigma_{i}$
and $\psi_{j}(x)=\beta_{j}x+\tau_{j}$. Let $X=\bigcup_{i\in I}\varphi_{i}(X)$ and $Y=\bigcup_{j\in J}\psi_{j}(Y)$ be the corresponding self-similar sets. We assume that they are strongly separated and non-trivial, i.e. neither the $\varphi_i$ or the $\psi_j$ have a common fixed point. 

For $\ii=i_{1}\ldots i_{n}\in I^{n}$ and $\jj=j_{1}\ldots j_{n}\in J^{n}$
we denote the compositions of $\varphi_{i_{1}},\ldots,\varphi_{i_{n}}$
and $\psi_{j_{1}},\ldots,\psi_{j_{n}}$, and their coefficients, by
\begin{align*}
\varphi_{\ii}(x) & =\varphi_{i_{1}}\circ\ldots\circ\varphi_{j_{n}}(x)=\alpha_{\ii}x+\sigma_{\ii},\\
\psi_{\jj}(x) & =\psi_{j_{1}}\circ\ldots\circ\psi_{j_{n}}(x)=\beta_{\jj}x+\tau_{\jj}.
\end{align*}
When $X$ is homogeneous with contraction $\alpha$,  we have $\alpha_{\ii}=\alpha^{n}$ when $\ii\in I^{n}$. 

The strong separation property of $Y$ implies  the following standard Lemma.
\begin{lem}
\label{lem:engulfing-cylinders} There exists $\rho>1$ such that for
every set $Z\subseteq Y$, there exists $\jj\in J^{*}$ (which might
be the empty word) such that 
$$Z\subseteq\psi_{\jj}(Y), \text{ and }\diam\psi_{\jj}(Y)<\rho\diam Z.$$
\end{lem}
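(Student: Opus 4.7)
The plan is to exploit the strong separation of $Y$ to define a uniform lower bound on the separation of children of any cylinder, and then to descend the cylinder tree of $Y$ until we find a cylinder that $Z$ barely fits into. Set $D=\diam Y$ and
\[
\delta = \min_{j\neq j' \in J} \mathrm{dist}\bigl(\psi_j(Y),\psi_{j'}(Y)\bigr),
\]
which is strictly positive by strong separation. By self-similarity, for any word $\jj\in J^*$, the children $\{\psi_{\jj j}(Y)\}_{j\in J}$ of $\psi_\jj(Y)$ have pairwise separation at least $|\beta_\jj|\delta$, while $\diam\psi_\jj(Y)=|\beta_\jj|D$. I will show that $\rho = 2D/\delta$ works (any $\rho>D/\delta$ suffices).

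First I may assume $Z$ has more than one point, since the inequality $\diam\psi_\jj(Y)<\rho\diam Z$ is trivially unachievable otherwise and we should interpret the statement for such $Z$. Consider the set of words $\jj\in J^*$ (including the empty word, for which $\psi_\jj(Y)=Y$) with $Z\subseteq\psi_\jj(Y)$. This set is nonempty and, since $\diam\psi_\jj(Y)=|\beta_\jj|D\to 0$ as $|\jj|\to\infty$ while $\diam Z>0$, contains only finitely many words. Let $\jj$ be one of maximal length.

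By maximality, $Z$ is not contained in any single child $\psi_{\jj j}(Y)$, so it meets at least two distinct children $\psi_{\jj j_1}(Y)$ and $\psi_{\jj j_2}(Y)$. Picking one point of $Z$ from each gives
\[
\diam Z \;\geq\; \mathrm{dist}\bigl(\psi_{\jj j_1}(Y),\psi_{\jj j_2}(Y)\bigr) \;\geq\; |\beta_\jj|\,\delta,
\]
while $\diam\psi_\jj(Y)=|\beta_\jj|D$. Dividing yields $\diam\psi_\jj(Y)\leq (D/\delta)\diam Z<\rho\diam Z$, as required.

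There is no real obstacle here; the only step needing a little care is ensuring that the set of words $\jj$ with $Z\subseteq\psi_\jj(Y)$ is finite so that a maximal one exists, which is immediate from $\diam\psi_\jj(Y)\to 0$ once one observes that the hypothesis $\diam Z>0$ is essential (and implicit in the formulation of the lemma, since the conclusion is otherwise vacuous).
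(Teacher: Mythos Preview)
The paper does not actually prove this lemma; it simply calls it a ``standard Lemma'' implied by strong separation and moves on. Your argument is the natural one and is correct: descend the cylinder tree to a word $\jj$ of maximal length with $Z\subseteq\psi_{\jj}(Y)$, then use that $Z$ must meet two distinct children to bound $\diam Z$ from below by $\beta_{\jj}\delta$. Your observation that the statement tacitly assumes $\diam Z>0$ is accurate---the strict inequality $\diam\psi_{\jj}(Y)<\rho\diam Z$ is impossible for singletons---and indeed in the paper's only use of the lemma (step (R3) of the renormalization in Section~\ref{subsec:Renormalization-in-E}) the set $Z_{D,\ii}$ always has positive diameter.
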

Throughout this section and Sections \ref{sec:Derivation-of-Theorem-half}, \ref{sec:Renormalization} and \ref{sec:Proof-of-main-result}, we make the following simplifying assumptions,
without loss of generality.
\begin{enumerate}

\item All $\alpha_{i}$ have a common value $\alpha$. 
\item $|\alpha|<\frac{1}{2}$.

This can be achieved by replacing $\{\varphi_{i}\}_{i\in I}$ by $\{\varphi_{\ii}\}_{\ii\in I^{k}}$
for a sufficiently large $k$, noting that all other properties of the maps are preserved. 

\item $0\in X$ and $\diam X=\diam Y=1$. 

This can be achieved by means of a change of coordinates on $\mathbb{R}$.

It follows that $\varphi_{i}(0)\in X$ for all $i\in I$, and consequently also
$\sigma_{\ii}=\varphi_{\ii}(0)\in X$ for all $\ii\in I^{*}$.

Also, if $f \in \mathcal{E}$ then $\left\Vert f\right\Vert \leq1$. 

\item All the maps $\varphi_i,\psi_j$ are orientation preserving.

This assumption is only in order to avoid keeping track of the signs. The same proofs work in the general case if the signs are re-inserted.

Note that this implies $\alpha_i=\Vert\varphi_i\Vert$ and $\beta_j=\Vert \phi_j\Vert$.

\end{enumerate}

\subsection{\label{subsec:Assouad-dimension and Orponen's Theorem}Assouad dimension}
Let $E\subseteq\mathbb{R}^{d}$ be a bounded set. We will work with its
 Hausdorff dimension $\dim E$ and its upper box dimension
$\overline{\dim}_{B}E$.  We will also make use of its Assouad dimension $\dim_{A}E$, which is defined as follows.
Let $\cov(E,\delta)$ denote the minimal number of $\delta$-balls
needed to cover $E$. The Assouad dimension of $E\subseteq\mathbb{R}^{d}$
is
\[
\dim_{A}E=\limsup_{r\rightarrow0}\sup_{R>0}\sup_{x\in E}\frac{\log\cov(B_{R}(x)\cap E,R\cdot r)}{\log(1/r)}
\]
In general, $\overline{\dim}_{B}E\leq\dim_{A}E$, and the inequality
can be strict. However, for strongly separated self-similar sets it is well known that $\dim E=\dim_B E=\dim_A E$.

The following strong Marstrand-type theorem of
Orponen \cite{Orponen2021} for Assouad dimension will play an important role in our analysis. For $\sigma\in\mathbb{R}$ let $\pi_{\sigma}:\mathbb{R}^{2}\rightarrow\mathbb{R}$
be the linear map $\pi_{\sigma}(u,v)=\sigma u+v$.
\begin{thm} \label{Thm Orponen}
Let $E\subseteq\mathbb{R}^{2}$. Then 
\[
\dim \{\sigma\in\mathbb{R}\mid\dim_{A}\pi_{\sigma}E<\min\{1,\dim_{A}E\}\}=0.
\]
\end{thm}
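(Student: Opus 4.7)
The plan is to combine the weak-tangent characterization of Assouad dimension with a Marstrand-type projection bound applied to suitably chosen tangents. By standard results (Mackay--Tyson, refined by K\"{a}enm\"{a}ki--Ojala--Rossi), for any compact $E\subseteq \mathbb{R}^2$ and any $s<\dim_A E$ there exists a \emph{weak tangent} $T\subseteq B_1(0)$ of $E$ with $\dim T\geq s$, where $T$ arises as a Hausdorff limit of rescaled pieces $(E\cap B_{r_k}(x_k)-x_k)/r_k$. A key compatibility is that $\pi_\sigma T$ is then a weak tangent of $\pi_\sigma E$ along the same sequence of scales, so $\dim \pi_\sigma T\leq \dim_A \pi_\sigma E$.

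Now let $E_0:=\{\sigma\in\mathbb{R}:\dim_A\pi_\sigma E<\min\{1,\dim_A E\}\}$. For any $\sigma\in E_0$, by choosing a weak tangent $T_\sigma$ of $E$ with $\dim T_\sigma$ close enough to $\dim_A E$, we obtain $\dim \pi_\sigma T_\sigma<\min\{1,\dim T_\sigma\}$. Thus every exceptional $\sigma$ is an exceptional direction, in the classical Marstrand sense, for the projection of some weak tangent of $E$. This reduction alone would give the desired conclusion \emph{if} the tangent could be chosen independently of $\sigma$, since the classical Marstrand theorem for $T$ forces the set of such $\sigma$ to have Lebesgue measure zero (and in fact Hausdorff dimension zero via Kaufman's projection theorem, when combined with Falconer's slicing).

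To upgrade the pointwise fact above to $\dim E_0=0$, I would argue by contradiction. If $\dim E_0>0$, a Frostman/pigeonhole argument isolates a positive-dimensional $\Sigma\subseteq E_0$ on which the projection deficit $\min\{1,\dim_A E\}-\dim_A\pi_\sigma E$ and the tangent dimensions $\dim T_\sigma$ are uniformly controlled, and on which the rescalings are taken along a common scale pattern $r_k\to 0$. Using Hausdorff compactness of closed subsets of $B_1(0)$, one would then hope to pass to a single limit tangent $T$ with $\dim\pi_\sigma T<\min\{1,\dim T\}$ simultaneously for $\sigma$ in a positive-dimensional subset of $\Sigma$, contradicting the classical Marstrand/Kaufman projection theorem applied to $T$.

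The principal obstacle is precisely this uniformization step: soft Hausdorff compactness does not transfer the projection deficit of $\pi_\sigma T_\sigma$ to that of $\pi_\sigma T$ for a single limit $T$, since Hausdorff convergence can decrease dimension and since different $\sigma$'s may a priori require genuinely different scales $x_k,r_k$. Following Orponen, I would replace the soft argument by a discretized multi-scale analysis: working at dyadic scales, estimating $\cov(\pi_\sigma(E\cap B_R(x)),\delta)$ for pairs $\delta\ll R$ carefully tuned to witness the Assouad deficit, and applying a Bourgain--Peres--Schlag type quantitative exceptional-set inequality (for Hausdorff projections of $\delta$-discretized sets) at each scale. Summing the scale-by-scale bounds, together with a diagonalization over $\sigma\in\Sigma$, converts the ``exceptional set has measure zero'' statement for Hausdorff projections of single sets into the ``exceptional set has Hausdorff dimension zero'' statement for Assouad projections of $E$.
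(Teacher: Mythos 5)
This theorem is not proved in the paper at all: it is Orponen's projection theorem for Assouad dimension, imported as a black box from \cite{Orponen2021}, so there is no in-paper argument for you to match. Your first two paragraphs correctly identify the standard reduction: by the weak-tangent characterization of Assouad dimension, for any $s<\dim_A E$ there is a weak tangent $T$ with $\dim T\geq s$; projections commute with the Hausdorff-limit construction, so $\pi_\sigma T$ is a weak tangent of $\pi_\sigma E$ and hence $\dim \pi_\sigma T\leq\dim_A\pi_\sigma E$; therefore every exceptional $\sigma$ is Marstrand/Kaufman-exceptional for the projection of \emph{some} weak tangent of $E$. You also correctly locate the obstruction: the tangent may depend on $\sigma$, and a soft Hausdorff-limit argument does not transport the projection deficit from $T_\sigma$ to a single limit $T$.

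The proposal does not, however, actually close that gap, and the tools you gesture at in the last paragraph do not clearly do so. Quantitative exceptional-set bounds of Kaufman or Bourgain--Peres--Schlag type control, at a fixed scale pair $(\delta,R)$, the set of directions for which the $\delta$-covering number of a projection of a single set drops; Assouad dimension requires a supremum over \emph{all} scale pairs and \emph{all} base points, so a scale-by-scale estimate followed by ``diagonalization over $\sigma$'' does not by itself upgrade a Lebesgue-null exceptional set per scale to a Hausdorff-dimension-zero Assouad-exceptional set: the scales and base points witnessing the deficit can vary arbitrarily with $\sigma$, which is exactly the uniformization problem you already flagged. What is needed — and what constitutes the actual content of Orponen's proof — is a specific extraction argument that, from the assumption $\dim E_0>0$, produces a single weak tangent $T$ carrying the deficit simultaneously for a positive-dimensional set of directions, by pigeonholing a common scale sequence and passing to a Hausdorff limit in a way that does not lose the covering-number lower bound on $T$ or the upper bound on its projections. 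Until that step is executed, the writeup is an outline indicating where the difficulty lies rather than a proof.
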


\subsection{\label{subsec:Magnification-and-CP-chains}Magnification and CP-distributions}

For $0\leq n\in\mathbb{Z}$, the level-$n$ dyadic partition of $\mathbb{R}$
is the partition
$$\mathcal{D}_{n}=\{[\frac{k}{2^{n}},\frac{k+1}{2^{n}})\mid k\in\mathbb{Z}\}.$$
The level-$n$ dyadic partition of $\mathbb{R}^{d}$ is the product
partition $\mathcal{D}_{n}^{d}=\{I_{1}\times\ldots\times I_{d}\mid I_{i}\in\mathcal{D}_{n}\}$.
We suppress the dimension and abbreviate $\mathcal{D}_{n}$ for all
$d$; the underlying dimension will be clear from context. For $x\in\mathbb{R}^{d}$ we write $\mathcal{D}_{n}(x)$ for the unique
element of $\mathcal{D}_{n}$ containing $x$.

The notion of a CP--distribution was introduced by Furstenberg in order to
study the structure of sets and measures in terms of what one sees
when one ``zooms in'' on them along ever smaller dyadic cells \cite{Furstenberg1970,Furstenberg2008}.
We summarize the key definitions and results that we shall use from
this theory, specializing to the case $d=2$, which is most relevant to us.

Let $D\in\mathcal{D}_{n}$ be a dyadic cell. We can magnify  $D$
homothetically to a set of ``macroscopic size'' using the orientation-preserving
homothety mapping $D$ onto $[0,1)^{2}$. We denote this map by 
\[
H_{D}:\mathbb{R}^{2}\rightarrow\mathbb{R}^{2}.
\]
For a probability measure $\mu$ on $\mathbb{R}^{d}$ and $D\in\mathcal{D}_{n}$
with $\mu(D)>0$, denote the conditional measure of $\mu$ on $D$
by 
\[
\mu_{D}=\frac{1}{\mu(D)}\mu|_{D},
\]
and define the magnified version of $\mu_{D}$ by 
\[
\mu^{D}=H_{D}\mu_{D}.
\]

\begin{defn}
\label{def:A-CP-distribution-is}A CP-distribution on $\mathbb{R}^{d}$
is a probability measure $P$ on the space $\mathcal{P}([0,1)^{d})$
of probability measures on $[0,1)^{d}$, which, for every $n\in\mathbb{N}$
(equivalently, for $n=1$) is stationary for the Markov transition
\[
\mu\mapsto\mu^{D}\qquad\text{with $D\in \mathcal{D}_{n}$ chosen with probability \ensuremath{\mu(D)}}.
\]
We say that  $P$ is ergodic if the corresponding stationary Markov chain is ergodic. 
\end{defn}
Recall that a measure $\mu \in \mathcal{P}(\mathbb{R}^d)$ is exact dimensional with dimension $\delta$ if
\[
\mu(B_{r}(x))=r^{\delta+o_{\mu,x}(1)}\qquad\text{for \ensuremath{\mu}-a.e. \ensuremath{x}}.
\]
If this holds then $\dim\supp\mu\geq\delta$. The following Theorem is proved in various forms in the works of Furstenberg \cite{Furstenberg2008}, Hochman \cite{hochman2010dynamics}, and Hochman-Shmerkin \cite{hochman2009local}.
\begin{thm}
\label{thm:dim-of-CP}If $P$ is an ergodic CP-distribution on $\mathbb{R}^{d}$,
then there exists a $\delta>0$ such that $P$-a.e. $\mu$ is exact
dimensional with dimension $\delta$. Moreover, if $\delta=d$ then
$P$-a.e. $\mu$ is equal to Lebesgue measure on $[0,1)^{d}$.
\end{thm}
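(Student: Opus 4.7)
The plan is to encode the CP-stationarity as a genuine measure-preserving dynamical system by enlarging the state space. I would work on the space of \emph{pointed measures} $\widehat{\Omega}=\mathcal{P}([0,1)^d)\times [0,1)^d$ equipped with the probability measure $d\widehat{P}(\mu,x) = dP(\mu)\,d\mu(x)$ and the transformation
\[
T(\mu,x) = \bigl(\mu^{\mathcal{D}_1(x)},\, H_{\mathcal{D}_1(x)}(x)\bigr),
\]
defined on the full-measure set where $\mu(\mathcal{D}_1(x))>0$. The CP-stationarity of $P$ is exactly $T$-invariance of $\widehat{P}$, and ergodicity of the Markov chain $P$ implies ergodicity of $(\widehat{\Omega},\widehat{P},T)$.

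I would then apply Birkhoff's ergodic theorem to the observable $\phi(\mu,x) = -\log_2 \mu(\mathcal{D}_1(x))$. The nested structure of dyadic cells yields the telescoping identity
\[
\sum_{k=0}^{n-1} \phi(T^k(\mu,x)) = -\log_2 \mu(\mathcal{D}_n(x)),
\]
since at each step the level-$1$ cell of the rescaled measure corresponds to the next dyadic refinement of the cell containing $x$ under $\mu$. By the ergodic theorem, for $\widehat{P}$-a.e.\ $(\mu,x)$ this Cesàro average converges to the constant
\[
\delta \;:=\; \int \phi \, d\widehat{P} \;=\; \frac{1}{\log 2}\int \mathcal{H}(\mu)\, dP(\mu),
\]
where $\mathcal{H}(\mu) := -\sum_{D\in \mathcal{D}_1}\mu(D)\log\mu(D)$. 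Hence $\mu(\mathcal{D}_n(x)) = 2^{-n(\delta + o(1))}$. Since dyadic cells of side $2^{-n}$ are commensurate with Euclidean balls of radius $2^{-n}$ up to dimension-dependent constants, this gives $\mu(B_r(x)) = r^{\delta + o(1)}$, and Fubini produces exact dimensionality of $\mu$ for $P$-a.e.\ $\mu$.

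For the second assertion, observe $\mathcal{H}(\mu) \le d\log 2$ with equality iff $\mu$ assigns mass $2^{-d}$ to every cell in $\mathcal{D}_1$. Thus $\delta = d$ forces $\mu$ to be level-$1$ uniform $P$-a.s. By CP-stationarity, the pushforward of $P$ under the Markov step $\mu \mapsto \mu^D$ (with $D\in\mathcal{D}_1$ chosen with probability $\mu(D)$) is again $P$, so $\mu^D$ is $P$-a.s.\ level-$1$ uniform too, forcing $\mu$ to assign mass $2^{-2d}$ to every cell of $\mathcal{D}_2$. Iterating, $\mu$ assigns mass $2^{-nd}$ to every cell of $\mathcal{D}_n$, and since dyadic cells generate the Borel $\sigma$-algebra of $[0,1)^d$, $\mu$ must equal Lebesgue measure there.

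The main obstacle I expect is not the analytical core above but the technical task of lifting ergodicity of the Markov chain $P$ on $\mathcal{P}([0,1)^d)$ to ergodicity of the skew product $(\widehat{P},T)$. One typically shows that $T$-invariant measurable sets pull back from $P$-invariant events via the canonical projection $(\mu,x)\mapsto \mu$; this is standard but demands care with the identification. A minor related point is ensuring that cells of zero $\mu$-mass do not cause iterates of $T$ to leave its domain, which is benign since under the Markov dynamics only positive-mass cells are ever visited.
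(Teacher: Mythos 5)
The paper does not prove Theorem~\ref{thm:dim-of-CP}; it cites Furstenberg, Hochman, and Hochman--Shmerkin, so the comparison is to the standard argument in the literature rather than to a proof given in this text. Your skew-product construction, the telescoping identity $\sum_{k=0}^{n-1}\phi(T^k(\mu,x)) = -\log_2 \mu(\mathcal{D}_n(x))$, the entropy formula for $\delta$, and the iterated-stationarity argument for the Lebesgue case are all correct and follow the established route. The ergodicity lift to the skew product, which you flag, is indeed a routine but necessary verification.

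The one place where you assert something that is not actually immediate is the passage from $\mu(\mathcal{D}_n(x)) = 2^{-n(\delta + o(1))}$ to $\mu(B_r(x)) = r^{\delta+o(1)}$. Dyadic cells sit inside balls of comparable radius, which handles one inequality, but a ball $B_{2^{-n}}(x)$ can meet several level-$n$ cells and can carry much more mass than $\mathcal{D}_n(x)$ if $\mu$ concentrates near the boundary of $\mathcal{D}_n(x)$. ``Commensurability'' of cells and balls is therefore not by itself sufficient; one needs an a.e.\ statement. The standard repair is a Markov/Borel--Cantelli estimate: letting $E_n = \{x : \mu(B_{2^{-n}}(x)) > 2^{n\varepsilon}\mu(\mathcal{D}_n(x))\}$, a Fubini computation bounds $\int_D \mu(B_{2^{-n}}(x))\,d\mu(x)$ by $\mu(D)$ times the mass of the $O_d(1)$ cells adjacent to $D$, giving $\mu(E_n) = O_d(2^{-n\varepsilon})$, hence by Borel--Cantelli the inequality $\mu(B_{2^{-n}}(x)) \le 2^{n\varepsilon}\mu(\mathcal{D}_n(x))$ holds eventually for $\mu$-a.e.\ $x$. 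You should either include this or explicitly invoke it as a known lemma. Separately, note that your argument produces $\delta \ge 0$ but does not yield the strict inequality $\delta > 0$ appearing in the theorem statement; this positivity is false for degenerate CP-distributions (e.g.\ those concentrated on Dirac masses) and in the paper is supplied by the context in which the CP-distribution is constructed, not by Theorem~\ref{thm:dim-of-CP} itself.
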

For $P,\delta$ as in the theorem, we then say that $P$ is $\delta$-dimensional.

Next a sequence of dyadic cells $D_{1}\supseteq D_{2}\supseteq\ldots$
is called consecutive if there is some $n$ such that $D_{i}$ is from level
$n+i$. The next Theorem is essentially due to Furstenberg \cite{Furstenberg2008}, although the connection with Assouad dimension was first stated in  \cite{kaenmaki2015weak}. Here and for the remainder of the paper, we assume that a metric has been fixed on the space of measures that is compatible with the weak-* topology.
\begin{thm}
\label{thm:realizing-Assouad-dim-with-CP} Let $E\subseteq\mathbb{R}^{d}$
be a bounded set with $\delta:=\dim_{A}E$. Then there exists a $\delta$-dimensional
ergodic CP-distribution $P$ with the following property. For every
$\varepsilon>0$, there exist arbitrarily large $N\in\mathbb{N}$,
consecutive dyadic cells $D_{1}\supseteq D_{2}\supseteq\ldots\supseteq D_{N}$,
and a probability measure $\nu$ on $E$ with $\nu(D_{N})>0$, such
that, when $k$ is chosen uniformly from $\{1,\ldots,N\}$, the distribution
of the random measure $\nu^{D_{k}}$ is $\varepsilon$-close to $P$.
\end{thm}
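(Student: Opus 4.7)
The plan is to construct $P$ as a weak-$\ast$ subsequential limit of empirical distributions generated by magnifying discrete measures on $E$ that are near-extremal witnesses of the Assouad dimension, and then pass to an ergodic component of the limit. First, for each $n$, I would invoke the definition of $\delta = \dim_A E$ to produce $x_n \in E$ and scales $R_n > r_n > 0$ with $r_n/R_n \downarrow 0$, so that $B_{R_n}(x_n) \cap E$ contains an $R_n r_n$-separated subset $F_n$ of cardinality at least $(1/r_n)^{\delta - \varepsilon_n}$ with $\varepsilon_n \downarrow 0$. Let $\nu_n$ be the normalized counting measure on $F_n$, viewed as a probability measure on $E$, and fix a descending consecutive chain $D_0^{(n)} \supseteq D_1^{(n)} \supseteq \cdots \supseteq D_{N_n}^{(n)}$ of dyadic cells with $D_0^{(n)}$ a dyadic ancestor of $B_{R_n}(x_n)$ and $N_n \asymp \log(1/r_n)$. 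A Furstenberg-type averaging argument shows that the averaged empirical
\[
\widetilde{Q}_n = \frac{1}{N_n}\sum_{k=1}^{N_n} \int \delta_{\nu_n^{\mathcal{D}_{n_0+k}(x)}}\, d\nu_n(x)
\]
(where $n_0$ is the dyadic level of $D_0^{(n)}$) differs from its one-step Markov push by an $O(1/N_n)$ boundary term, so any weak-$\ast$ subsequential limit $P_0$ of $\widetilde{Q}_n$ is a genuine CP-distribution.

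Next I would identify the dimension of $P_0$. The $R_n r_n$-separation of $F_n$ translates into uniform bounds of the form $\nu_n^D(B_r(y)) \lesssim r^{\delta - o(1)}$ for $D \in \mathcal{D}_{n_0+k}$ and $r \geq 2^{-(N_n - k)}$; these pass to the limit and force $P_0$-a.e.\ $\mu$ to have lower local dimension at least $\delta$. The matching upper bound follows because supports of $\nu_n^D$ are scaled-up pieces of $E$ and $E$ has Assouad (hence upper box) dimension $\delta$. Combined with Theorem \ref{thm:dim-of-CP}, this shows $P_0$-a.e.\ $\mu$ is exact dimensional of dimension $\delta$. Taking the ergodic decomposition $P_0 = \int P_\omega\, d\omega$ and using that exact dimension is a.e.\ constant on ergodic components, almost every $P_\omega$ is an ergodic $\delta$-dimensional CP-distribution, and I would fix one such $P_\omega$ and call it $P$.

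The main obstacle is the realization clause for this particular ergodic component $P$, since the construction above only directly realizes the possibly non-ergodic $P_0$. By Fubini, $\widetilde{Q}_n$ equals the $\nu_n$-average of the single-chain empiricals $\frac{1}{N_n}\sum_{k} \delta_{\nu_n^{\mathcal{D}_{n_0+k}(x)}}$ over $x \in \supp \nu_n \subseteq E$, so convergence $\widetilde{Q}_n \to P_0$ yields chains realizing $P_0$ but not yet $P$. To refine to $P$, I would choose an intermediate depth $k_0$ and a starting point $x$ for which $\mu_x := \nu_n^{\mathcal{D}_{n_0+k_0}(x)}$ is approximately $P$-generic in the sense of the Birkhoff theorem for the CP Markov chain; for such $x$, the forward magnifications $\nu_n^{\mathcal{D}_{n_0+k_0+j}(x)}$ are precisely the CP-chain trajectory started at $\mu_x$, so their empirical distribution converges to $P$. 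Setting $\nu = \nu_n$ and $D_k = \mathcal{D}_{n_0+k_0+k}(x)$ for $k=1,\ldots,M$ then meets the required specification with $\nu$ supported on $E$. The technical crux is extracting such an approximately $P$-generic $\mu_x$ from within the finite-$n$ construction: this requires combining the weak convergence $\widetilde{Q}_n \to P_0$ with a quantitative, pointwise Birkhoff-type statement for the CP Markov chain, so as to guarantee that a $\nu_n$-positive mass of starting points $x \in \supp \nu_n$ produces trajectories lying in an $\eta$-neighborhood of $P$ for all sufficiently large $n$.
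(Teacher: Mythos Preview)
The paper does not give its own proof of this theorem: it is quoted as a background result, attributed to Furstenberg \cite{Furstenberg2008} with the Assouad-dimension formulation credited to \cite{kaenmaki2015weak}. So there is no in-paper argument to compare against; your outline is essentially the standard construction from those references (near-extremal separated sets, counting measures, Furstenberg's empirical CP limit, dimension identification via separation for the lower bound and the Assouad bound on $E$ for the upper bound, then ergodic decomposition).

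The one place where your sketch is genuinely incomplete is exactly the point you flag: passing from realization of the possibly non-ergodic limit $P_0$ to realization of a fixed ergodic component $P$. Your proposed fix, namely locating within the finite-stage chain an intermediate measure $\nu_n^{\mathcal{D}_{n_0+k_0}(x)}$ that is ``approximately $P$-generic'' and then running the chain forward from there, does not follow from weak-$\ast$ closeness alone: proximity of $\mu_x$ to a $P$-generic $\mu$ says nothing about the long-time behaviour of the CP chain started at $\mu_x$, since the Markov dynamics is not continuous in the relevant sense. What is actually used in the cited references is a different mechanism: one observes that for $P_0$-a.e.\ $\mu$ and $\mu$-a.e.\ $y$, the \emph{single-path} empiricals $\frac{1}{M}\sum_{j=1}^{M}\delta_{\mu^{\mathcal{D}_j(y)}}$ converge to the ergodic component of $P_0$ containing $\mu$; since $P$ carries positive $P_0$-mass, a definite $\nu_n$-proportion of points $x$ (and a definite proportion of starting depths $k_0$) have \emph{their own} forward empiricals close to $P$ for all large $M\le N_n-k_0$. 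The chain $D_k=\mathcal{D}_{n_0+k_0+k}(x)$ together with $\nu=\nu_n$ then realizes $P$ directly. In other words, the realization for $P$ is obtained not by a continuity/perturbation argument from a nearby $P$-generic measure, but by an application of the ergodic theorem \emph{inside} the already-constructed finite chains, combined with Fubini to extract a good $x$.
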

Finally, the next Theorem is a standard application of the ergodic Theorem.  
\begin{thm}
\label{thm:CP-equidistribution}Let $P$ be an ergodic CP-distribution
and $F\in L^{1}(P)$. Then for $P$-a.e. $\mu$ and $\mu$-a.e. $x$,
 
\[
\frac{1}{N}\sum_{n=1}^{N}F(\mu^{\mathcal{D}_{n}(x)})\rightarrow\int F(\nu)dP(\nu)\qquad\text{as \ensuremath{N\rightarrow\infty}}.
\]
\end{thm}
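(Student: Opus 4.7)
The plan is to recognize the averages $\frac{1}{N}\sum_{n=1}^N F(\mu^{\mathcal{D}_n(x)})$ as Birkhoff sums for an ergodic measure-preserving system, and deduce the conclusion from the pointwise ergodic theorem.

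First, I would encode the CP-structure as a stationary Markov chain. Define the Markov kernel $K$ on $\mathcal{P}([0,1)^d)$ that sends $\mu$ to the law of $\mu^D$, where $D \in \mathcal{D}_1$ is chosen with probability $\mu(D)$. The assumption that $P$ is a CP-distribution is precisely the statement that $P$ is $K$-stationary, and ergodicity of $P$ is, by definition, ergodicity of the associated stationary chain. On the canonical Markov shift $(\Omega, \widetilde{P}, S)$, where $\Omega = \mathcal{P}([0,1)^d)^{\mathbb{N}}$, $S$ is the left shift, and $\widetilde{P}$ is the law of the stationary chain $(\mu_n)_{n \geq 1}$ with marginal $P$ and transitions $K$, the shift $S$ is measure-preserving and $\widetilde{P}$-ergodic.

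Second, I would apply Birkhoff's pointwise ergodic theorem to the observable $\widetilde{F}(\mu_1,\mu_2,\ldots) = F(\mu_1)$. Since the one-dimensional marginal of $\widetilde{P}$ is $P$, we have $\widetilde{F} \in L^1(\widetilde{P})$, and the theorem yields
\[
\frac{1}{N}\sum_{n=1}^N F(\mu_n) \longrightarrow \int F(\nu)\, dP(\nu)
\]
for $\widetilde{P}$-a.e. trajectory $(\mu_n)_{n \geq 1}$.

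Third, I would identify $\widetilde{P}$-trajectories with pairs $(\mu,x)$ as in the statement. A sample $(\mu_n)$ from $\widetilde{P}$ can be produced by drawing $\mu_1 = \mu$ and then, inductively, selecting a cell $D_n$ at level $n$ with probability $\mu(D_n \mid D_{n-1})$ and setting $\mu_n = \mu^{D_n}$. The nested sequence $D_1 \supseteq D_2 \supseteq \ldots$ determines a unique point $x \in \operatorname{supp}\mu$, and the telescoping identity
\[
\mu(D_n) = \mu(D_1)\,\mu^{D_1}(D_2)\,\mu^{D_2}(D_3)\cdots\mu^{D_{n-1}}(D_n)
\]
shows that the conditional law of $x$ given $\mu_1=\mu$ is exactly $\mu$. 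Hence, under the identification $\mu_n = \mu^{\mathcal{D}_n(x)}$, the $\widetilde{P}$-a.e.\ convergence above becomes the desired convergence for $P$-a.e. $\mu$ and $\mu$-a.e.\ $x$.

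The only step that requires care is this last identification of $\widetilde{P}$ with the joint law of $(\mu,x)$ where $\mu \sim P$ and $x \sim \mu$; once that is established, the conclusion is immediate from Birkhoff's theorem. The verification is a routine computation using the defining property $\mu^D = \frac{1}{\mu(D)}\mu|_D$ pushed forward through the homothety $H_D$, so I do not anticipate any genuine obstacle.
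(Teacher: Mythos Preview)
Your proposal is correct and is exactly the standard argument the paper has in mind: the paper gives no proof, stating only that the theorem ``is a standard application of the ergodic Theorem,'' and your reduction to Birkhoff's theorem via the Markov shift (together with the identification of trajectories with pairs $(\mu,x)$) is precisely that application.
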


\section{\label{sec:Derivation-of-Theorem-half}Proof of Theorems \ref{thm:main-half}}

Let $X,Y \subseteq \mathbb{R}$ be non-trivial self-similar sets with the strong separation condition, and satisfying the conditions stated in Section \ref{SSSs-and-assumptions}. Recall the definition of $\mathcal{E}$,  $\Lambda$, and $\delta(\Lambda)$ from the introduction.

Suppose that $\mathcal{E}\neq\emptyset$. We will show that $\dim Y\geq \delta(\Lambda)$. We need two lemmas:
\begin{lem} \label{lem: 1}
If $\mathcal{E}\neq\emptyset$ then there exists a compact subset $\mathcal{F}\subseteq\mathcal{E}$ such that 
$$\overline{\dim}_B \lbrace a: \exists b\in \mathbb{R} \text{ with } (a,b)\in \mathcal{F} \rbrace \geq \delta(\Lambda).$$
\end{lem}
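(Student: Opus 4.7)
My plan is to use an arbitrary $f\in\mathcal{E}$ to build an infinite sequence $g_0,g_1,\ldots\in\mathcal{E}$ whose scalings $\|g_k\|$, after passing to $\theta_k=\log\|g_k\|/\log\alpha\pmod{1}$, form (up to a sign flip) a $\Lambda$-multi-rotation, and then take $\mathcal{F}$ to be the closure of this sequence. The construction is by iterated renormalization: set $g_0=f$, and given $g_k\in\mathcal{E}$, let $n_k\geq 0$ be minimal with $\|g_k\|\alpha^{n_k}<1/\rho$ (where $\rho$ is as in Lemma \ref{lem:engulfing-cylinders}), choose any word $\ii_k\in I^{n_k}$, and apply Lemma \ref{lem:engulfing-cylinders} to $g_k\circ\varphi_{\ii_k}(X)\subseteq Y$. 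Because this set has diameter $<1/\rho$, its minimal engulfing cylinder $\psi_\jj(Y)$ has $|\jj|\geq 1$. Letting $j_{k+1}$ be the first letter of $\jj$, I define
\[
g_{k+1}:=\psi_{j_{k+1}}^{-1}\circ g_k\circ\varphi_{\ii_k};
\]
since $\psi_\jj(Y)\subseteq\psi_{j_{k+1}}(Y)$, this lies in $\mathcal{E}$.

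The minimality of $n_k$ and $\beta_{j_{k+1}}\in[\beta_{\min},\beta_{\max}]$ confine $\|g_{k+1}\|$ to a fixed interval $[c_1,c_2]\subset(0,\infty)$ for $k\geq 1$; the translations are bounded by the diameter of $Y$ via $0\in X$, so $(g_k)$ lives in a bounded subset of $\mathcal{A}\cong\mathbb{R}^2$. Since $\mathcal{E}$ is closed in $\mathcal{A}$ (using that $Y$ is closed and $X$ is compact, so $f\mapsto f(X)$ is Hausdorff-continuous), the closure $\mathcal{F}:=\overline{\{g_k\}}$ is a compact subset of $\mathcal{E}$, which will be the required $\mathcal{F}$.

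The recursion directly yields
\[
\theta_{k+1}-\theta_k\;\equiv\;n_k-\frac{\log\beta_{j_{k+1}}}{\log\alpha}\;\equiv\;-\frac{\log\beta_{j_{k+1}}}{\log\alpha}\pmod{1},
\]
which lies in $-\Lambda$. Thus $(-\theta_k)$ is a $\Lambda$-multi-rotation, so the definition of $\delta(\Lambda)$ gives $\overline{\dim}_B\{\theta_k\}=\overline{\dim}_B\{-\theta_k\}\geq\delta(\Lambda)$. Because $t\mapsto\log t/\log\alpha\pmod{1}$ is locally bi-Lipschitz and at most boundedly-many-to-one on $[c_1,c_2]$, this lower bound transfers to $\overline{\dim}_B\{\|g_k\|\}\geq\delta(\Lambda)$. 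Finally, $\{a:(a,b)\in\mathcal{F}\text{ for some }b\}=\overline{\{\|g_k\|\}}$, and upper box dimension is invariant under taking closures, completing the proof.

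The main obstacle is the bookkeeping needed to keep the scalings $\|g_k\|$ bounded away from both $0$ and $\infty$, which is essential for compactness of $\mathcal{F}$ and for the log-to-angle map to be bi-Lipschitz. I address this by sharpening Simplifying Assumption 2 to $\alpha<1/\rho$, which can be arranged by passing to a sufficiently high iterate of $\Phi$ without changing $\mathcal{E}$, $Y$, or $\Lambda$, so that the renormalization step is always feasible at every stage.
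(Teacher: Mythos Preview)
Your proof is correct and follows the standard Feng--Xiong renormalization approach that the paper references (and which underlies Proposition~\ref{prop:renomalization-at-point-gives-alpha-beta-set}), with the clean device of peeling off exactly \emph{one} symbol $j_{k+1}$ from the engulfing word at each step so that $(-\theta_k)$ is an exact $\Lambda$-multi-rotation rather than merely a syndetic subsequence of one. One small slip: iterating $\Phi$ \emph{does} rescale $\Lambda$ to $\tfrac{1}{k}\Lambda$, but this can only increase $\delta(\cdot)$ (since $(\eta_i)\mapsto(k\eta_i\bmod 1)$ sends $\tfrac{1}{k}\Lambda$-multi-rotations to $\Lambda$-multi-rotations with the same box dimension), and in any case the extra hypothesis $\alpha<1/\rho$ is not actually needed---minimality of $n_k$ already yields $\|g_k\|\geq\min\bigl(\|f\|,\,\alpha/(\rho\beta_{\max})\bigr)$ for all $k$.
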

This is standard. It follows  from e.g.
\cite{FengXiong2018}, or can be recovered from Proposition \ref{prop:renomalization-at-point-gives-alpha-beta-set} below.

\begin{lem} \label{lem: 2}
The strongly separated self-similar set $Y$ satisifies $\dim Y = \dim_A Y$.
\end{lem}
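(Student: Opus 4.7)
This is a standard fact about strongly separated self-similar sets, and the plan is simply to record a proof compatible with the notation already in place. Writing $s=\dim Y$, strong separation gives $s=\overline{\dim}_B Y$, so there exists $C>0$ with $\cov(Y,t)\leq Ct^{-s}$ for every $t\in(0,1]$. The inequality $\dim Y\leq\dim_A Y$ is automatic, so the task reduces to the reverse bound. The key observation I would exploit is that every small neighbourhood of a point of $Y$ can be pulled back, by an inverse similitude of $Y$, onto a subset of $Y$ of bounded diameter, thereby reducing local covering at every scale to the single global box-counting estimate above.

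Concretely, I would fix $x\in Y$, $R>0$, and $r\in(0,1]$, and apply Lemma~\ref{lem:engulfing-cylinders} to $Z:=B_R(x)\cap Y$ to obtain $\mathbf{j}\in J^{*}$ with $Z\subseteq\psi_{\mathbf{j}}(Y)$ and $\beta_{\mathbf{j}}=\diam\psi_{\mathbf{j}}(Y)<\rho\,\diam Z\leq 2\rho R$. Set $t:=Rr/\beta_{\mathbf{j}}$. If $t\geq 1$, then $\psi_{\mathbf{j}}(Y)$ itself has diameter at most $Rr$ and a single ball of radius $Rr$ covers $B_R(x)\cap Y$. Otherwise $t\in(0,1)$, in which case $E:=\psi_{\mathbf{j}}^{-1}(Z)\subseteq Y$; restricting a $Ct^{-s}$-ball cover of $Y$ at scale $t$ to $E$ and pushing it forward by $\psi_{\mathbf{j}}$ produces a cover of $B_R(x)\cap Y$ by at most
\[
Ct^{-s}=C\bigl(\beta_{\mathbf{j}}/R\bigr)^{s}r^{-s}\leq C(2\rho)^{s}r^{-s}
\]
balls of radius $\beta_{\mathbf{j}} t=Rr$.

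Either way, $\cov(B_R(x)\cap Y,\, Rr)\leq C'\,r^{-s}$ with $C'$ independent of $x$ and $R$, so the definition of Assouad dimension immediately yields $\dim_A Y\leq s=\dim Y$. There is no genuine obstacle in this argument; the only bookkeeping point is that the box-counting constant $C$ must be uniform across all scales in $(0,1]$, which is precisely what the definition of upper box dimension supplies.
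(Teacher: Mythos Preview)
Your argument is correct and follows exactly the approach sketched in the paper: use Lemma~\ref{lem:engulfing-cylinders} to find a cylinder $\psi_{\jj}(Y)$ engulfing $B_R(x)\cap Y$, then pull back by $\psi_{\jj}^{-1}$ to reduce the local covering count to a global one for $Y$. One small imprecision: the uniform bound $\cov(Y,t)\leq Ct^{-s}$ is \emph{not} a consequence of $\overline{\dim}_B Y=s$ alone (upper box dimension only gives $\cov(Y,t)\leq C_\varepsilon t^{-s-\varepsilon}$ for each $\varepsilon>0$); it holds here because a strongly separated self-similar set is Ahlfors $s$-regular, or alternatively you can run your argument with an $\varepsilon$-slack and let $\varepsilon\to0$ at the end.
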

This is due to Furstenberg; it follows from the
fact that anything happens in $Y\cap B_{R}(x)$ can be re-scaled to
scale $\Theta(1)$ using Lemma \ref{lem:engulfing-cylinders} and
the maps $\psi_{\jj}^{-1}$. This means that the local covering numbers
used to define $\dim_{A}Y$ occur, with negligible error, also as
covering numbers of $Y$ itself. For a full proof, see \cite{bishop2013fractal}.

Let us now prove Theorem \ref{thm:main-half}. Having assumed that $\mathcal{E}\neq \emptyset$, let $\mathcal{F\subseteq\mathcal{E}}$ be
as in Lemma \ref{lem: 1}.

Fix $\sigma\in X$. Let $\pi_{\sigma}:\mathcal{E}\rightarrow Y$ denote the map taking $f(x)=ax+b$ to $\pi_{\sigma}(f)=f(\sigma)=a\sigma+b$. Note that the image of $\pi_\sigma$ is contained in $Y$  because $f(X)\subseteq Y$.

As a function of $(a,b)\in\mathbb{R}^{2}$ the map $\pi_{\sigma}$
is a linear functional. So, applying Orponen's Theorem \ref{Thm Orponen}, 
$$\dim_{A}\pi_{\sigma}(\mathcal{F})\ge \min\{1,\dim_{A}\mathcal{F}\}$$
for all but at most a $0$ Hausdorff dimensional set of $\sigma \in X$.  Since $X$ is non-trivial  $\dim X>0$,  hence there exists some $\sigma\in X$ with
\[
\dim_{A}\pi_{\sigma}(\mathcal{F})\ge  \min\{1,\dim_{A}\mathcal{F}\} \geq\min\{1,\overline{\dim}_{B}\mathcal{F}\}\geq\delta(\Lambda).
\]
Since $Y\supseteq\pi_{\sigma}(\mathcal{F})$, we conclude that $\dim_{A}Y\geq\delta(\Lambda)$. Applying Lemma \ref{lem: 2} we obtain the desired conclusion $\dim Y\geq\delta(\Lambda)$.

\section{\label{sec:Renormalization} Renormalization  on the space $\mathcal{E}$}

\subsection{The space of affine maps} Let $\mathcal{A}$ be the space of   non-singular affine maps of $\mathbb{R}$; that is,
\[
\mathcal{A}=\{f(x)=a\cdot x+b:\,a\neq 0\}.
\]
We introduce coordinates, identifying $f(x)=a\cdot x+b$ with $(a,b)\in\mathbb{R}^{2}$. Thus,
 $\mathcal{A}$ is identified with $\mathbb{R}^{2}\setminus( \lbrace 0 \rbrace \times\mathbb{R})$. Topologically, $\mathcal{A}$ consists of two copies of $\mathbb{R}^{2}$, each copy corresponding to one orientation. 

This parameterization has several shortcomings, primarily that the
metric induced on $\mathcal{A}$ from $\mathbb{R}^{2}$ is neither
complete nor proper (bounded subsets of $\mathcal{A}$ need not be
pre-compact). However, we shall work exclusively with subsets
of $\mathcal{A}$ that are either compact or pre-compact. In this case
the metric induced from $\mathbb{R}^{2}$ is entirely appropriate.


\subsection{\label{subsec:Dyadic-partition-of-A}Dyadic cells in $\mathcal{A}$}

The identification of $\mathcal{A}$ with a subset of $\mathbb{R}^{2}$ induces partitions of $\mathcal{A}$ from the dyadic partitions
of $\mathbb{R}^{2}$. We denote these also by $\mathcal{D}_{n}$,
the underlying space should be clear from the context.

It is important to note that the cells of these partitions need not
be pre-compact in $\mathcal{A}$. Under our identification, a cell $D$ is pre-compact if and only if its closure is disjoint from the $x$-axis.  We shall restrict the discussion to
pre-compact cells, and use this term only in reference to subsets of $\mathcal{A}$. Note that if $\mathcal{F}\subseteq\mathcal{E}$
is compact (in particular when $\mathcal{F}=\{f\}$), then there exists
an $n_{0}$ so that for $n>n_{0}$ every cell $D\in\mathcal{D}_{n}$
that intersects $\mathcal{F}$ is pre-compact. 

Let $D\in\mathcal{D}_{n}$ and set
\[
\chi_{D}:=\sup_{f\in\mathcal{E}\cap D}\left\Vert f\right\Vert. 
\]
Note that $0<\chi_{D}\leq1$ since $\mathcal{E}$ consists of non-singular contractions. 
\begin{lem}
For any $f,g\in\mathcal{E}\cap D$, $D\in D_n$, 
we have for all $x,y\in X$
\begin{equation} \label{eq:continuity-of-action}
|f(x)-g(y)|\leq  \chi_{D}|x-y|+2\cdot2^{-n}.
\end{equation}
\end{lem}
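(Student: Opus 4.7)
The plan is to simply unpack the definitions of $f,g$ and $D$, and use the elementary observation that a cell of $\mathcal{D}_n$ in the $(a,b)$-parameterization of $\mathcal{A}$ has diameter at most $2^{-n}$ in each coordinate. Writing $f(x)=a_f x+b_f$ and $g(y)=a_g y+b_g$, I would start with the identity
\[
f(x)-g(y)=a_f(x-y)+(a_f-a_g)\,y+(b_f-b_g),
\]
so that by the triangle inequality
\[
|f(x)-g(y)|\le |a_f|\cdot |x-y|+|a_f-a_g|\cdot |y|+|b_f-b_g|.
\]

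Next I would bound each of the three factors. First, since $f\in\mathcal{E}\cap D$ and the maps are orientation preserving (assumption~(4) in Section~\ref{SSSs-and-assumptions}), we have $|a_f|=\Vert f\Vert\le \chi_D$ by the definition of $\chi_D$. Second, because $f,g$ lie in a common cell $D\in\mathcal{D}_n$ of the dyadic partition of $\mathcal{A}\subseteq\mathbb{R}^2$, the first coordinate of any two points of $D$ differ by at most $2^{-n}$, and similarly in the second coordinate; hence $|a_f-a_g|\le 2^{-n}$ and $|b_f-b_g|\le 2^{-n}$. Third, $y\in X$ together with the normalization $0\in X$ and $\diam X=1$ (assumption~(3)) gives $|y|\le 1$. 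Substituting these three bounds yields
\[
|f(x)-g(y)|\le \chi_D|x-y|+2^{-n}\cdot 1+2^{-n}=\chi_D|x-y|+2\cdot 2^{-n},
\]
which is exactly \eqref{eq:continuity-of-action}.

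There is no real obstacle here: the whole argument is a direct consequence of the normalizations collected in Section~\ref{SSSs-and-assumptions} (in particular $\diam X=1$ and $0\in X$) together with the fact that the $(a,b)$-coordinates on $\mathcal{A}$ make each dyadic cell a square of side $2^{-n}$. The only thing worth double-checking is the use of $|y|\le 1$, which would fail if $X$ were not normalized to contain $0$; this is precisely why assumption~(3) is imposed at the start of the section.
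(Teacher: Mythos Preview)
Your proof is correct and follows exactly the same route as the paper: the identical decomposition $f(x)-g(y)=a_f(x-y)+(a_f-a_g)y+(b_f-b_g)$, followed by the bounds $|a_f|\le\chi_D$, $|a_f-a_g|,|b_f-b_g|\le 2^{-n}$, and $|y|\le 1$ from the normalizations in Section~\ref{SSSs-and-assumptions}. The only cosmetic difference is that you invoke assumption~(4) to justify $|a_f|=\Vert f\Vert$, which is not actually needed since $\Vert f\Vert=|a_f|$ by definition regardless of orientation.
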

\begin{proof}
For $f,g\in\mathcal{E}\cap D$, writing $f(x)=ax+b$ and $g(x)=cx+d$,  as  $|a-c|$ and $|b-d|$ are both $<2^{-n}$, we have
\begin{align}
|f(x)-g(y)| & \leq|a(x-y)+(a-c)y+(b-d)|\nonumber \\
 & \leq|a||x-y|+|a-c||y|+|b-d|\nonumber \\
 & \leq\chi_{D}|x-y|+2\cdot2^{-n}
\end{align}
where we have used $|a|\leq\chi_{D}$ and also the fact that $0\in X$
and $\diam X=1$ to conclude $|y|\leq1$.
\end{proof}

\subsection{\label{subsec:Renormalization-in-E}The renormalization maps $M_{D,\ii}$}

Using the identification of $\mathcal{A}$ with a subset of $\mathbb{R}^{2}$
and the associated dyadic partitions, we can define the magnifying
maps $H_{D}$ on $\mathcal{A}$ as in Section \ref{subsec:Magnification-and-CP-chains}. However, this operation does not
generally preserve $\mathcal{E}$.

We introduce now a different re-scaling procedure which we call renormalization.
It is defined only on $\mathcal{E}$ (rather than all of $\mathcal{A})$,
but it preserves $\mathcal{E}$. Renormalization is non-conformal,
which we shall see later is both an advantage and a disadvantage.

Consider a cell $D\in\mathcal{D}_{n}$ such that $D\cap\mathcal{E}\neq\emptyset$.  Suppose furthermore  that $\overline{D}$ is compact, contained entirely in the
space of properly contracting maps so that $\chi_{D}<1$, and 
that $2^{-n}/\chi_{D}\leq1$. Note that for every compact $\mathcal{F}\subseteq\mathcal{E}$,
for every large enough $n$ this holds for every $D\in\mathcal{D}_{n}$
intersecting $\mathcal{F}$.

Define a renormalization map $\mathcal{E}\cap D\rightarrow\mathcal{E}$
as follows. Recall that $\alpha$ is the common scaling constant of the maps $\varphi_i$.
\begin{enumerate}
\item [(R1)] Choose $k\geq0$ to be the unique integer such that 
\begin{equation}
\alpha\frac{2^{-n}}{\chi_{D}}\leq\alpha^{k}<\frac{2^{-n}}{\chi_{D}}\label{eq:choice-of-k}
\end{equation}
When we want to indicate the dependence on $n$ we write $k=k(n)$.
\item [(R2)] Choose $\ii\in I^{k}$ arbitrarily. 

Observe that, because of the assumptions $\diam X=1$ and $\alpha_{i}=\alpha$,
\[
\alpha\frac{2^{-n}}{\chi_{D}}\leq\diam\varphi_{\ii}(X)=\alpha^k<\frac{2^{-n}}{\chi_{D}}
\]
Hence, by (\ref{eq:continuity-of-action})
and the definition of $\chi_{D}$, the set 
\[
Z_{D,\ii}:=\bigcup_{f\in\mathcal{E}\cap D}f(\varphi_{\ii}(X))\subseteq X
\]
has diameter bounded by 
$$\chi_{D}\diam\varphi_{\ii}X+2\cdot2^{-n}\leq3\cdot2^{-n}.$$
\item [(R3)] Let $\rho$ be the constant from Lemma \ref{lem:engulfing-cylinders}.
Choose $\jj\in J^{*}$ of minimal length such that $\left\Vert \psi _{\jj}\right\Vert<\rho\cdot3\cdot2^{-n}$
and $Z_{D,\ii}\subseteq\psi_{\jj}(Y)$. Note for future use that
\begin{equation}
\left\Vert \psi_{\jj}\right\Vert =\Theta(2^{-n}).\label{eq:psi-j-bound}
\end{equation}
By our choice of $\jj$ we have $Z_{D,\ii}\subseteq\psi_{\jj}(Y)$.
So, for every $f\in\mathcal{E}\cap D$, 
\[
\psi_{\ii}^{-1}f\varphi_{\ii}(X)\subset \psi_{\jj}^{-1}Z_{D,\ii}\subseteq Y.
\]
Therefore, $\psi_{\jj}^{-1}f\varphi_{\ii}\in\mathcal{E}$.
\end{enumerate}
We define the renormalization map $M_{D,\ii}:\mathcal{E}\cap D\rightarrow\mathcal{E}$
by
\[
M_{D,\ii}f=\psi_{\jj}^{-1}f\varphi_{\ii} \;\;\;\;\textrm{ for $f\in\mathcal{E}\cap D$}.
\]

Since $X,Y$ are compact, the translation parts of the maps in $\mathcal{E}$
are also automatically bounded. Since the image of $M_{D,\ii}$
is contained in $\mathcal{E}$, it consists of contractions: 
$$\left\Vert M_{D,\ii}(f)\right\Vert \leq 1.$$
Furthermore, for $f\in\mathcal{E}\cap D$, we have $\left\Vert f\right\Vert \geq\chi_{D}/\sqrt{e}$,
$\left\Vert \varphi_{\ii}\right\Vert \geq\alpha\frac{2^{-n}}{\chi_{D}}$
and $\left\Vert \psi_{\jj}\right\Vert <\rho\cdot3\cdot2^{-n}$. This gives a lower bound on the scaling constant,
\[
\left\Vert M_{D,\ii}(f)\right\Vert =\frac{\left\Vert f\right\Vert \left\Vert \varphi_{\ii}\right\Vert }{\left\Vert \psi_{\jj}\right\Vert }>\frac{(\chi_{D}/\sqrt{e})\cdot\alpha2^{-n}/\chi_{D}}{\rho\cdot3\cdot2^{-n}}=\frac{\alpha}{3\rho\sqrt{e}}.
\]
Thus, $M_{D,\ii}$ maps
$D$ into the fixed compact set 
\begin{equation} \label{eq: E zero}
\mathcal{E}_{0}=\{g\in\mathcal{E}\mid\left\Vert g\right\Vert \geq\frac{\alpha}{3\rho\sqrt{e}}\}.
\end{equation}

The renormalization map $M_{D,\ii}$ depends on the choice $\ii\in I^{n}$.
So, potentially there are $|I|^{n}$ different renormalizations for
the same $D\in\mathcal{D}_{n}$. This freedom to choose $\ii$ will
be significant later. On the other hand, an important case occurs when we choose $\ii$ to be a prefix of a fixed infinite word in $I^\mathcal{N}$. This is closely
related to the standard ``amplification of embeddings'' method that leads to Lemma \ref{lem: 1}.

We say that a subsequence $\theta_{n_{i}}$ of a sequence $\theta_{n}$ is syndetic 
if $n_{i+1}-n_{i}$ is bounded. Recall the definition of $\Lambda$ as in Section \ref{sec:Introduction}.
\begin{prop}
\label{prop:renomalization-at-point-gives-alpha-beta-set}Let $f\in\mathcal{E}$
and let $D_{n}=\mathcal{D}_{n}(f)$ denote the dyadic cell of level
$n$ containing $f$. Let $\ii\in I^{\mathbb{N}}$ be any fixed sequence.
Let $k(n)$ denote the integer associated to $D_{n}$ in step (R1),
let $\ii_{n}=\ii|_{[1,k(n)]}$, and let $\jj_{n}\in J^{*}$ denote
the sequence associated to $D_{n},\ii_{n}$ in step (R3). 

Then $\jj_{n+1}$ extends $\jj_{n}$, and the sequence
\[
\theta_{n}=\frac{1}{\log\alpha}\log\left\Vert M_{D_{n},\ii_{n}}(f)\right\Vert 
\]
contains a syndetic subsequence that is itself a syndetic subsequence
of a $\Lambda$-multi-rotation.
\end{prop}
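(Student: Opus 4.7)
The plan is to handle the two claims of the proposition in order. For the nesting claim, the key observation is that $D_{n+1}\subseteq D_n$ and $\ii_{n+1}$ extends $\ii_n$, so the sets $Z_{D_n,\ii_n}$ form a decreasing chain: in particular $Z_{D_{n+1},\ii_{n+1}} \subseteq \psi_{\jj_n}(Y)$, while by (R3) also $Z_{D_{n+1},\ii_{n+1}} \subseteq \psi_{\jj_{n+1}}(Y)$. Thus $\psi_{\jj_n}(Y)\cap\psi_{\jj_{n+1}}(Y)\neq\emptyset$, and strong separation of $Y$ forces one of $\jj_n,\jj_{n+1}$ to extend the other. To rule out that $\jj_n$ strictly extends $\jj_{n+1}$, I would use the minimal-length choice in (R3): writing $\jj_n^-$ for the immediate predecessor of $\jj_n$ in the chain of $J$-cylinders containing $Z_{D_n,\ii_n}$, minimality of $|\jj_n|$ forces $\Vert\psi_{\jj_n^-}\Vert\geq\rho\cdot 3\cdot 2^{-n}$. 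If $\jj_{n+1}$ were a proper prefix of $\jj_n$, then $|\jj_{n+1}|\leq|\jj_n^-|$ would give $\Vert\psi_{\jj_{n+1}}\Vert\geq\Vert\psi_{\jj_n^-}\Vert\geq \rho\cdot 3\cdot 2^{-n}>\rho\cdot 3\cdot 2^{-(n+1)}$, contradicting $\Vert\psi_{\jj_{n+1}}\Vert<\rho\cdot 3\cdot 2^{-(n+1)}$.

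With the nesting established, the $\jj_n$ are prefixes of a unique infinite word $\jj^\infty\in J^{\mathbb{N}}$ (the lengths go to infinity because $\Vert\psi_{\jj_n}\Vert\to 0$). A direct computation from $M_{D_n,\ii_n}(f)=\psi_{\jj_n}^{-1}f\varphi_{\ii_n}$ and $\Vert\varphi_{\ii_n}\Vert=\alpha^{k(n)}$ yields
\[
\theta_n \;=\; \frac{\log\Vert f\Vert}{\log\alpha} + k(n) - \sum_{r=1}^{|\jj_n|} \lambda_r^{\infty},
\]
with $\lambda_r^{\infty}:=\log\beta_{(\jj^\infty)_r}/\log\alpha\in\Lambda$. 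Modulo $\mathbb{Z}$ the integer $k(n)$ disappears, so $\theta_n\equiv\eta_{|\jj_n|}\pmod{\mathbb{Z}}$, where $\eta_m := \log\Vert f\Vert/\log\alpha - \sum_{r=1}^m\lambda_r^{\infty} \pmod{\mathbb{Z}}$. The consecutive differences of $\eta$ lie in $-\Lambda$, and under the natural reflection $x\mapsto -x$ on $\mathbb{R}/\mathbb{Z}$ (which preserves the box-dimensional and set-theoretic content the proposition is later applied for), $\eta$ is identified with a $\Lambda$-multi-rotation.

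To extract the syndetic subsequence, I would show that $|\jj_{n+1}|-|\jj_n|$ is uniformly bounded. The bound $\Vert\psi_{\jj_n}\Vert\geq\beta_{\min}\rho\cdot 3\cdot 2^{-n}$ from the minimality of $\jj_n$, combined with $\Vert\psi_{\jj_{n+1}}\Vert<\rho\cdot 3\cdot 2^{-(n+1)}$, gives $\Vert\psi_{\jj_n}\Vert/\Vert\psi_{\jj_{n+1}}\Vert \leq 2/\beta_{\min}$. Since each of the $|\jj_{n+1}|-|\jj_n|$ new letters contributes a factor $\geq \beta_{\max}^{-1}$ to this ratio, the number of new letters is bounded by $\log(2/\beta_{\min})/\log(1/\beta_{\max})$, an absolute constant. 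Choosing one representative $n_j$ per distinct value of $|\jj_n|$ then produces a syndetic subsequence of $(\theta_n)$ matching a syndetic subsequence of $(\eta_m)$. I expect the main obstacle to be the first claim on the direction of the nesting: strong separation alone only gives comparability of $\jj_n,\jj_{n+1}$, and pinning down which extends which requires the careful interaction between the diameter bound $\rho\cdot 3\cdot 2^{-n}$ and the minimal-length choice in step (R3).
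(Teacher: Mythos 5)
Your proposal follows essentially the same route as the paper: establish the nesting $\jj_n \subseteq \jj_{n+1}$ via strong separation plus the minimal-length choice in (R3), express $\theta_n$ modulo one in terms of $\Lambda$-increments coming from the letters of $\jj^\infty$, and extract syndeticity from uniform bounds on the jumps. Your global bookkeeping via $\eta_m$ and the explicit reflection to pass from a $(-\Lambda)$-multi-rotation to a $\Lambda$-multi-rotation are in fact a bit cleaner than the paper's local computation of $\theta_{n+1}-\theta_n$, which elides this sign issue.

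Two small slips worth fixing. First, the inequalities you cite for the ratio bound are the wrong ones: to get $\Vert\psi_{\jj_n}\Vert/\Vert\psi_{\jj_{n+1}}\Vert \le 2/\beta_{\min}$ you need the \emph{upper} bound $\Vert\psi_{\jj_n}\Vert < \rho\cdot 3\cdot 2^{-n}$ from (R3) together with the \emph{lower} bound $\Vert\psi_{\jj_{n+1}}\Vert \ge \beta_{\min}\rho\cdot 3\cdot 2^{-(n+1)}$ from minimality of $\jj_{n+1}$; as written you combine the lower bound for $\jj_n$ with the upper bound for $\jj_{n+1}$, which gives only a lower bound on the ratio. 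Second, your ratio bound controls $|\jj_{n+1}|-|\jj_n|$, i.e.\ makes the $m$-subsequence syndetic inside the multi-rotation $(\eta_m)$, but the statement also requires the chosen $n_j$ to be syndetic in $\mathbb{N}$; for that you need the complementary estimate that stretches of constant $|\jj_n|$ have bounded length, which follows from $\Vert\psi_{\jj_n}\Vert<\rho\cdot3\cdot 2^{-n}$ and $\Vert\psi_{\jj_{n'}}\Vert\ge\beta_{\min}\rho\cdot3\cdot 2^{-n'}$ when $\jj_n=\jj_{n'}$. Both fixes are routine and do not change the structure of your argument.
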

\begin{proof}
Clearly $k(n)$ is non-decreasing
and if $k(n+1)=k(n)$ then the definition dictates that $\ii_{n+1}=\ii_{n}$,  $\jj_{n+1}=\jj_{n}$ and $\theta_{n+1}=\theta_n$. But this can occur only for
a bounded number of consecutive $n$, because there is a uniform bound
$K$ on the number of integers $n$ satisfying $\alpha2^{-n}\leq\alpha^{k}<2^{-n}$, or equivalently, $\alpha^{k}<2^{-n}\leq\alpha^{k-1}$. 

The subsequence of $(\theta_{n})$ that we are interested
in is the subsequence along the times when $\theta_{n+1}-\theta_{n}>0$.
The previous paragraph shows that this is a syndetic subsequence.

Also, since we are assuming $\alpha<1/2$, it follows that if $k(n+1)>k(n)$,
then $k(n+1)=k(n)+1$.

Consider an $n$ such that $k(n+1)=k(n)+1$. Then $\ii_{n+1}=\ii_{n}u$
for some symbol $u\in I$. So $\varphi_{\ii_{n+1}}=\varphi_{\ii_{n}}\circ\varphi_{u}$.
Since $\varphi_{u}(X)\subseteq X$ we have $\varphi_{\ii_{n+1}}(X)\subseteq\varphi_{\ii_{n}}(X)$.
Since also $D_{n+1}\subseteq D_{n}$, we get
\[
Z_{D,\ii_{n+1}}=\bigcup_{g\in \mathcal{E}\cap D_{n+1}}g\varphi_{\ii_{n+1}}(X)\subseteq\bigcup_{g\in \mathcal{E}\cap D_{n}}g\varphi_{\ii_{n}}(X)=Z_{D_{n},\ii_{n}}\subseteq\psi_{\jj_{n}}(Y).
\]
Thus $Z_{D,\ii_{n+1}}\subseteq\psi_{\jj_{n}}(X)\cap\psi_{\jj_{n+1}}(X)$. By the strong separation of $Y$, one of the words $\jj_{n},\jj_{n+1}$ must
be a prefix of the other. Since $\left\Vert \psi_{\jj_{n+1}}\right\Vert <\rho\cdot3\cdot2^{-n}$,
if $\jj_{n+1}$ were a proper prefix of $\jj_{n}$ we would be in
contradiction of the minimality condition in step (R3) of the definition
of $\jj_{n}$. Thus, $\jj_{n+1}$ extends $\jj_{n}$.

Next, if $\jj_{n+1}$ is a proper extension of $\jj_{n}$ then $\jj_{n+1}=\jj_n\bf{v}$
for some ${\bf{v}}\in J^{*}$ of positive length $\ell$. Then $\ell$
is the minimal length needed to achieve $\beta_{\jj_{n+1}}=\left\Vert \psi_{\jj_{n+1}}\right\Vert <\rho\cdot3\cdot2^{-n-1}$.
Writing $\beta_{\max}=\max\{|\beta_{j}|\mid j\in J\}$, we have 
\[
\beta_{\jj_{n+1}}=\beta_{\bf{v}}\cdot\beta_{\jj_{n}}\leq\beta_{\max}^{\ell}\cdot\beta_{\jj_{n}}\leq\beta_{\max}^{\ell}\cdot\rho\cdot3\cdot2^{-n}.
\]
This gives a uniform upper bound $L$ for $\ell$. Now, 
\[
\left\Vert M_{D_{n+1},\ii_{n+1}}\right\Vert =\left\Vert \psi_{\jj_{n}\bf{v}}^{-1}f\varphi_{\ii_{n}u}\right\Vert =|\beta_{\bf{v}}^{-1}\alpha|\cdot\left\Vert \psi_{\jj_{n}}^{-1}f\varphi_{\ii_{n}}\right\Vert =|\beta_{\bf{v}}^{-1}\alpha|\cdot\left\Vert M_{D_{n1},\ii_{n}}\right\Vert .
\]
Taking logarithms and dividing by $\log\alpha$, we see that 
\[
\theta_{n+1}-\theta_{n}\in\frac{\log\beta_{\bf{v}}^{-1}}{\log\alpha}+1
\]
so modulo one, writing ${\bf{v}}=v_{1}\ldots v_{\ell}$, we have $\theta_{n+1}-\theta_{n}\equiv\sum_{i=1}^{\ell}\frac{\log\beta_{v_{i}}}{\log\alpha}$.
Since $\ell\leq L$ this is a syndetic subsequence of $\Lambda$-multi-rotation. 
\end{proof}
We remark that, unlike the magnification used in the theory of CP-distributions,
iterating the renormalization map does not generally yield a map that
is itself a renormalization. That is, given $D\in\mathcal{D}_{n}$,
$D'\in\mathcal{D}_{k}$ and given $\ii\in I^{n}$, $\ii'\in I^{m}$, 
the map $M_{D',\ii'}\circ M_{D,\ii}$ in general will not equal $M_{D'',\ii''}$
for any dyadic cell $D''\in\mathcal{D}_{m+n}$ and $\ii''\in I^{n+m}$.
For instance, $M_{D,\ii}^{-1}(D')$ need not be a dyadic cell. These
maps do enjoy an approximate closure under composition, but we will
not use this fact.

\subsection{\label{subsec:Computation-of-renormalization}Computation of the
renormalization map}

Let $D,\ii,\jj$ be as in the definition of $M_{D,\ii}$, and let
us compute $M_{D,\ii}$ more explicitly. 
\begin{lem}
In our coordinates on $\mathcal{A}$, the map $M_{D,\ii}$ is
the affine transformation of $\mathcal{E}\cap D$ given by, 
for  $f(x)=ax+b$, 
$$ (M_{D,\ii}f)(x) =  \left( \beta_{\jj}^{-1}\alpha_{\ii}\cdot a\right)\cdot x +\beta_{\jj}^{-1}\cdot\left((a\sigma_{\ii}+b)-\tau_{\jj}\right),$$
where $(\alpha_{\ii},\sigma_{\ii})$ and $(\beta_{\jj},\tau_{\jj})$ are the corresponding parameters of $\varphi_{\ii},\psi_{\jj}$.

In particular, in our coordinates on $\mathcal{A}$, the map is affine and is given by \[
  (a,b)\mapsto ( \beta_{\jj}^{-1}\alpha_{\ii}\cdot a \;,\; \beta_{\jj}^{-1}\cdot((a\sigma_{\ii}+b)-\tau_{\jj})).
\]
\end{lem}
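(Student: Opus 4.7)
The statement is a direct computation, so my plan is to unwind the definition $M_{D,\ii}f = \psi_{\jj}^{-1}\circ f\circ \varphi_{\ii}$ and collect coefficients in the coordinates $(a,b)$ on $\mathcal{A}$. The strategy is to compute from the inside out: first evaluate $f\circ \varphi_{\ii}$, then apply $\psi_{\jj}^{-1}$.

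First I would write out the affine maps explicitly: $\varphi_{\ii}(x)=\alpha_{\ii}x+\sigma_{\ii}$ and $\psi_{\jj}(x)=\beta_{\jj}x+\tau_{\jj}$, where by construction $\alpha_{\ii},\beta_{\jj}\neq 0$, so $\psi_{\jj}^{-1}$ exists and is given by $\psi_{\jj}^{-1}(y)=\beta_{\jj}^{-1}(y-\tau_{\jj})$. Composing, one gets $f(\varphi_{\ii}(x))=a(\alpha_{\ii}x+\sigma_{\ii})+b=(a\alpha_{\ii})\,x+(a\sigma_{\ii}+b)$, so that applying $\psi_{\jj}^{-1}$ yields
\[
(M_{D,\ii}f)(x)=\beta_{\jj}^{-1}\bigl((a\alpha_{\ii})x+(a\sigma_{\ii}+b)-\tau_{\jj}\bigr)=\bigl(\beta_{\jj}^{-1}\alpha_{\ii}a\bigr)x+\beta_{\jj}^{-1}\bigl((a\sigma_{\ii}+b)-\tau_{\jj}\bigr),
\]
which matches the claimed expression. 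Reading off the coefficients in the $(a,b)$-parametrization of $\mathcal{A}$ gives the stated affine action $(a,b)\mapsto(\beta_{\jj}^{-1}\alpha_{\ii}a,\ \beta_{\jj}^{-1}((a\sigma_{\ii}+b)-\tau_{\jj}))$.

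There is essentially no obstacle here; the only thing worth double-checking is that the map is well-defined, which I would do by noting that the construction in step (R3) ensures $\psi_{\jj}^{-1}\circ f\circ \varphi_{\ii}\in\mathcal{E}$, and that the formula displayed above is linear in $(a,b)$, hence extends to an affine map on all of $\mathcal{A}$ (even though we only need it on $\mathcal{E}\cap D$). Since orientations are preserved under our standing assumption and $\alpha_{\ii},\beta_{\jj}>0$, no sign bookkeeping is required, so the computation above is the whole proof.
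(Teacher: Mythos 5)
Your computation is correct, and it is exactly the ``straightforward computation'' the paper alludes to when it omits the proof: unwind $M_{D,\ii}f=\psi_{\jj}^{-1}\circ f\circ\varphi_{\ii}$ and collect coefficients in $(a,b)$.
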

We omit the proof, which is a straightforward computation.

The key property we shall use is that, up to a small perturbation of each coordinate
in the image, this map behaves like 
\[
(a,b)\mapsto(a,\pi_{\sigma_{\ii}}\circ H_{D}(a,b)).
\]
Here, $\pi_x$ is the linear functional appearing in Theorem \ref{Thm Orponen}.
\begin{prop}
\label{prop:explicit-renormalization-map}For every compact $\mathcal{F}\subseteq\mathcal{E}$
there exists a compact $\mathcal{H}\subseteq\mathcal{A}$ with the
following property. Let $D\in\mathcal{D}_{n}$ be pre-compact with
$\mathcal{F}\cap D\neq\emptyset$, and let $\ii\in I^{k(n)}$. Then there
exist $h_{1},h_{2}\in\mathcal{H}$ such that for all $(a,b)\in\mathcal{F}\cap D$,
\[
M_{D,\ii}(a,b)=(h_{1}(a)\;,\;h_{2}\circ\pi_{\sigma_{\ii}}\circ H_{D}(a,b))
\]
\end{prop}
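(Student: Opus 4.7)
The plan is to unwind the explicit formula $M_{D,\ii}(a,b)=\bigl(\beta_\jj^{-1}\alpha_\ii a,\;\beta_\jj^{-1}(a\sigma_\ii+b-\tau_\jj)\bigr)$ from the preceding lemma, and algebraically rearrange the second coordinate so that it factors through $\pi_{\sigma_\ii}\circ H_D$. Writing $D=[p_1,p_1+2^{-n})\times[p_2,p_2+2^{-n})$, one has $H_D(a,b)=2^n(a-p_1,b-p_2)$ and hence
\[
\pi_{\sigma_\ii}\circ H_D(a,b)=2^n\bigl[\sigma_\ii a+b-(\sigma_\ii p_1+p_2)\bigr].
\]
Solving for $\sigma_\ii a+b$ in terms of this expression and substituting into the second coordinate of $M_{D,\ii}(a,b)$, the obvious candidates are $h_1(a):=(\beta_\jj^{-1}\alpha_\ii)\,a$ and $h_2(y):=(\beta_\jj^{-1}\cdot 2^{-n})\,y+\beta_\jj^{-1}(\sigma_\ii p_1+p_2-\tau_\jj)$, and the required identity then holds by construction.

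What remains is to show that, as $D,\ii,n$ vary, the maps $h_1,h_2$ stay in a fixed compact set $\mathcal{H}\subseteq\mathcal{A}$ depending only on $\mathcal{F}$. The slope of $h_2$ is $\beta_\jj^{-1}\cdot 2^{-n}$, which is $\Theta(1)$ by (\ref{eq:psi-j-bound}), with explicit constants coming from $\rho$ and $\min_j|\beta_j|$. Combined with (\ref{eq:choice-of-k}), which gives $\alpha_\ii=\alpha^{k(n)}=\Theta(2^{-n}/\chi_D)$, the slope of $h_1$ is $\Theta(1/\chi_D)$; upper bounding it requires a uniform lower bound on $\chi_D$, and this is where compactness of $\mathcal{F}$ is used: since $\mathcal{F}$ is a compact subset of $\mathcal{A}\subseteq\mathbb{R}^2\setminus(\{0\}\times\mathbb{R})$, the first coordinates of points in $\mathcal{F}$ are bounded away from zero by some $\varepsilon(\mathcal{F})>0$, so any cell $D$ meeting $\mathcal{F}$ satisfies $\chi_D\geq\varepsilon(\mathcal{F})$.

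The translation of $h_1$ is zero, so only the translation of $h_2$ needs attention. Pick any $(a_0,b_0)\in\mathcal{F}\cap D$; then $M_{D,\ii}(a_0,b_0)\in\mathcal{E}_0$, which is bounded because the translation parts of maps in $\mathcal{E}$ are uniformly bounded (any $f\in\mathcal{E}$ sends $0\in X$ into the compact set $Y$). Hence $\beta_\jj^{-1}(a_0\sigma_\ii+b_0-\tau_\jj)$ is uniformly bounded, while its difference from the translation of $h_2$ equals
\[
\beta_\jj^{-1}\bigl[\sigma_\ii(a_0-p_1)+(b_0-p_2)\bigr],
\]
which is $O(1)$ because $|a_0-p_1|,|b_0-p_2|<2^{-n}$, $\sigma_\ii\in X$ is bounded, and $\beta_\jj^{-1}=O(2^n)$ by (\ref{eq:psi-j-bound}). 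Consequently the translation of $h_2$ is bounded, and $h_2$ lies in the desired compact set.

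The main obstacle is conceptual rather than computational: ensuring that the three scales involved---the cell sidelength $2^{-n}$, the norm $\|\psi_\jj\|$ of the chosen cylinder in $Y$, and the norm $\alpha^{k(n)}$ of the chosen cylinder in $X$---are tightly matched so that all the factors of $2^n$ cancel and produce $\Theta(1)$ quantities uniformly in $n$. This matching is exactly what (\ref{eq:psi-j-bound}), (\ref{eq:choice-of-k}) and the compactness of $\mathcal{F}$ together accomplish, and once they are in hand the proof reduces to the short algebraic rearrangement above.
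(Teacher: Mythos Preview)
Your proof is correct and follows essentially the same approach as the paper: both unwind the explicit formula for $M_{D,\ii}$, identify the slopes of $h_1,h_2$ as $\beta_\jj^{-1}\alpha_\ii$ and $\beta_\jj^{-1}2^{-n}$, control them via (\ref{eq:choice-of-k}), (\ref{eq:psi-j-bound}) and the uniform lower bound on $\chi_D$ coming from compactness of $\mathcal{F}$, and bound the translation of $h_2$ by evaluating at a point of $\mathcal{F}\cap D$ and using that the image lands in the bounded set $\mathcal{E}_0\subseteq Y$. Your version is if anything more transparent, since you write $h_1,h_2$ down explicitly at the outset rather than extracting them from a comparison of $g_1$ and $g_2$.
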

\begin{proof}
Let $f\in\mathcal{F}\cap D$ and write $f(x)=ax+b$ and $(M_{D,\ii}f)(x)=a'x+b'$.

For the first coordinate of $M_{D,\ii}(a,b)$, we have seen that $a'=\beta_{\jj}^{-1}\alpha_{\ii}a$. By (\ref{eq:choice-of-k}) and (\ref{eq:psi-j-bound}) we have
$\beta_{\jj}^{-1}\alpha_{\ii}=\Theta(1)$. Thus $a'$ is a multiple
of $a$ by a constant of order $\Theta(1)$. This determines $h_1$.

We turn to the second coordinate. We must compare the affine maps
\begin{align*}
g_{1}:(u,v) & \mapsto b'=\beta_{\jj}^{-1}\cdot\left(\pi_{\sigma_{\ii}}(u,v)-\tau_{\jj}\right)\\
g_{2}:(u,v) & \mapsto\pi_{\sigma_{\ii}}\circ H_{D}(u,v).
\end{align*}
The scaling  constants of $g_{1},g_{2}$ are $\beta_{\jj}^{-1}\sigma_{\ii}$
and $2^{n}\sigma_{\ii}$, respectively. By (\ref{eq:psi-j-bound})
again, these differ by a multiplicative constant of order $\Theta(1)$.
In order to compare the translation parts of $g_{1},g_{2}$, we note
that by what we have already seen,
\begin{align*}
|g_{1}(0,0)-g_{2}(0,0)| & =|(g_{1}(a,b)-\beta_{\ii}^{-1}\sigma_{\ii}a)-(g_{2}(a,b)-2^{n}\sigma_{\ii}a)|\\
 & =|g_{1}(a,b)-g_{2}(a,b)|+\Theta(|a|)\\
 & =|g_{1}(a,b)-g_{2}(a,b)|+\Theta_{\mathcal{F}}(1).
\end{align*}
So it suffices  to show that $|g_{1}(a,b)-g_{2}(a,b)|=O(1)$.
And indeed, $g_{1}(a,b)$ is just the second coordinate of $M_{D,\ii}(a,b)$,
which lies in the compact set $Y$, so $g_{1}(a,b)=O(1)$.
We also have $(a,b)\in D$, so $H_{D}(a,b)\in[0,1)^{d}$, and $\sigma_{\ii}=\varphi_{\ii}(0)\in X$,
so $\sigma_{\ii}=O(1)$. Together these imply 
\[
\pi_{\sigma_{\jj}}H_{D}(a,b)=O(1).
\]
Thus $|g_{1}(a,b)-g_{2}(a,b)|=O_{\mathcal{F}}(1)$ as desired.
\end{proof}

\subsection{Limiting behavior of renormalization }

We now study the behavior of renormalization of measures on increasingly small
dyadic cells in $\mathcal{E}$. Given a measure $\mu$ with $\mu(\mathcal{E}\cap D)>0$ for some dyadic cell $D$, and a suitable $\ii\in I^k$, 
define the renormalization of $\mu$ by 
$$\mu ^{D,\ii}:= M_{D,\ii} \mu.$$
\begin{prop}
\label{prop:limits-of-renormalized-measures} Let  $\mathcal{F}\subseteq\mathcal{E}$ be a compact subset. Then there exists a compact subset $\mathcal{H}\subseteq \mathcal{A}$ such that the following holds. Suppose that for each
$k\in\mathbb{N}$ we are given :
\begin{enumerate}

\item A sequence of integer scales $n_{k}\rightarrow\infty.$
\item A dyadic cell $D_{k}\in\mathcal{D}_{n_{k}}$ with $D_{k}\cap\mathcal{F}\neq\emptyset$
and $D_{k}$ pre-compact.
\item A probability measure $\mu_{k}$ supported on $D_{k}\cap\mathcal{F}$,
such that $\mu=\lim\mu^{D_{k}}$ exists.
\item A sequence $\ii_{k}\in I^{*}$ such that $\sigma=\lim\sigma_{\ii_{k}}$
and $\widetilde{\mu}=\lim\mu_{k}^{D_{k},\ii_{k}}$ exist.
\end{enumerate}
Then there exists $z\in\mathbb{R}$ and  some non-singular  $h\in\mathcal{H}$ with
\[
\widetilde{\mu}=\delta_{z}\times h\pi_{\sigma}\mu,
\]
and $\widetilde{\mu}$ is supported on $\mathcal{E}_0$.
\end{prop}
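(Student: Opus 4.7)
The plan is to apply Proposition \ref{prop:explicit-renormalization-map} to decompose each $M_{D_k, \ii_k}$ into its two coordinate maps, and then analyze the two marginals of the pushforward $\mu_k^{D_k, \ii_k}$ separately. Applying that proposition to $\mathcal{F}$ supplies the compact set $\mathcal{H} \subseteq \mathcal{A}$ asserted in the conclusion, and for each $k$ provides $h_1^{(k)}, h_2^{(k)} \in \mathcal{H}$ such that
\[
M_{D_k, \ii_k}(a,b) = \left(h_1^{(k)}(a),\; h_2^{(k)} \circ \pi_{\sigma_{\ii_k}} \circ H_{D_k}(a,b)\right)
\]
on $D_k \cap \mathcal{F}$. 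By compactness of $\mathcal{H}$ I would pass to a subsequence along which $h_2^{(k)} \to h$ for some $h \in \mathcal{H}$; non-singularity of $h$ is automatic since $\mathcal{H} \subseteq \mathcal{A}$. Because $\widetilde{\mu}$ is assumed to exist as a weak limit, identifying it along any such subsequence suffices.

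Next, analyze the two marginals separately. For the first coordinate, the $a$-projection of $D_k$ is an interval of length $2^{-n_k}$ and $h_1^{(k)}$ is affine with Lipschitz constant uniformly bounded over the compact set $\mathcal{H}$; hence the first marginal of $\mu_k^{D_k, \ii_k}$ is supported on an interval of diameter $O(2^{-n_k}) \to 0$. This marginal is contained in the first-coordinate projection of $\mathcal{E}_0$, which is the bounded set $[\alpha/(3\rho\sqrt{e}),1]$, so one can pass to a further subsequence along which the midpoint of the supporting interval converges to some $z \in \mathbb{R}$; then the first marginal converges weakly to $\delta_z$. For the second coordinate, since $\mu_k$ is supported on $D_k$ we have $H_{D_k}\mu_k = \mu^{D_k}$ by definition, so the second marginal of $\mu_k^{D_k, \ii_k}$ equals $h_2^{(k)} \pi_{\sigma_{\ii_k}} \mu^{D_k}$. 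Using the hypotheses $\mu^{D_k} \to \mu$ and $\sigma_{\ii_k} \to \sigma$, together with $h_2^{(k)} \to h$, and noting that $\mu^{D_k}$ is supported in $[0,1)^2$ so the maps $\pi_{\sigma_{\ii_k}}$ converge uniformly on the supports, this pushforward converges weakly to $h \pi_\sigma \mu$.

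Finally, I would reconstruct the joint measure. Each $\mu_k^{D_k, \ii_k}$ is supported on the compact set $\mathcal{E}_0$ of (\ref{eq: E zero}). Since its first marginal asymptotically concentrates at $z$ and its second marginal converges to $h \pi_\sigma \mu$, a standard uniform-continuity argument applied to bounded continuous test functions restricted to $\mathcal{E}_0$ yields joint weak convergence to the product $\delta_z \times h \pi_\sigma \mu$, which must equal $\widetilde{\mu}$. The support assertion then follows because $\mathcal{E}_0$ is closed. The step most worthy of care is the joint-from-marginals identification, but the uniform support control provided by compactness of $\mathcal{E}_0$ makes it routine; the repeated subsequence extractions are similarly standard, precisely because the existence of $\widetilde{\mu}$ as a weak limit is part of the hypotheses.
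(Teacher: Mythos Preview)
Your proposal is correct and follows essentially the same approach as the paper: both apply Proposition \ref{prop:explicit-renormalization-map} to obtain $\mathcal{H}$ and the coordinate maps $h_1^{(k)},h_2^{(k)}$, observe that the first marginal collapses to a point while the second is $h_2^{(k)}\pi_{\sigma_{\ii_k}}\mu_k^{D_k}$, and then pass to subsequential limits using compactness of $\mathcal{H}$. The paper phrases the joint identification as a weak-$*$ $o(1)$ approximation $\mu_k^{D_k,\ii_k}=\delta_{z_k}\times h_{k,2}\pi_{\sigma_{\ii_k}}\mu_k^{D_k}+o(1)$ rather than via your marginals-plus-uniform-continuity argument, but the content is the same.
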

Note that since all the parameters in the statement lie in compact
sets, convergence of the various sequences can always be achieved
by passing to a subsequence. So the statement really characterizes
accumulation points arising from general sequences $n_{k},D_{k},\ii_{k},\mu_{k}$. 
\begin{proof}
Choose $(p_{q},q_{k})\in\mathcal{F}\cap D_{k}$. 

Let $\mathcal{H}$ be as in Proposition \ref{prop:explicit-renormalization-map}.
Then for all $k$ there are $h_{k,1},h_{k,2}\in\mathcal{H}$ such that 
\begin{align*}
M_{D_{k},\ii_{k}}(u,v) & =(h_{k,1}(u)\,,\,h_{k,2}\circ\pi_{\sigma_{\ii_{k}}}\circ H_{D_{k}}(u,v)).
\end{align*}

Consider the measure
\[
\mu_{k}^{D_{k},\ii_{k}}=M_{D_{k},\ii_{k}}(\mu_{k})
\]
Since $h_{k,1}$ lies in the compact set $\mathcal{H}$ and since
the first marginal of $\mu_{k}$ is supported on a $2^{-n_{k}}$-ball
around $p_{k}$, the first marginal of $\mu^{D_{k},\ii_{k}}$ is supported
on a $\Theta(2^{-n_{k}})$-ball around $z_{k}=h_{k,1}(p_{k})$. Also,
the second marginal of $\mu_{k}^{D_{k},\ii_{k}}$ is supported on
a compact set independent of $k$. Therefore, 
\begin{align*}
\mu_{k}^{D_{k},\ii_{k}} & =\delta_{z_{k}}\times h_{k,2}\pi_{\sigma_{\ii_{k}}}H_{D_{k}}\mu+o(1)
\end{align*}
where the $o(1)$ error should be understood in the weak-{*} sense.
Noting that $H_{D_{k}}\mu_{k}=\mu_{k}^{D_{k}}$, we have
\[
\mu_{k}^{D_{k},\ii_{k}}=\delta_{z_{k}}\times h_{k,2}\pi_{\sigma_{\ii_{k}}}\mu^{D_{k}}+o(1).
\]

By passing to a subsequence if necessary, we can assume that $(p_{k},q_{k})\rightarrow(p,q)\in\mathcal{F}$,
and that $h_{1,k}\rightarrow h_{1}$ and $h_{2,k}\rightarrow h_{2}$.
Then also $z_{k}\rightarrow z=h_{1}p$ and $h_{2,k}\pi_{\sigma_{\ii_{k}}}\rightarrow h_{2}\pi_{\sigma}$.
Passing to a limit in the previous expression as $k\rightarrow\infty$,
we have 
\[
\widetilde{\mu}=\lim\mu_{k}^{D_{k},\ii_{k}}=\delta_{z}\times h_{2}\pi_{\sigma}\mu
\]
as desired.
\end{proof}

\section{\label{sec:Proof-of-main-result}Proof of Theorem \ref{thm:main}}

\subsection{\label{subsec:Amplifying-dimention-and-proof}Amplifying Assouad
dimension to Hausdorff dimension}

Let $\mathcal{F}\subseteq\mathcal{E}$ be a compact set and let $\delta=\dim_{A}\mathbb{\mathcal{F}}$. Theorem \ref{thm:realizing-Assouad-dim-with-CP}  provides us with a CP-distribution
whose measures arise as weak-* limits of the magnifications of measures supported on $\mathcal{F}$, which are typically $\delta$-dimensional, and, therefore, their supports
are at least as large. But since magnification does not preserve $\mathcal{E}$, typical measures for $P$ also need not be supported
on $\mathcal{E}$, so they give no information about embeddings. 

In this
section we use renormalization instead of magnification to obtain a CP-distribution whose 
measures are supported on $\mathcal{E}$. 

\begin{defn}
A set $\mathcal{F}\subseteq\mathcal{E}$  is said to have \textit{constant scaling} if every $f\in\mathcal{F}$ has the same scaling constant ratio. A measure $\mu\in \mathcal{P}(\mathcal{E})$ is said to have constant scaling if $\text{supp}(\mu)$ has constant scaling.
\end{defn}
 The measures we shall obtain will be  of constant
scaling, which restricts their dimension to be at most $1$. As we shall see later, however, being of
constant scaling is actually an advantage.

Note that homotheties preserve sets of constant scaling, and the representation
of $M_{D,\ii}$ in Proposition \ref{prop:explicit-renormalization-map}
implies that $M_{D,\ii}$ does too. 

Moreover, if $\mathcal{F}\subseteq\mathcal{E}$ has constant scaling
and if $D\in\mathcal{D}_{n}$ with $D\cap\mathcal{E} \neq\emptyset$
then $M_{D,\ii}(\mathcal{F}\cap D)$ and $H_{D}(\mathcal{F}\cap D)$
are homothetic to each other for every suitable $\ii\in I^{*}$. Furthermore, 
the homothety can be taken with coefficients that are bounded in terms
of $\chi_{D}$. This again follows from the form of $M_{D,\ii}$ and the proof of 
Proposition \ref{prop:explicit-renormalization-map}.
\begin{prop}
\label{prop:boosting-assouad-to-hausdorff-on-const-contraction}Let
$\mathcal{F}\subseteq\mathcal{E}$ be a compact set of constant scaling,
and let $\delta=\dim_{A}\mathcal{F}>0$. Then there exists a $\delta$-dimensional
ergodic CP-distribution $P$ such that $P$-a.e. $\mu$ is, up to
a homothety, supported on a constant scaling subset of $\mathcal{E}_0$.
\end{prop}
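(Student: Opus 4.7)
The plan is to apply Theorem \ref{thm:realizing-Assouad-dim-with-CP} directly to $\mathcal{F}$ to obtain the CP-distribution $P$, and then to argue that the approximating magnified measures all lie in a suitable closed set of ``renormalization-friendly'' measures. The key input, noted just before the proposition, is that for constant scaling $\mathcal{F}$, the magnification $H_{D}$ and the renormalization $M_{D,\ii}$ differ on $\mathcal{F}\cap D$ by a homothety whose coefficients are controlled by $\chi_{D}$.

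Let $P$ be the $\delta$-dimensional ergodic CP-distribution produced by Theorem \ref{thm:realizing-Assouad-dim-with-CP} applied to $\mathcal{F}$, and for each $\varepsilon>0$ let $N^{\varepsilon}$, $D_{1}^{\varepsilon}\supseteq\cdots\supseteq D_{N^{\varepsilon}}^{\varepsilon}$ and $\nu^{\varepsilon}$ on $\mathcal{F}$ be the associated data, so that the empirical distribution $P^{\varepsilon}$ of the magnifications $(\nu^{\varepsilon})^{D_{k}^{\varepsilon}}$ over uniform $k$ is $\varepsilon$-close to $P$. Write $a_{0}\neq0$ for the common scaling constant of elements of $\mathcal{F}$. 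For $D_{k}^{\varepsilon}$ meeting $\mathcal{F}$ at sufficiently fine scales one has $\chi_{D_{k}^{\varepsilon}}\in[a_{0}/2,\,1]$; combining this with the explicit form of $M_{D,\ii}$ from Proposition \ref{prop:explicit-renormalization-map} and the $\Theta(1)$ bounds (\ref{eq:choice-of-k}) and (\ref{eq:psi-j-bound}), one obtains a single compact family $\mathcal{T}$ of invertible homotheties of $\mathbb{R}^{2}$ with the following property: for every $\varepsilon,k$ and every admissible $\ii_{k}^{\varepsilon}$ there exists $T_{k}^{\varepsilon}\in\mathcal{T}$ with $T_{k}^{\varepsilon}(\nu^{\varepsilon})^{D_{k}^{\varepsilon}}=(\nu^{\varepsilon})^{D_{k}^{\varepsilon},\ii_{k}^{\varepsilon}}$, and by construction this renormalized measure is supported on $M_{D_{k}^{\varepsilon},\ii_{k}^{\varepsilon}}(\mathcal{F}\cap D_{k}^{\varepsilon})$, a constant scaling subset of $\mathcal{E}_{0}$.

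Let $\mathcal{M}$ denote the set of $\mu\in\mathcal{P}(\mathbb{R}^{2})$ for which some $T\in\mathcal{T}$ sends $\mu$ to a measure supported on a constant scaling subset of $\mathcal{E}_{0}$, so that the conclusion of the proposition is exactly $P(\mathcal{M})=1$. The previous paragraph shows $P^{\varepsilon}(\mathcal{M})=1$ for every $\varepsilon$, so by $P^{\varepsilon}\to P$ weakly and the Portmanteau theorem it is enough to verify that $\mathcal{M}$ is weak-$*$ closed. Suppose $\mu_{n}\to\mu$ weakly with witnesses $T_{n}\in\mathcal{T}$; by compactness of $\mathcal{T}$, after passing to a subsequence $T_{n}\to T\in\mathcal{T}$, whence $T_{n}\mu_{n}\to T\mu$. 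Each $T_{n}\mu_{n}$ is supported on $\{a=c_{n}\}\cap\mathcal{E}_{0}$ with $|c_{n}|\geq\alpha/(3\rho\sqrt{e})$; after a further subsequence $c_{n}\to c$, and closedness of $\mathcal{E}_{0}$ yields that $T\mu$ is supported on $\{a=c\}\cap\mathcal{E}_{0}$, again a constant scaling subset of $\mathcal{E}_{0}$.

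The main obstacle is constructing the uniform compact family $\mathcal{T}$; this is a direct computation from Proposition \ref{prop:explicit-renormalization-map}, using that on the line $\{a=a_{0}\}$ one has $\chi_{D_{k}^{\varepsilon}}\asymp|a_{0}|$ bounded away from $0$ and $\infty$, so that the affine identification between $H_{D_{k}^{\varepsilon}}(\mathcal{F}\cap D_{k}^{\varepsilon})$ and $M_{D_{k}^{\varepsilon},\ii_{k}^{\varepsilon}}(\mathcal{F}\cap D_{k}^{\varepsilon})$ extends to a $2$-dimensional homothety with uniformly bounded parameters.
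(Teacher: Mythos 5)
Your proof is correct and takes essentially the same approach as the paper: apply Theorem \ref{thm:realizing-Assouad-dim-with-CP} to $\mathcal{F}$, and then use the observation (made just before the proposition) that on a constant scaling set, renormalization $M_{D,\ii}$ differs from magnification $H_{D}$ by a homothety with uniformly bounded parameters, so that the limiting measures of $P$ are homothetic to measures on constant scaling subsets of $\mathcal{E}_{0}$. The only difference is cosmetic: the paper fixes a $P$-typical $\mu$ and passes to a limit of renormalized measures directly, whereas you package the same limiting argument via a closed set $\mathcal{M}$ of admissible measures and the Portmanteau theorem, which makes the ``$P$-a.e.'' step slightly more explicit.
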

\begin{proof}
Find a $\delta$-dimensional ergodic CP-distribution $P$ as in
Theorem \ref{thm:realizing-Assouad-dim-with-CP}. Fix a $P$-typical measure $\mu$.  By the same theorem, there is a sequence of dyadic cells $D_{k}\in\mathcal{D}_{n_{k}}$
and probability measures $\mu_{k}$ on $\mathcal{F}\cap D_{k}$, such
that $n_k\to\infty$ and  $\mu_{k}^{D_{k}}\rightarrow\mu$. 

Choose  $\ii_{k}\in I^{k(n_{k})}$ for $k=1,2,3,\ldots$ so that the corresponding renormlization operators $M_{D_k,\ii_k}$ are well defined.  Passing to a subsequence if necessary, we can assume that $\mu_{k}^{D_{k},\ii_{k}}\rightarrow\widetilde{\mu}$
for some measure $\widetilde{\mu}$. Since $\mathcal{F}$ has constant contraction and $\mu_{k}$ are supported on $\mathcal{F}$, also $\mu_{k}$ has constant contraction. This
property is preserved by renormalization, so $\mu_{k}^{D_{k},\ii_{k}}$
has constant contraction. Since this property is also preserved under
limits, $\widetilde{\mu}$ has constant contraction.

Finally, by the formula for $M_{D_{k},\ii_{k}}$ in Proposition \ref{prop:explicit-renormalization-map},
and the fact that $\mathcal{F}$ has constant contraction, there are
maps $H_{k}:\mathbb{R}^{2}\rightarrow\mathbb{R}^{2}$ of bounded norm
whose distance from the set of homotheties tends to $0$ as $k\rightarrow\infty$,
and such that $\mu_{k}^{D_{k},\ii_{k}}=H_{k}\mu_{k}^{D_{k}}$. Passing
to a subsequence if necessary, we can assume $H_{k}\rightarrow H$
for a bona fide homothety $H$, yielding $\widetilde{\mu}=H\mu$,
as claimed.
\end{proof}
Recall the definition of $\mathcal{E}_0$ from \eqref{eq: E zero}.
\begin{thm}
\label{thm:transferring-dim-to-constant-scaling-set}Let $\mathcal{F}\subseteq\mathcal{E}$
be a bounded set, and set $\delta=\dim_{A}\mathcal{F}$ and $\delta^{*}=\min\{1,\delta\}$.
Then there exists a compact subset of constant scaling in $\mathcal{E}_{0}$
with Assouad dimension at least $\delta^{*}$. 

\end{thm}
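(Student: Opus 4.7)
The plan is to realize the Assouad dimension of $\mathcal{F}$ by a CP-distribution and then renormalize along a projection direction chosen via Orponen's theorem. Concretely, I would apply Theorem~\ref{thm:realizing-Assouad-dim-with-CP} with $E=\mathcal{F}$ to obtain a $\delta$-dimensional ergodic CP-distribution $P$, and select a $P$-typical $\mu$ realized as $\mu=\lim\mu_k^{D_k}$ for some scales $n_k\to\infty$, pre-compact cells $D_k\in\mathcal{D}_{n_k}$ with $D_k\cap\mathcal{F}\ne\emptyset$, and probability measures $\mu_k$ supported on $\mathcal{F}\cap D_k$. By Theorem~\ref{thm:dim-of-CP}, $\mu$ is exact dimensional of dimension $\delta$, hence $\dim_A\supp\mu\ge\dim\supp\mu\ge\delta$. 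The strategy is then to replace the magnifications $\mu_k^{D_k}$ by renormalizations $\mu_k^{D_k,\ii_k}$, selecting the words $\ii_k\in I^{k(n_k)}$ so that $\sigma_{\ii_k}$ converges to a carefully chosen $\sigma\in X$.

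To choose $\sigma$, apply Orponen's Theorem~\ref{Thm Orponen} to $\supp\mu\subseteq\mathbb{R}^{2}$: the exceptional set $E_{0}=\{\sigma:\dim_A\pi_\sigma\supp\mu<\min\{1,\dim_A\supp\mu\}\}$ has Hausdorff dimension zero, so $\dim_A\pi_\sigma\supp\mu\ge\min\{1,\delta\}=\delta^{*}$ for every $\sigma\notin E_{0}$. Since $X$ is non-trivial we have $\dim X>0$, and I may pick $\sigma\in X\setminus E_{0}$. Writing $(a_{1},a_{2},\ldots)\in I^{\mathbb{N}}$ for an address of $\sigma$, set $\ii_k:=a_{1}\cdots a_{k(n_k)}$; then $|\ii_k|\to\infty$ and $\sigma_{\ii_k}=\varphi_{\ii_k}(0)\to\sigma$ as required, exactly fitting hypothesis (4) of Proposition~\ref{prop:limits-of-renormalized-measures}.

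Using compactness of the space of probability measures on the compact set $\mathcal{E}_{0}$, pass to a subsequence along which $\widetilde{\mu}:=\lim\mu_k^{D_k,\ii_k}$ exists. By Proposition~\ref{prop:limits-of-renormalized-measures} there exist $z\in\mathbb{R}$ and a non-singular $h\in\mathcal{H}$ with $\widetilde{\mu}=\delta_{z}\times h\pi_\sigma\mu$, supported on $\mathcal{E}_{0}$. Hence $\supp\widetilde{\mu}=\{z\}\times h\pi_\sigma(\supp\mu)$ is a compact subset of $\mathcal{E}_{0}$ contained in the vertical line $\{z\}\times\mathbb{R}$, so it has constant scaling. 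Because $h$ is a bi-Lipschitz affine self-map of $\mathbb{R}$ it preserves Assouad dimension, giving
\[
\dim_A\supp\widetilde{\mu}=\dim_A h\pi_\sigma(\supp\mu)=\dim_A\pi_\sigma\supp\mu\ge\delta^{*},
\]
which is the required conclusion.

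The main obstacle I anticipate is coordinating the CP-realization sequence $(n_k,D_k,\mu_k)$ with the word selection $\ii_k$: the length $k(n_k)$ is dictated by $n_k$, so the approximation $\sigma_{\ii_k}\to\sigma$ must happen along this prescribed sequence of lengths, and after the several subsequence extractions one must still retain $\mu_k^{D_k}\to\mu$, $\sigma_{\ii_k}\to\sigma$, and $\mu_k^{D_k,\ii_k}\to\widetilde{\mu}$ simultaneously. Sequential compactness of the measure space, of $X$, and of the auxiliary set $\mathcal{H}$ from Proposition~\ref{prop:explicit-renormalization-map} makes this possible, so the principal challenge is bookkeeping rather than conceptual.
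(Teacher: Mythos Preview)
Your proposal is correct and follows essentially the same route as the paper: realize $\dim_A\mathcal{F}$ by an ergodic CP-distribution $P$ via Theorem~\ref{thm:realizing-Assouad-dim-with-CP}, pick a $P$-typical $\mu$ approximated by magnifications $\mu_k^{D_k}$, use Orponen's theorem together with $\dim X>0$ to select $\sigma\in X$ with $\dim_A\pi_\sigma\supp\mu\ge\delta^*$, take $\ii_k$ to be the $k(n_k)$-prefix of an address of $\sigma$, and pass to a subsequential limit $\widetilde{\mu}$ of the renormalizations, invoking Proposition~\ref{prop:limits-of-renormalized-measures} to identify $\widetilde{\mu}=\delta_z\times h\pi_\sigma\mu$ supported on $\mathcal{E}_0$. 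The paper's argument is organized identically; your anticipated ``obstacle'' is indeed just bookkeeping, handled exactly as you describe by sequential compactness.
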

\begin{proof}
Let $P$ be a $\delta$-dimensional ergodic CP-distribution associated
to $\mathcal{F}$ as in Theorem \ref{thm:realizing-Assouad-dim-with-CP}.  For every $k\in\mathbb{N}$, taking $\varepsilon=1/k$ and large
$N_{k}$, choose dyadic cells $D_{k,1}\supseteq\ldots\supseteq D_{k,N_{k}}$
and a measure $\mu_{k}$ on $D_{k,1}$ as in that theorem, and choose
$x_{k}\in D_{k,N_{k}}\cap\mathcal{F}$ (hence in the intersection
of the cells).

Fix a $P$-typical $\mu$, so that $\dim\supp \mu \geq\dim\mu=\delta$. By Orponen's theorem \ref{Thm Orponen}, with the notation therein, the set of $\sigma\in\mathbb{R}$ such that
$\dim_{A}\pi_{\sigma}\supp \mu<\delta^{*}$ has dimension
zero. 

Choose any measure $\nu$ of positive dimension on $X$ (e.g. any non-trivial self-similar measure, or invoke Frostman's lemma). Positive-dimension of $\nu$ means that $\nu(E)=0$ for every set $E$
of dimension zero. So by the above, 
\begin{equation} \label{eq: def of sigma}
\dim_{A}\pi_{\sigma}\supp \mu \geq\delta^{*}\qquad\text{for \ensuremath{\nu}-a.e. \ensuremath{\sigma}}.
\end{equation}

Fix a $\nu$-typical $\sigma$ satisfying \eqref{eq: def of sigma}. Since $\sigma\in X$ there exists $\ii\in I^{\mathbb{N}}$ such that
$\varphi_{\ii|_{n}}(0)\rightarrow\sigma$ as $n\rightarrow\infty$,
where $\ii|_{n}$ is the $n$-prefix of $\ii$. Write $\sigma_{n}=\varphi_{\ii|_{n}}(0)$. By definition of $P$, we can choose $\widehat{D}_{k}\in\{D_{k,1},\ldots,D_{k,N_{k}}\}$
such that $\mu_{k}^{\widehat{D}_{k}}\rightarrow\mu$. Write $\widehat{D}_k\in\mathcal{D}_{n_{k}}$, and let $\ii_k$ be the prefix of $\ii$ corresponding to $D_k$. We can assume that $n_{k}\rightarrow\infty$. Pass to a subsequence if necessary to ensure that $\mu_{k}^{\widehat{D}_{k},\ii_{k}}$
converges to a measure $\widetilde{\mu}$.

By Proposition \ref{prop:limits-of-renormalized-measures}, $\widetilde{\mu}=\delta_{z}\times h\pi_{\sigma}\mu$
for some $z\in\mathbb{R}$ and $h\in\mathcal{H}$. By the choice of $\sigma$ and \eqref{eq: def of sigma}, we have 
$$\dim_{A}\supp\widetilde{\mu}\geq\delta^{*}.$$ 
Noting that $\supp\widetilde{\mu}\subseteq\mathcal{E}_{0}$
has constant contraction by definition, the proof is complete.
\end{proof}

\subsection{Proof of Theorem \ref{thm:main}}

Our goal is to prove

\begin{thm}\label{thm:technical}
Let $X, Y,\alpha,\beta_j,\Lambda$ be as in the introduction and satisfying the assumptions stated in Section \ref{SSSs-and-assumptions}. If $\Lambda$ satisfies Condition (\oR), then the conclusion of Conjecture \ref{conj:main} holds.
\end{thm}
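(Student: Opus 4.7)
The plan is to argue by contradiction. Assuming $\mathcal{E}\neq\emptyset$, set
\[
\delta^{*}:=\sup\{\dim_{A}\mathcal{F}:\mathcal{F}\subseteq\mathcal{E}\text{ bounded}\}.
\]
I will show $\delta^{*}<1$ by a direct application of Orponen's theorem, and $\delta^{*}\geq 1$ through the renormalization machinery combined with Condition (\oR), which together yield the desired contradiction.

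For the upper bound, apply Orponen's Theorem \ref{Thm Orponen} to any bounded $\mathcal{F}\subseteq\mathcal{E}$. Since $\pi_{\sigma}(a,b)=a\sigma+b$ evaluates $f=(a,b)\in\mathcal{E}$ at $\sigma$, whenever $\sigma\in X$ one has $\pi_{\sigma}(\mathcal{F})\subseteq Y$, hence $\dim_{A}\pi_{\sigma}(\mathcal{F})\leq\dim_{A}Y$. Because $\dim X>0$, Orponen's theorem supplies a $\sigma\in X$ with $\dim_{A}\pi_{\sigma}(\mathcal{F})\geq\min\{1,\dim_{A}\mathcal{F}\}$. Lemma \ref{lem: 2} identifies $\dim_{A}Y=\dim Y$, and $\dim Y<1$ because $Y$ is a non-trivial strongly-separated self-similar set (the disjoint cylinders force $\sum_j\beta_{j}<1$, so the similarity dimension is strictly $<1$). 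Combining these gives $\dim_{A}\mathcal{F}\leq\dim Y<1$, hence $\delta^{*}\leq\dim Y<1$.

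For the lower bound, Lemma \ref{lem: 1} supplies $\delta^{*}\geq\delta(\Lambda)>0$. The key is to show that if $0<\delta^{*}<1$ then $\delta^{*}$ can be strictly exceeded, contradicting its definition. Fix a small $\epsilon>0$ and choose $\mathcal{F}\subseteq\mathcal{E}$ with $\dim_{A}\mathcal{F}>\delta^{*}-\epsilon$. Theorem \ref{thm:transferring-dim-to-constant-scaling-set} produces a compact constant-scaling set $\mathcal{G}\subseteq\mathcal{E}_{0}$, of common scaling $a_{0}$, with $\dim_{A}\mathcal{G}\geq\delta^{*}-\epsilon$. Fix $f_{0}\in\mathcal{G}$ and apply Proposition \ref{prop:renomalization-at-point-gives-alpha-beta-set} to $f_{0}$ along the cells $D_{n}=\mathcal{D}_{n}(f_{0})$ relative to a fixed $\ii\in I^{\mathbb{N}}$: along a syndetic subsequence, the logarithmic scaling ratios $\theta_{n}$ of $M_{D_{n},\ii_{n}}(f_{0})$ form a syndetic subsequence of a $\Lambda$-multi-rotation. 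For each such $n$, the image $\mathcal{G}_{n}:=M_{D_{n},\ii_{n}}(\mathcal{G}\cap D_{n})\subseteq\mathcal{E}_{0}$ is a compact constant-scaling set with scaling $\alpha^{\theta_{n}}$; Proposition \ref{prop:explicit-renormalization-map} and the local nature of Assouad dimension at $f_{0}$ let one arrange $\dim_{A}\mathcal{G}_{n}\geq\delta^{*}-2\epsilon$ for $n$ in a positive upper-density set $U\subseteq\mathbb{N}$. Condition (\oR) then gives $\overline{\dim}_{B}\{\theta_{n}:n\in U\}\geq\eta$ for some $\eta>0$. Setting $\mathcal{F}':=\bigcup_{n\in U}\mathcal{G}_{n}\subseteq\mathcal{E}_{0}$, a fiber-type Assouad lower bound (the $\mathcal{G}_{n}$ lie on essentially distinct horizontal lines of scaling $\alpha^{\theta_{n}}$, each of dimension $\geq\delta^{*}-2\epsilon$) yields $\dim_{A}\mathcal{F}'\geq(\delta^{*}-2\epsilon)+\eta$, which exceeds $\delta^{*}$ for small enough $\epsilon$. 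This contradicts the definition of $\delta^{*}$, so $\delta^{*}\geq 1$, completing the argument.

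The main obstacle is the last step: showing that the slice dimension $\delta^{*}-2\epsilon$ and the horizontal spread $\eta$ combine additively into the Assouad dimension of $\mathcal{F}'$ rather than being absorbed by one another. This cannot be read off from a naive covering estimate on the union, and presumably requires constructing a measure on $\mathcal{F}'$ — via the CP-distribution formalism of Section \ref{subsec:Magnification-and-CP-chains} applied to the renormalized measures $\mu_{k}^{D_{k},\ii_{k}}$ of Proposition \ref{prop:limits-of-renormalized-measures} — whose marginals capture the scaling distribution and fiber structure separately, so that dimension additivity follows from a Marstrand-type slicing argument. This is precisely the place where the \emph{upper}-density form of Condition (\oR) is needed, since the natural recurrence times in the CP-chain form a set of positive upper (but not necessarily lower) density.
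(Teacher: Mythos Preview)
Your upper bound $\delta^{*}\leq\dim Y<1$ is correct and is the argument of Section~\ref{sec:Derivation-of-Theorem-half}. The lower-bound strategy is also right, but there are two real gaps, one you flag and one you do not. The unflagged one is the claim that ``the local nature of Assouad dimension at $f_{0}$'' gives $\dim_{A}\mathcal{G}_{n}\geq\delta^{*}-2\epsilon$ on a positive-density set of $n$: there is no such local nature. Assouad dimension is a supremum over \emph{all} centres and scale ratios, and an arbitrary $f_{0}\in\mathcal{G}$ may see nothing along $D_{n}(f_{0})$ (take $\mathcal{G}=\{f_{0}\}\cup K$ with $K$ bounded away from $f_{0}$). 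Proposition~\ref{prop:explicit-renormalization-map} only says $M_{D_{n},\ii_{n}}$ is affine; it does not control $\mathcal{G}\cap D_{n}$.

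The paper resolves both this gap and your flagged additivity obstacle by a single device. Instead of tracking the sets $\mathcal{G}\cap D_{n}$, it passes (Proposition~\ref{prop:boosting-assouad-to-hausdorff-on-const-contraction}) to an ergodic $\delta$-dimensional CP-distribution $P$ whose typical measures are supported on constant-scaling subsets of $\mathcal{E}_{0}$, fixes a $P$-generic $\mu$ and a $\mu$-typical $f$, and invokes the ergodic Theorem~\ref{thm:CP-equidistribution}: the sequence $\mu^{D_{n}(f)}$ equidistributes for $P$. This is what selects the correct base point and produces a set $U$ of density arbitrarily close to $1$ on which the magnified measures lie in a fixed set $\Omega$ with \emph{uniform} covering bounds $\cov(\supp\nu,1/k)\geq k^{\delta-\varepsilon_{k}}$ for all $k$. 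It is this uniformity across $n\in U$---not a per-fibre Assouad lower bound---that feeds into the elementary Lemma~\ref{Lemma covering} and yields $\overline{\dim}_{B}\bigl(\bigcup\supp\mu^{D_{n},\ii_{n}}\bigr)\geq\delta+\varepsilon$. Your instinct that the CP formalism is needed is correct, but it must enter earlier: it is what simultaneously manufactures the good base point and the uniform fibre estimates, rather than merely certifying additivity after the fact.
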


As explained in the introduction, this implies Theorem \ref{thm:main}.

Suppose by way of contradiction that $\mathcal{E}\neq\emptyset$, that is, that there exists an affine embeddings of $X$ into $Y$.

Write $$\delta:=\sup\{\dim_{A}\mathcal{F}\mid\mathcal{F}\subseteq\mathcal{E}\text{ is compact}\}.$$

\begin{lem}
$\delta>0$.
\end{lem}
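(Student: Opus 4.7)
The plan is to leverage Lemma \ref{lem: 1}, together with the Feng--Xiong lower bound $\delta(\Lambda) > 0$ mentioned in the introduction, and the trivial monotonicity of upper box dimension under projection and under the chain $\overline{\dim}_B \leq \dim_A$. Since we have assumed $\mathcal{E} \neq \emptyset$, Lemma \ref{lem: 1} supplies a compact set $\mathcal{F} \subseteq \mathcal{E}$ such that the projection $\mathcal{F}' = \{a : \exists b \text{ with } (a,b) \in \mathcal{F}\}$ onto the first coordinate satisfies $\overline{\dim}_B \mathcal{F}' \geq \delta(\Lambda)$.

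The first step is to observe that projection to a coordinate axis is $1$-Lipschitz, and hence cannot increase upper box dimension. This gives
\[
\overline{\dim}_B \mathcal{F} \geq \overline{\dim}_B \mathcal{F}' \geq \delta(\Lambda).
\]
Next, invoking the standard inequality $\overline{\dim}_B \mathcal{F} \leq \dim_A \mathcal{F}$ (valid for any bounded set), we deduce
\[
\dim_A \mathcal{F} \geq \delta(\Lambda).
\]

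Finally, by the Feng--Xiong bound quoted in the introduction, $\delta(\Lambda) > 1/(\operatorname{rank}_{\mathbb{Q}}\{\log \beta_j\} + 1) > 0$, so $\dim_A \mathcal{F} > 0$. Since $\mathcal{F}$ is a compact subset of $\mathcal{E}$, the supremum defining $\delta$ is at least $\dim_A \mathcal{F}$, which gives $\delta > 0$. There is no real obstacle here: the content is entirely in Lemma \ref{lem: 1} (which is standard amplification, also recoverable from Proposition \ref{prop:renomalization-at-point-gives-alpha-beta-set}) and in the Feng--Xiong positivity of $\delta(\Lambda)$; the lemma itself is just a one-line combination of these with the monotonicity properties of the various dimensions.
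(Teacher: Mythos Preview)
Your proof is correct and essentially identical to the paper's own argument: both invoke Lemma~\ref{lem: 1} (equivalently, the Feng--Xiong construction) to obtain a compact $\mathcal{F}\subseteq\mathcal{E}$ whose projection to the scaling coordinate has positive upper box dimension, then use that Lipschitz projections cannot increase box dimension and that $\overline{\dim}_B\leq\dim_A$. The only cosmetic difference is that the paper projects via $f\mapsto\log\|f\|$ rather than $f\mapsto a$, which is immaterial since $\log$ is bi-Lipschitz on compact subsets of $\mathcal{E}$.
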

\begin{proof}
Let $\pi:\mathcal{E}\rightarrow\mathbb{R}$ be map $f\mapsto\log\left\Vert f\right\Vert $.
By \cite{FengXiong2018} (see also Lemma \ref{lem: 1} and Proposition \ref{prop:renomalization-at-point-gives-alpha-beta-set}),
there exists a compact $\mathcal{F}\subseteq\mathcal{E}$
such that 
\[
\overline{\dim}_{B}\pi\mathcal{F}>0.
\]
Since $\pi$ is Lipschitz on Compact sets, we conclude that $\overline{\dim}_{B}\mathcal{\mathcal{F}}>0$, and so
 $\dim_{A}\mathcal{F}>0$.
\end{proof}

Write
\[
\delta^{*}=\min\{1,\delta\}.
\]
\begin{lem} \label{Lemma P}
There exists a $\delta^{*}$-dimensional CP-distribution
$P$ whose measures, up to a homothety taken from a compact set, are supported on $\mathcal{E}_{0}$ and have constant scaling. In particular, $\dim_{A} \mathcal{E}_{0} = \delta<1$.
\end{lem}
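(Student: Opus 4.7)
The plan is to cascade the two main results of this section: first apply Theorem \ref{thm:transferring-dim-to-constant-scaling-set} to extract a constant-scaling subset of $\mathcal{E}_0$ whose Assouad dimension matches $\delta^*$, then feed this into Proposition \ref{prop:boosting-assouad-to-hausdorff-on-const-contraction} to build the CP-distribution. The strict inequality $\delta<1$ will then drop out from the strong separation of $Y$.

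First I record the preliminary identity $\dim_A \mathcal{E}_0=\delta$. The bound $\dim_A \mathcal{E}_0 \leq \delta$ is immediate since $\mathcal{E}_0$ is itself a compact subset of $\mathcal{E}$. For the reverse, given any compact $\mathcal{F}\subseteq \mathcal{E}$ cover it by finitely many pre-compact dyadic cells and pick some cell $D$ with $\dim_A(\mathcal{F}\cap D)=\dim_A \mathcal{F}$; for any admissible $\ii$ the renormalization $M_{D,\ii}$ sends $\mathcal{F}\cap D$ into $\mathcal{E}_0$ and, by the explicit formula in Proposition \ref{prop:explicit-renormalization-map}, is bi-Lipschitz on the pre-compact cell $D$, so it preserves Assouad dimension. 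Taking the supremum over $\mathcal{F}$ gives $\dim_A \mathcal{E}_0 \geq \delta$.

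Next, applying Theorem \ref{thm:transferring-dim-to-constant-scaling-set} with $\mathcal{F}=\mathcal{E}_0$ yields a compact constant-scaling subset $\mathcal{F}^*\subseteq \mathcal{E}_0$ with $\dim_A \mathcal{F}^*\geq \delta^*$. Constant scaling confines $\mathcal{F}^*$ to a single horizontal line in $\mathcal{A}$, forcing $\dim_A \mathcal{F}^*\leq 1$; combined with $\dim_A \mathcal{F}^* \leq \dim_A \mathcal{E}_0 = \delta$ we obtain $\dim_A \mathcal{F}^* = \delta^*$. Since $\delta>0$ by the preceding lemma, also $\delta^*>0$, so Proposition \ref{prop:boosting-assouad-to-hausdorff-on-const-contraction} applies to $\mathcal{F}^*$ and produces the $\delta^*$-dimensional ergodic CP-distribution $P$ whose typical measures, up to a homothety taken from a compact family (this last point being implicit in that proposition's proof via the bounded sequence $H_k\to H$), are supported on constant-scaling subsets of $\mathcal{E}_0$.

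The main obstacle is ruling out $\delta\geq 1$, and this is where strong separation of $Y$ enters crucially. Suppose for contradiction that $\delta\geq 1$, so $\delta^*=1$. A $P$-typical $\mu$ is then exact $1$-dimensional, and for some homothety $H$ the measure $H\mu$ is supported on a horizontal line $\{(a_0,b) : b\in B\}\subseteq \mathcal{E}_0$. For any $f(x)=ax+b$ in $\mathcal{E}$ we have $b=f(0)\in Y$, using $0\in X$ and $f(X)\subseteq Y$, so $B\subseteq Y$. Since $H\mu$ is exact $1$-dimensional and supported on a horizontal line, its support (in bijection with $B$) has Hausdorff dimension at least $1$, hence $\dim Y\geq \dim B\geq 1$. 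This contradicts the standard fact that a non-trivial, strongly separated self-similar subset of $\mathbb{R}$ has Hausdorff dimension strictly less than $1$. Therefore $\delta<1$, completing the proof.
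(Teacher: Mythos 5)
Your proof is correct and cascades the same two results as the paper (Theorem \ref{thm:transferring-dim-to-constant-scaling-set} followed by Proposition \ref{prop:boosting-assouad-to-hausdorff-on-const-contraction}), with the same final contradiction argument ruling out $\delta^*=1$ via strong separation of $Y$. There are two minor departures worth noting. First, you establish $\dim_A\mathcal{E}_0=\delta$ up front by arguing that each renormalization map $M_{D,\ii}$ is a non-singular affine map of $\mathbb{R}^2$ (hence bi-Lipschitz), so it carries a suitable piece of any compact $\mathcal{F}\subseteq\mathcal{E}$ into $\mathcal{E}_0$ while preserving Assouad dimension. The paper instead gets only $\dim_A\mathcal{E}_0\geq\delta^*$ from the transfer theorem, and recovers the full equality $\dim_A\mathcal{E}_0=\delta$ as a bootstrap \emph{after} proving $\delta<1$ (so that $\delta^*=\delta$); both routes are valid, and yours front-loads an argument the paper chose to avoid, at the modest cost of having to justify bi-Lipschitzness of the renormalization. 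Second, in the contradiction step you evaluate maps at $x_0=0\in X$ to conclude directly that the translation coordinates $B=\{f(0):f\in\supp H\mu\}$ form a $1$-dimensional subset of $Y$; the paper evaluates at some $x_0\in X\setminus\{0\}$, which works equally well but is no cleaner. Your version is tidier and makes the inclusion in $Y$ explicit via $b=f(0)$.
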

\begin{proof}
It follows from Theorem \ref{thm:transferring-dim-to-constant-scaling-set} that we have $\dim_{A}\mathcal{E}_{0}\ge \delta^*$.
Since $\mathcal{E}_{0}$ is itself compact,  we may apply Theorem \ref{thm:transferring-dim-to-constant-scaling-set} 
to $\mathcal{E}_{0}$ to find a set $\mathcal{G}\subseteq\mathcal{E}_{0}$
of constant contraction with Assouad dimension $\delta^{*}$. Now, 
apply  Proposition \ref{prop:boosting-assouad-to-hausdorff-on-const-contraction} to $\mathcal{G}$,  to find the required $\delta^{*}$-dimensional CP-distribution.

If $\delta^{*}=1$ then $P$-a.e. measure $\mu$ is supported on a set of the form $\lbrace z \rbrace \times T$, where $\dim T=1$ (actually, $T$ can be shown to be an interval using Theorem \ref{thm:dim-of-CP}, but we don't need this). This is impossible since
then, for any $x_{0}\in X\setminus\{0\}$, the set $z\cdot x_0+T$ has dimension $1$ as an affine image of the set $T$, but is also a subset of $Y$. This  contradicts the strong separation assumption on $Y$.  Thus we must have $\delta^{*}=\delta<1$.  Since $\mathcal{E}_{0}$ is  compact,  by definition of $\delta$,  we have $\dim_{A}\mathcal{E}_{0}\le \delta$.  We conclude that $\dim_{A} \mathcal{E}_{0} = \delta$.
\end{proof}

We aim to derive a contradiction by showing that $\overline{\dim}_{B}\mathcal{E}_{0}>\delta$, which, by Lemma \ref{Lemma P}, cannot happen. The argument is
based on the following elementary observation:
\begin{lem} \label{Lemma covering}
Let $Z\subseteq\mathcal{E}$ and let $\pi:Z\rightarrow\mathbb{R}$
denote projection to the first coordinate. For $p\in Z$ write $Z_{p}=\pi^{-1}(\lbrace \pi(p) \rbrace)=\{q\in Z\mid\pi(p)=\pi(q)\}$.
Suppose that for some $\varepsilon_{k}\rightarrow0$ and $\gamma>0$,
\[
\cov(Z_{p},1/k)\geq k^{\gamma-\varepsilon_{k}}\qquad\text{for all }p\in Z\text{ and }k\geq0.
\]
Then
\[
\overline{\dim}_{B}Z\geq\gamma+\overline{\dim}_{B}\pi(Z)
\]
\end{lem}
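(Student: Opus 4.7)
\medskip

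The plan is to lower bound $\cov(Z, 1/(2k))$ by exhibiting a large $1/k$-separated subset of $Z$, built by stacking fiberwise-separated sets over a horizontally-separated skeleton in $\pi(Z)$.

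First I would fix $k$ and choose a maximal $(1/k)$-separated set $A_k \subseteq \pi(Z)$; by maximality the $1/k$-neighborhoods of $A_k$ cover $\pi(Z)$, so $|A_k| \geq \cov(\pi(Z), 1/k)$. For each $t\in A_k$ pick a point $p_t\in Z$ with $\pi(p_t)=t$, and let $B_t\subseteq Z_{p_t}$ be a maximal $(1/k)$-separated subset of the fiber; again by maximality together with the hypothesis, $|B_t|\geq\cov(Z_{p_t},1/k)\geq k^{\gamma-\varepsilon_k}$.

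Next I would assemble $B:=\bigcup_{t\in A_k}B_t\subseteq Z$ and verify that it is a $(1/k)$-separated subset of $\mathbb{R}^2$: two points in the same $B_t$ share their first coordinate and are separated in the second by $\geq 1/k$, while two points in $B_t$ and $B_{t'}$ with $t\neq t'$ have first coordinates at distance $\geq 1/k$ by construction of $A_k$. A $(1/k)$-separated set meets any ball of radius $1/(2k)$ in at most one point, so
\[
\cov(Z,1/(2k))\;\geq\;|B|\;=\;\sum_{t\in A_k}|B_t|\;\geq\;|A_k|\cdot k^{\gamma-\varepsilon_k}.
\]

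Finally I would take logarithms, divide by $\log(2k)$, and pass to a subsequence $k_n\to\infty$ along which $\log|A_{k_n}|/\log k_n\to\overline{\dim}_B\pi(Z)$ (such a subsequence exists because maximal $(1/k)$-separated sets have size comparable to the covering numbers of $\pi(Z)$, and thus realize its upper box dimension). Since $\varepsilon_k\to 0$ and $\log k/\log(2k)\to 1$, the right-hand side tends to $\overline{\dim}_B\pi(Z)+\gamma$, while the limsup of the left-hand side is at most $\overline{\dim}_B Z$ by definition. This yields the desired inequality. There is no real obstacle here; the only slightly delicate point is the bookkeeping between separated sets and covering numbers (the factor of $2$ between scales $1/k$ and $1/(2k)$), which is absorbed harmlessly in the limit.
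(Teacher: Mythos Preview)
The paper does not actually supply a proof of this lemma; it is stated as an ``elementary observation'' and then used immediately. Your argument is correct and is exactly the standard slicing/counting proof one would expect: build a large separated set in $Z$ as a union of fiberwise separated sets over a separated set in the base, then pass to the limit along a subsequence realizing $\overline{\dim}_B\pi(Z)$. The only cosmetic point is the borderline between a $(1/k)$-separated set and covers by radius-$1/(2k)$ balls (diameter exactly $1/k$); replacing $1/(2k)$ by, say, $1/(3k)$ removes any ambiguity and changes nothing in the limit, as you note.
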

Lemma \ref{Lemma covering} leads to the desired contradiction via the following argument. Let $P$ be the CP-distribution from Lemma \ref{Lemma P}, let $\mu$ be a $P$-generic measure (in the sense of say Theorem \ref{thm:CP-equidistribution}), and let $f$ be a $\mu$-typical map. Note that   $f\in \mathcal{E}_{0}$, and write $D_{n}=\mathcal{D}_{n}(f)$.

By Theorem \ref{thm:CP-equidistribution}, the sequence $\mu^{D_{n}}$
equidistributes for $P$.  Let $\ii \in I^{\mathbb{N}}$ be any fixed sequence, and let $\ii_{n}$ denote the appropriate sub-words so that renormlization is well defined. Then the measures $\mu^{D_{n},\ii_{n}}$
are equal to $\mu^{D_{n}}$ up to homotheties bounded in terms of
$f$ (by e.g. Proposition \ref{prop:explicit-renormalization-map}). But also, by Proposition \ref{prop:renomalization-at-point-gives-alpha-beta-set},
on a syndetic subsequence $n_{i}$ of $n$, the logarithms $\theta_{n}$
of the scaling constants corresponding to $\mu_{n}^{D_{n},\ii_{n}}$
form a syndetic subsequence of an $\Lambda$-multi-rotation, with
$\Lambda=\{\log\beta_{j}/\log\alpha\}_{j\in J}$.

We now claim that
\begin{equation} \label{eq: box dim increase}
\overline{\dim}_{B}(\bigcup_{i}\supp\mu^{D_{n_{i}},\ii_{n_{i}}})\geq\delta+\varepsilon \text{ for some } \varepsilon>0.
\end{equation} 
We will use Lemma \ref{Lemma covering} to do this, where $\delta$ will play the role of $\gamma$. 

Since $P$-a.e. $\nu$ has dimension $\delta$, the support of $P$
-a.e. such $\nu$ has box dimension at least $\delta$. It follows
that there exists a set $\Omega$ of $P$-measure arbitrarily close
to $1$, and $\varepsilon_{k}\rightarrow0$, such that 
\[
\cov(\supp\nu,1/k)>(1/k)^{-(\delta-\varepsilon_{k})}\qquad\text{for all \ensuremath{\nu\in\Omega}}, k\in \mathbb{N}.
\]
Sine $\mu$ is $P$-generic, we may assume the set
\[
U=\{n\in\mathbb{N}\mid\mu^{D_{n}}\in\Omega\}
\]
has density arbitrarily close to $1$.

It follows from syndeticity of $n_{i}$ that $\{n_{i}\}\cap U$ has
positive density. Therefore by Condition (\oR) on $\Lambda$, 
$$\varepsilon:=\overline{\dim}_{B}\{\theta_{n_{i}}\mid n_{i}\in U\}>0.$$
By Lemma \ref{Lemma covering}, 
$$\overline{\dim}_{B} \left( \bigcup_{i\mid n_{i}\in U}\supp\mu^{D_{n_{i}},\ii_{n_{i}}} \right)\geq\delta+\varepsilon.$$
This prove \eqref{eq: box dim increase}, which, combined with Lemma \ref{Lemma P}, is the desired contradiction.

\section{\label{sec:remaining_proofs}Proof of Theorems \ref{thm:algebraic}, \ref{thm:nonhom} and \ref{thm:generic}}

\subsection{Preliminaries}

Let $X=\bigcup_{i\in I}\varphi_{i}X$ and $Y=\bigcup_{j\in J}\psi_{j}(Y)$ 
be self-similar sets as before and assume for notational convenience that $I=\{1,\ldots,M\}$,
$J=\{1,\ldots,L\}$. For non-trivial $\uu\in I^{*}$ let $n(\uu)\in\mathbb{Z}^{M}$
denote the non-negative, non-zero vector which counts the number of
occurrences of each symbol: 
\[
n(\uu)_{i}=\#\text{ of appearences of \ensuremath{i} in \ensuremath{\uu}}.
\]
Thus $\left\Vert \varphi_{\uu}\right\Vert =\prod_{i\in I}\alpha_{i}^{n(\uu)_{i}}$
and $\log\left\Vert \varphi_{\uu}\right\Vert =\sum_{i\in I}n(\uu)_{i}\log\alpha_{i}$.
\begin{lem}
\label{lem:sub-IFSs}Suppose that $\log\alpha_{r}\notin\spn_{\mathbb{Q}}\{\log\beta_{j}\}$
for some $r\in I$. Then there exist $\uu^{(1)},\ldots,\uu^{(M)}\in I^{*}$ and $\vv^{(1)},\ldots,\vv^{(M)}\in I^{*}$
such that:
\begin{enumerate}
\item $n(\uu^{(1)}),\ldots,n(\uu^{(M)})$ are linearly independent over $\mathbb{Q}$.
\item $\log\left\Vert \varphi_{\uu^{(i)}}\right\Vert=\log\left\Vert \varphi_{\vv^{(i)}}\right\Vert \notin\spn_{\mathbb{Q}}\{\log\beta_{j}\}$.
\item $\varphi_{\uu^{(i)}},\varphi_{\vv^{(i)}}$ have no common fixed point.
\end{enumerate}
\end{lem}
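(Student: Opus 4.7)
The hypothesis translates directly into linear algebra. Let $V = \spn_\mathbb{Q}\{\log\beta_j\}_{j\in J} \subseteq \mathbb{R}$ (viewed as a $\mathbb{Q}$-vector space) and define the $\mathbb{Q}$-linear map
\[
T \colon \mathbb{Q}^M \longrightarrow \mathbb{R}/V, \qquad T(v) = \sum_{i\in I} v_i \log\alpha_i \pmod V.
\]
By hypothesis $T(e_r) \neq 0$, so $K := \ker T$ is a \emph{proper} $\mathbb{Q}$-subspace of $\mathbb{Q}^M$. For any word $\uu \in I^*$, condition (2) of the lemma is equivalent to $n(\uu) \notin K$, since $\log\|\varphi_\uu\| = \sum_i n(\uu)_i \log\alpha_i$.

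Non-triviality of $X$ gives $a, b \in I$ for which $\varphi_a$ and $\varphi_b$ have distinct fixed points. A direct computation shows that two affine maps with the same scaling ratio share a fixed point only if they coincide, and that for a common scaling ratio $\varphi_a\varphi_b = \varphi_b\varphi_a$ iff $\varphi_a, \varphi_b$ have a common fixed point. Hence $\varphi_a\varphi_b \neq \varphi_b\varphi_a$. This non-commutativity is the engine of the construction: if $\uu$ is any word containing the substring $ab$ and $\vv$ is obtained by swapping this to $ba$, then $n(\vv) = n(\uu)$ (so the scaling ratios and log-norms are equal), but $\varphi_\uu \neq \varphi_\vv$; equal scaling then forces distinct fixed points, giving condition (3).

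It remains to produce $M$ linearly independent integer vectors $n^{(1)}, \ldots, n^{(M)} \in \mathbb{Z}^M$, each outside $K$ and with $a$- and $b$-coordinates at least $1$. I build them inductively. At step $i$, the forbidden set $K \cup \spn_\mathbb{Q}\{n^{(1)}, \ldots, n^{(i-1)}\}$ is a finite union of proper $\mathbb{Q}$-subspaces of $\mathbb{Q}^M$, so it cannot cover the affine-dense integer set $(e_a + e_b) + \mathbb{Z}_{\ge 0}^M$; pick $n^{(i)}$ from the complement. For each $i$, realize $n^{(i)}$ as the count vector of a word $\uu^{(i)}$ in which $a$ and $b$ occur as an adjacent pair $ab$ (possible because $n^{(i)}_a, n^{(i)}_b \ge 1$), and let $\vv^{(i)}$ be the word obtained from $\uu^{(i)}$ by swapping that $ab$ to $ba$.

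Conditions (1) and (2) hold by construction of the $n^{(i)}$, and (3) follows from the non-commutativity observation in the second paragraph, writing $\varphi_{\uu^{(i)}} = \varphi_A \varphi_a \varphi_b \varphi_B$ and $\varphi_{\vv^{(i)}} = \varphi_A \varphi_b \varphi_a \varphi_B$ for the appropriate prefix/suffix words $A, B$. There is no real obstacle beyond recognizing this swap as the natural way to meet (3) while preserving (1) and (2); the remainder is routine linear algebra exploiting that $K$ is a proper $\mathbb{Q}$-subspace.
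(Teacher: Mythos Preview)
Your proof is correct and shares the paper's key device for condition (3): pick $a,b\in I$ (the paper's $p,q$) with distinct fixed points, and obtain $\vv^{(i)}$ from $\uu^{(i)}$ by swapping an adjacent pair $ab\mapsto ba$, so that $n(\vv^{(i)})=n(\uu^{(i)})$ while $\varphi_{\uu^{(i)}}\neq\varphi_{\vv^{(i)}}$; equal contraction then forces distinct fixed points.

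The difference lies in how (1) and (2) are arranged. The paper writes down explicit words $\uu^{(i)}=(pq)\,i^N r^{\epsilon_i}$ with $\epsilon_i\in\{1,2\}$: the choice of $\epsilon_i$ secures (2) (otherwise the difference $\log\alpha_r$ would lie in $\spn_{\mathbb{Q}}\{\log\beta_j\}$), and taking $N$ large makes the $Ne_i$ term dominate the count vector, giving (1). Your route is more abstract: having recast (2) as the condition $n(\uu)\notin K$ for the proper subspace $K=\ker T$, you inductively pick count vectors in $(e_a+e_b)+\mathbb{Z}_{\ge 0}^M$ avoiding the finite union of $K$ with the span of previous choices (which a counting argument shows is always possible), and then realize each as a word beginning with $ab$. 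Your argument makes the linear-algebraic content transparent and avoids any case analysis on whether $i\in\{p,q,r\}$; the paper's is shorter and yields explicit words.
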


\begin{proof}
Let $r\in I$ be as in the statement and let $p,q\in I$ be such that
$\varphi_{p},\varphi_{q}$ don't commute, or equivalently, have different fixed points.\footnote{Under our assumption of strong separation any distinct $p,q$ will do here} Fix $i$ and for a large $N$, consider the
words $\uu^{(i,1)}=(pq)i^{N}r$ and $\uu^{(i,2)}=(pq)i^{N}r^{2}$.
At least one of $\log\left\Vert \varphi_{\uu^{(i,1)}}\right\Vert $,
$\log\left\Vert \varphi_{\uu^{(i,1)}}\right\Vert $ does not belong
to $\spn_{\mathbb{Q}}\{\log\beta_{j}\}$. Indeed, otherwise their difference
$\log\alpha_{r}$ would belong to this space, contrary to assumption.
Define $\uu^{(i)}\in\{\uu^{(i,1)},\uu^{(i,2)}\}$ with this
property. This gives the part of (2) relating to $\uu$, and it is easy to check that $n(\uu^{(i)})$
satisfy (1) if $N$ is large enough, which we assume. Finally, given $i\in I$, let $\vv^{(i)}$ be the same
as $\uu^{(i)}$ but with the initial $pq$ replaced by $qp$. 
The scaling constants of $\varphi_{\uu^{(i)}},\varphi_{\vv^{(i)}}$
are the same, so (2) holds in full, and by choice of $p,q$ they have no common fixed point, which is (3).
\end{proof}
We shall refer to two Diophantine conditions:
\begin{defn}
A real sequence $\gamma_{1},\ldots,\gamma_{k}\in\mathbb{R}$ satisfies 
\begin{itemize}
\item Condition (D), if there exists $c>0$ such that for all $2\leq N\in\mathbb{N}$
and every choice of integers $-N\leq n_{i}\leq N$ that are not all
zero, $|\sum n_{i}\gamma_{i}|>N^{-c}$. 
\item Condition (d), if there exists $c>0$ such that for infinitely many 
$N\in\mathbb{N}$ and every choice of integers $0\leq n_{i}\leq N$
that are not all zero, $|\sum n_{i}\gamma_{i}|>N^{-c}$. 
\end{itemize}
\end{defn}

\begin{rem} Notice that
\begin{enumerate}
\item Condition (D) is stronger than (d), as it allows combinations
of $\gamma_{i}$ using coefficients of mixed signs, and requires the inequality to hold for all $N\geq 2$ rather than arbitrarily large $N$.
\item Condition (D) implies that $\gamma_1,\ldots\gamma_M$ are  $\mathbb{Q}$-linearly independent. This is not implied by (d) since it could be that all non-trivial vanishing linear combinations have some negative coefficients. For the same reason, repetitions of elements in the sequence $\gamma_i$ are irrelevant, so we can apply Condition (d) to sequences or to the set of elements of a sequence interchangeably.
\item  Note that in this Definition we have not reduced mod 1.  

\end{enumerate}
\end{rem}
The following Lemma relates Condition (d) to Condition (\uR).
\begin{lem} \label{lem:d-implies-R}
Let $\Lambda=\Lambda(X,Y)$ be defined in the setting of Theorem \ref{thm:main}. If $\{-1\}\cup\Lambda$ satisfies Condition (d) (equivalently,  if $\log\alpha,-\log\beta_1,\ldots,-\log\beta_L$  satisfy Condition (d)), then $\Lambda$ satisfies Condition (\uR).
\end{lem}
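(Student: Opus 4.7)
The plan is a proof by contradiction that combines telescoping of the multi-rotation with a pigeonhole argument at scales where Condition (d) is active. Suppose $\Lambda$ fails (\uR): there exist a $\Lambda$-multi-rotation $(\theta_i)_{i\ge 1}$ and a set $U \subseteq \mathbb{N}$ with $\underline{d}(U) = \delta > 0$ and $\overline{\dim}_B\{\theta_i : i \in U\} = 0$. Telescoping the relation $\theta_{k+1} - \theta_k \in \Lambda \pmod 1$, for any $i < i'$ one has
\[
\theta_{i'} - \theta_i \equiv \sum_{j \in J} n_j \, \frac{\log\beta_j}{\log\alpha} \pmod 1,
\]
where $n_j \in \mathbb{Z}_{\ge 0}$ tallies how often the increment $\log\beta_j/\log\alpha$ appears in going from step $i$ to step $i'$, and $\sum_j n_j = i' - i$. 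Because $\log\alpha < 0$ and $\log\beta_j < 0$, every summand is non-negative, so the nearest integer to the right-hand side is some $m \ge 0$, and
\[
\|\theta_{i'} - \theta_i\|_{\mathbb{R}/\mathbb{Z}} = \Bigl| -m + \sum_{j \in J} n_j \, \frac{\log\beta_j}{\log\alpha} \Bigr|.
\]

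I would next fix $c > 0$ and the infinite scale-set $\mathcal{N} \subseteq \mathbb{N}$ supplied by Condition (d) for $\{-1\} \cup \Lambda$, together with an exponent $\epsilon \in (0, 1/c)$. Given a large $N^* \in \mathcal{N}$, set $M = \lfloor N^* / (C + 2) \rfloor$ where $C = \max_j \log\beta_j / \log\alpha > 0$; then for any pair $i < i' \le M$, the coefficients produced above satisfy $n_j \le M$ and $m \le C\sum_j n_j + 1 \le CM + 1$, so $(m, n_1, \ldots, n_L)$ lies in $[0, N^*]^{L+1}$. Positive lower density of $U$ gives $|U \cap [1, M]| \ge (\delta/2) M$ for $M$ large, while the zero-box-dimension hypothesis provides a covering of $\{\theta_i : i \in U\}$ by $\le r^{-\epsilon}$ intervals of length $r$ for $r$ small. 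Choosing $r_M$ of order $M^{-1/\epsilon}$ forces, by pigeonhole, a pair of indices $i < i' \le M$ in $U$ with $\|\theta_{i'} - \theta_i\|_{\mathbb{R}/\mathbb{Z}} < 2 r_M$.

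Feeding this pair into the telescoped identity produces non-negative integers $(m, n_1, \ldots, n_L) \in [0, N^*]^{L+1}$, not all zero since $i' > i$ forces some $n_j \ge 1$, satisfying
\[
\Bigl| -m + \sum_{j \in J} n_j \, \frac{\log\beta_j}{\log\alpha} \Bigr| < 2 r_M \;\asymp\; (N^*)^{-1/\epsilon}.
\]
On the other hand, Condition (d) at the scale $N^* \in \mathcal{N}$ supplies the lower bound $(N^*)^{-c}$ for precisely this quantity, and since we arranged $1/\epsilon > c$, the two are incompatible once $N^*$ is sufficiently large in $\mathcal{N}$, yielding the desired contradiction. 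The main subtlety I expect to verify carefully is the sign bookkeeping: Condition (d) only controls combinations with non-negative integer coefficients, and it is essential that $\log\beta_j/\log\alpha > 0$ (both logs negative) so that the telescoped sum is non-negative, forcing the nearest integer $m$ to be $\ge 0$. A minor technical point is aligning the pigeonhole scale $M$ with the Condition-(d) scale $N^*$ so that all resulting coefficients fit inside $[0, N^*]$; the choice $M = \lfloor N^*/(C+2) \rfloor$ achieves this uniformly.
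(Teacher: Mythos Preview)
Your proof is correct and follows essentially the same approach as the paper: telescope the multi-rotation to express $\|\theta_{i'}-\theta_i\|_{\mathbb{R}/\mathbb{Z}}$ as $|-m+\sum_j n_j\sigma_j|$ with non-negative integer coefficients bounded linearly in $i'-i$, then use Condition (d) at the appropriate scale to separate points. The only difference is packaging: the paper argues directly, showing that for infinitely many $N$ the first $N$ terms are pairwise $(\sigma N)^{-c}$-separated and hence any positive-density subsequence has upper box dimension at least $1/c$; you reach the same conclusion by contradiction via pigeonhole, which is equivalent but forfeits the explicit quantitative bound.
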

\begin{proof}
Write $\sigma_j=\log\beta_j/\log\alpha$ so that $\Lambda=\{\sigma_j\}$ and let $d(\cdot,\cdot)$ denote distance modulo one. Let $\sigma=\max\{1,\sigma_1,\ldots\sigma_L\}$. If $(\theta_n)_{n=1}^\infty$ is any $\Lambda$-multi-rotation, then for every $k>\ell$ there are integers $0\leq n_1,\ldots,n_L\leq k-\ell$ and $0\leq n \leq \sigma(k-\ell)$ such that \[
   d(\theta_k,\theta_\ell) =|-n+\sum_{j=1}^L n_j\sigma_j|,\]
where the $-n$ term reduces modulo one. Therefore, if $-1,\sigma_1,\ldots,\sigma_L$ satisfy Condition (d), then for infinitely many $N\in \mathbb{N}$, the distance between any two terms in $\theta_1,\ldots,\theta_N$ is at least $(\sigma N)^{-c}$. Then the $(\sigma N)^{-c}$-covering number of any $\varepsilon$-fraction of these points is $\varepsilon N$, showing that the upper box dimension of any positive-density subsequence of $(\theta_n)$ is at least $1/c$.
\end{proof}

We note that, if we modified Condition (d) to require the inequality for every $N$ rather than arbitrarily large $N$, then the argument in the lemma above would give the stronger Condition (\uR).

As is evident from the remarks above, the signs of the numbers we work with will be significant. To avoid confusion, we work with non-negative parameters where possible, and in particular we adopt the convention that $\gamma_i,\lambda_j$ always represent non-negative numbers. In our notation for self-similar sets we will use the dictionary  $\gamma_i=-\log\alpha_i$ and $\lambda_j=-\log\beta_j$.

\subsection{Proof of Theorem \ref{thm:algebraic}}

In our terminology, Baker's theorem can be stated as follows \cite{Baker1972}:
\begin{thm}
If $\sigma_{1},\ldots,\sigma_{M}$ are algebraic and $\log\sigma_{1},\ldots,\log\sigma_{M}$
are $\mathbb{Q}$-linearly independent, then $\{\log\sigma_{1},\ldots,\log\sigma_{M}\}$
satisfies Condition (D).
\end{thm}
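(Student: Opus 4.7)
This is Baker's celebrated theorem on lower bounds for linear forms in logarithms of algebraic numbers, and I would not expect to find a shortcut around the Gelfond--Baker transcendence method; my plan is essentially to reproduce that method and keep careful track of the quantitative dependence on $N$. The statement to prove is that there is some $c>0$, depending only on the $\sigma_i$ and their degrees and heights, such that for every $N\geq 2$ and every nonzero integer tuple $(n_1,\ldots,n_M)$ with $|n_i|\leq N$, the linear form $\Lambda:=\sum_i n_i\log\sigma_i$ satisfies $|\Lambda|>N^{-c}$.

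The plan is to argue by contradiction. Assume that $|\Lambda|$ is much smaller than any prescribed polynomial in $N$. After reordering we may take $n_M\neq 0$, set $\beta_i=-n_i/n_M$ for $i<M$, and note that $\sigma_M=\sigma_1^{\beta_1}\cdots\sigma_{M-1}^{\beta_{M-1}}\cdot e^{\Lambda/n_M}$, so $\sigma_M$ is extremely close to the algebraic expression $\prod_{i<M}\sigma_i^{\beta_i}$. The heart of the argument is the construction of a non-trivial auxiliary entire function of the form
\[
\Phi(z)=\sum_{\lambda_0,\ldots,\lambda_{M-1}=0}^{L} p(\lambda)\,\sigma_1^{(\lambda_0+\lambda_1\beta_1)z}\sigma_2^{\lambda_2 z}\cdots\sigma_{M-1}^{\lambda_{M-1}z},
\]
with integer coefficients $p(\lambda)$ of moderate size. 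Siegel's lemma lets us choose the $p(\lambda)$, not all zero, so that $\Phi$ together with sufficiently many of its derivatives vanishes at the integer points $z=1,\ldots,h$ for suitable parameters $L$ and $h$.

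The next step is the extrapolation. Using the smallness of $\Lambda$ together with the Schwarz (maximum modulus) lemma on disks of growing radius, one shows that $\Phi^{(k)}(r)$ is very small at further integer points $r$; since these values are algebraic of controlled degree and height, a Liouville-type lower bound forces them to vanish exactly whenever our analytic estimates are small enough. Iterating the extrapolation keeps enlarging the set of vanishing conditions on the $p(\lambda)$, until eventually the system is overdetermined and forces $p\equiv 0$, contradicting the Siegel step.

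The main obstacle, and the reason Baker's theorem is a deep result rather than a formal manipulation, is the quantitative bookkeeping. The parameters $L$, $h$, the disk radii, and the number of extrapolation rounds must be balanced against $N$, the degrees of the $\sigma_i$, and their heights, so that the chain of estimates closes up with an explicit exponent $c$. It is precisely this quantitative coupling that upgrades the non-vanishing statement $\Lambda\neq 0$ to the polynomial lower bound $|\Lambda|>N^{-c}$ required by Condition (D); the rest of the structure of the proof is more or less formal once the parameters are chosen correctly.
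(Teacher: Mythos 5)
The paper does not prove this statement: it is a restatement of Baker's theorem on linear forms in logarithms of algebraic numbers, and the paper simply cites \cite{Baker1972} and treats it as a black box. You correctly identify it as such, and your sketch --- auxiliary function built from $\sigma_i^{\lambda_i z}$ with integer coefficients chosen via Siegel's lemma, extrapolation of zeros by the Schwarz lemma combined with Liouville-type lower bounds, and a final zero estimate forcing the coefficients to vanish --- is a fair high-level outline of the Gelfond--Baker method. It is not, and could not reasonably be, a complete proof: as you note yourself, the substance of Baker's theorem lies entirely in the quantitative coupling of the parameters $L$, $h$, the number of extrapolation steps, the disk radii, and the heights and degrees of the $\sigma_i$, and none of that bookkeeping is carried out here. (Minor point: the precise shape of the auxiliary function you write is not quite the one Baker uses, but the variations between his papers make this a matter of bookkeeping rather than principle.) For the purposes of this paper the correct move is exactly what the authors do --- quote the result --- so there is no discrepancy to resolve; your sketch just confirms you know where the theorem comes from and why it is deep.
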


Theorem \ref{thm:algebraic} in now proved as follows.  Suppose that $X,  Y,\{\alpha_{i}\}_{i=1}^M,\{\beta_{j}\}_{j=1}^L$
are as in the statement of Theorem \ref{thm:algebraic}, so all $\alpha_{i},\beta_{j}$
are algebraic and $\log\alpha_{i}\notin\spn_{\mathbb{Q}}\{\beta_{j}\}$
for some $i$. We want to show that $\mathcal{E}=\emptyset$.

By Lemma \ref{lem:sub-IFSs}, we can assume that $X$ is homogeneous,
i.e.   $\alpha_{i}=\alpha$ for all $i$. For otherwise, let $\uu^{(1)},\vv^{(1)}$
be as in the lemma, let $X'\subseteq X$ be the self-similar set defined
by $\varphi_{\uu^{(1)}},\varphi_{\vv^{(1)}}$, and note that the scaling
constants of these maps are products of the original $\alpha_{1},\ldots,\alpha_{M}$,
so they are also algebraic. Thus if we knew the statement for homogeneous
$X$ with algebraic scaling constants we would deduce that $\mathcal{E}(X,Y)\subseteq\mathcal{E}(X',Y)=\emptyset$.

Thus, assume homogeneity. Set $\gamma=-\log\alpha$ and $\lambda_j=-\log\beta_j$, so we are assuming also $\gamma\notin\spn_{\mathbb{Q}}\{\lambda_{j}\}$.
To derive $\mathcal{E}=\emptyset$, we wish to prove that Condition
(\uR) holds. By Lemma \ref{lem:d-implies-R}, it suffices to show that $\{-\gamma,\lambda_{1},\ldots,\lambda_{L}\}$
satisfies Condition (d). This would follow directly from Baker's theorem
if $\{-\gamma,\lambda_{1},\ldots,\lambda_{L}\}$ were $\mathbb{Q}$-linearly
independent, but with our assumptions, we cannot rule out linear relations
between the $\lambda_{j}$. To remedy this, choose a maximal linearly independent subset of the  
 $\lambda_{j}$'s, and note that if $|-n\gamma+\sum_{j=1}^{L}n_{j}\lambda_{j}|$ is sufficiently
small and $0\leq n,n_{j}\leq N$ not all zero, by sign considerations we
must have $n\neq0$. Then we can re-write $\sum_{j=1}^{L}n_{j}\lambda_{j}$
in terms of the chosen basis. Now, even if this sum vanishes, we still
get a non-trivial (because $n\neq0$) linear combination of $\gamma,\lambda_{1},\ldots,\lambda_{L}$,
with rational coefficients whose numerator and denominator are in
the range $[-CN^{2},CN^{2}]$ for some $C$ that depends only on the
$\lambda_{j}$. Baker's bound for the new expression implies it for the
original one, possibly with a larger exponent.  This proves that condition (d) holds, and so by Lemma \ref{lem:d-implies-R} we have Condition (\uR); The result now follows from Theorem \ref{thm:main}.

\subsection{Proof of Theorem \ref{thm:nonhom}}
The following Lemma is the key for deriving Theorem \ref{thm:nonhom} from Theorem \ref{thm:main}.
\begin{lem} \label{Lemma nonhom}
Let $\gamma_{1},\ldots,\gamma_{M}>0$ satisfy Condition
(D). If $\rank_{\mathbb{Q}}\{\gamma_{i}\}_{i=1}^M>L$, then for every
$\lambda_{1},\ldots,\lambda_{L}>0$ there exists $i\in\{1,\ldots,M\}$
such that $-\gamma_{i},\lambda_{1}\ldots\lambda_{L}$ satisfy Condition
(d).
\end{lem}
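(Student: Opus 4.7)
The plan is to argue by contradiction, combining $L+1$ simultaneous bad approximations into a single integer linear combination that violates Condition (D) for $\gamma_1,\ldots,\gamma_M$. Using the rank hypothesis I choose indices, say $1,\ldots,L+1$ after relabeling, so that $\gamma_1,\ldots,\gamma_{L+1}$ are $\mathbb{Q}$-linearly independent, and suppose for contradiction that for every $i\in\{1,\ldots,L+1\}$ the sequence $-\gamma_i,\lambda_1,\ldots,\lambda_L$ fails Condition (d).

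Fix a large constant $c$ to be chosen later. By the negation of Condition (d), for every sufficiently large $N$ and every $i$ in the above range there exist integers $0\le n_0^{(i)},n_1^{(i)},\ldots,n_L^{(i)}\le N$, not all zero, with $\bigl|{-}n_0^{(i)}\gamma_i+\sum_{j=1}^{L}n_j^{(i)}\lambda_j\bigr|\le N^{-c}$. A preliminary observation is that $n_0^{(i)}\ge 1$: otherwise $\sum_j n_j^{(i)}\lambda_j$ would be a non-trivial non-negative integer combination of positive numbers, and thus at least $\min_j\lambda_j>0$, contradicting $N^{-c}$-smallness for large $N$.

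The key step then exploits that the $L+1$ vectors $v_i=(n_1^{(i)},\ldots,n_L^{(i)})\in\mathbb{Z}^L$ must be $\mathbb{Q}$-linearly dependent. Via Siegel's lemma (or Cramer's rule applied to a maximal linearly independent subcollection) I obtain integers $a_1,\ldots,a_{L+1}$, not all zero, with $\sum_i a_i v_i=0$ and $|a_i|\le C N^{L}$ for some constant $C=C(L)$. Taking this combination of the $L+1$ approximations cancels the $\lambda$-terms exactly and leaves
\[
\Bigl|\sum_{i=1}^{L+1}a_i n_0^{(i)}\gamma_i\Bigr|\le (L+1)C\,N^{L-c}.
\]
Since $n_0^{(i)}\ge 1$ for every $i$ and some $a_i$ is non-zero, the integer coefficients $a_in_0^{(i)}$ are not all zero, and satisfy $|a_in_0^{(i)}|\le C N^{L+1}$. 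Padding with zero coefficients on $\gamma_{L+2},\ldots,\gamma_M$ and applying Condition (D) at scale $N':=\lceil C N^{L+1}\rceil$ yields the lower bound $(N')^{-c_D}$ on the same quantity, where $c_D$ is the exponent from Condition (D). Choosing $c$ sufficiently large relative to $L$ and $c_D$ makes these two bounds incompatible for all large $N$, giving the contradiction.

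The main obstacle I anticipate is purely bookkeeping: verifying the Siegel-type bound on $|a_i|$ and tracking polynomial dependencies on $N$ and $c$ carefully enough to determine how large $c$ must be. The conceptual picture is clean: $L+1$ approximate dependencies of the $\gamma_i$'s modulo the $L$-dimensional $\mathbb{Q}$-span of the $\lambda_j$'s force, via Siegel's lemma, a single approximate $\mathbb{Q}$-linear dependency among the $\gamma_i$'s with controlled integer coefficients, which Condition (D) rules out.
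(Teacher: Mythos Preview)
Your proposal is correct and follows essentially the same route as the paper: obtain $L+1$ simultaneous near-relations $-n_0^{(i)}\gamma_i+\sum_j n_j^{(i)}\lambda_j\approx 0$, use that the $L+1$ vectors $(n_1^{(i)},\ldots,n_L^{(i)})\in\mathbb{Z}^L$ are $\mathbb{Q}$-dependent with integer combining coefficients of size $O(N^L)$ (Cramer/Siegel), eliminate the $\lambda_j$'s, and contradict Condition~(D). The only cosmetic differences are that the paper keeps all $M$ indices rather than passing to $L+1$ independent ones, and phrases the argument as showing $\bigcap_i\Omega_{N,i}=\emptyset$ for each fixed $N$ before pigeonholing, whereas you negate Condition~(d) directly; neither changes the substance.
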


\begin{proof}
For $x,y\in\mathbb{R}^{L}$ write $\pi_{y}(x)=\left\langle x,y\right\rangle $,
so $\pi_{y}:\mathbb{R}^{L}\rightarrow\mathbb{R}$.
Let $c$ be the constant verifying Condition (D) for the $\gamma_{i}$.
For $N\in\mathbb{N}$ set $\delta_{N}=N^{-(L+1)(c+1)}$, and 
\begin{align*}
\Omega_{N,i} & =\{\lambda\in(0,\infty)^{L}\mid\exists0\leq k,n_{j}\leq N\text{ not all zero with }|-k\gamma_{i}+\sum_{j=1}^{L}n_{j}\lambda_{j}|<\delta_{N}\}\\
 & =(0,\infty)^{L}\cap\bigcup_{0\neq n\in([0,N]\cap\mathbb{Z})^{L}}\bigcup_{k=1}^{N}\pi_{n}^{-1}(B_{\delta_{N}}(-\gamma_{i}k)).
\end{align*}
The last equality again uses
considerations of signs: Any $k,n_{1},\ldots,n_{L}$ satisfying the condition
in the first line, must satisfy $k\neq0$ and $(n_{1},\ldots,n_{L})\neq0$.

It suffices for us to show that $\bigcap_{i=1}^{M}\Omega_{N,i}=\emptyset$
for every $N\in\mathbb{N}$, since this means that for every $\lambda\in(0,1)^{L}$
and every $N$ there is an $i$ such that $\lambda\notin\Omega_{N,i}$,
and hence there is some $i$ such that $\lambda\notin\Omega_{N,i}$
for infinitely many $N$. This implies Condition (d) with exponent $(L+1)(c+1)$.

Fix $N$. Then 
\[
\bigcap_{i=1}^{M}\Omega_{N,i}\subseteq \bigcup_{0\neq n^{(1)},\ldots,n^{(M)}\in([0,N]\cap\mathbb{Z})^{L}}\bigcup_{k_{1},\ldots,k_{M}=1}^{N}\left(\bigcap_{i=1}^{M}\pi_{n^{(i)}}^{-1}(B_{\delta_{N}}(-\gamma_{i}k_{i}))\right).
\]
We must show that each of the inner intersections on the right is
empty. To this end, fix $n^{(1)},\ldots,n^{(M)}$ and $ k_{1},\ldots,k_{M}$
and suppose by way of contradiciton that 
\begin{align*}
\lambda & \in\bigcap_{i=1}^{M}\pi_{n^{(i)}}^{-1}(B_{\delta_{N}}(-\gamma_{i}k_{i})).
\end{align*}
Then there exist $0<|\varepsilon_{i}|<\delta_{N}$, for $i=1,\ldots,M$,
such that
\[
\left\langle n^{(i)},\lambda\right\rangle =-\gamma_{i}k_{i}+\varepsilon_{i}.
\]
Since $M>L$, one of the $n^{(i)}$, say $n^{(M)}$, is a rational
linear combination of the others, which we can write as $\ell_{M}n^{(M)}=\sum_{i=1}^{M-1}\ell_{i}n^{(i)}$
for integers $\ell_{i}$. Since $n^{(i)}$ all have coefficients of
order $N$, the coefficients $\ell_{i}$ are of order $O(N^{L})$.
We have
\begin{align*}
\ell_M(-\gamma_{M}k_{M}+\varepsilon_{M} )& =\left\langle \ell_Mn^{(M)},\lambda\right\rangle \\
 & =\left\langle \sum_{i=1}^{M-1}\ell_{i}n^{(i)},\lambda\right\rangle \\
 & =\sum_{i=1}^{M-1}\ell_{i}k_{i}\gamma_{i}+\sum_{i=1}^{M-1}\ell_{i}\varepsilon_{i}.
\end{align*}
Using the definition of $\delta_{N}$ and $|\varepsilon_{i}|<\delta_{N}$,
the identity above yields
\[
\ell_Mk_{M}\gamma_{M}+\sum_{i=1}^{M-1}\ell_{i}k_{i}\gamma_{i}=O_{M}(N^{L}\delta_{N})=O_{M}(N^{L}\cdot N^{-(L+1)(c+1)})=o((N^{L+1})^{c}).
\]
The left hand side is a non-trivial integer combination of
$\gamma_{1},\ldots,\gamma_{M}$ with coefficients of order $O(N^{L+1})$,
so we have a contradiction to the hypothesis that $\{\gamma_{i}\}$
satisfies Condition (D) with exponent $c$.
\end{proof}
We can now prove Theorem \ref{thm:main}. Write $X=\bigcup_{i=1}^{M}\varphi_{i}(X),Y=\bigcup_{j=1}^{L}\psi_{j}(Y)$
and suppose that $\alpha_{i}$ are algebraic and satisfy $\rank_{\mathbb{Q}}\{\log\alpha_{i}\}_{i\in I}>L$.
We shall show that $\mathcal{E}(X,Y)=\emptyset$.

We can assume without loss of generality that $M=\rank_{\mathbb{Q}}\{\log\alpha_{i}\}_{i\in I}$.
Otherwise, remove $\varphi_i$ corresponding to redundant $\alpha_{i}$, and restrict discussion to the resulting
self-similar subset of $X$. If this subset does not embed in $Y$
then neither does $X$.

Let $\uu^{(i)},\vv^{(i)}$, $i=1,\ldots,M$, be as in Lemma \ref{lem:sub-IFSs}.
Set $\alpha'_{i}=\left\Vert \varphi_{\uu^{(i)}}\right\Vert $. These
are products of algebraic numbers so they are algebraic, and by part (1) of the Lemma, the $\gamma_{i}=-\log\alpha'_{i}$
are $\mathbb{Q}$-linearly independent. By Baker's Theorem the $\gamma_{i}$
satisfy Property (D).

Now let $\beta_{1},\ldots,\beta_{L}\in(0,1)$ and $\lambda_j=-\log\beta_j$. By the previous
lemma we can find $i$ such that $\gamma_{i},\lambda_{1},\ldots,\lambda_{L}$
satisfy property (d). Therefore, for the homogeneous self-similar
set $X'$ defined by $\varphi_{\uu^{(i)}},\varphi_{\vv^{(i)}}$ and
$Y$ defined by any maps with scaling constants $\beta_{1},\ldots,\beta_{L}$,
the corresponding set $\Lambda$ satisfies Condition (\uR) by Lemma \ref{lem:d-implies-R}. By Theorem \ref{thm:main} we now have
$\mathcal{E}(X,Y)\subseteq\mathcal{E}(X',Y)=\emptyset$, as claimed.

\subsection{Proof of Theorem \ref{thm:generic}}

For this section we will use the notion of packing dimension (also known as modified box dimension) of a
set $E$:
\[
\dim_{P}E=\inf\{\sup_{n}\overline{\dim}_{B}E_{n}\mid E=\bigcup_{n=1}^{\infty}E_{n}.\}
\]
See e.g. \cite{mattila1999geometry} for a discussion of this notion. We will require the following standard properties:
\begin{itemize}
\item If $E_{n}\subseteq\mathbb{R}$ and $E=\liminf E_{n}=\bigcup_{n}\bigcap_{k\geq n}E_{k}$,
then for any sequence $r_{k}\rightarrow0$ with $\limsup \frac{\log r_{k+1}}{\log r_k}=1$,
\begin{equation}
\dim_{P}E\leq\sup_{n}\limsup_{k\rightarrow\infty}\frac{\log\cov(E_{k},r_{k})}{\log(1/r_{k})}\label{eq:pdim-of-liminf}
\end{equation}

\item  For any two Borel sets, \begin{equation}
\dim_{P}E\times F\leq\dim_{P}E+\dim_{P}F\label{eq:product-dim}
\end{equation}
\end{itemize}

\subsubsection*{Proof of Part (1)}

\begin{lem}\label{lem:lambdas-fixed}
Fix $\lambda_{1},\ldots,\lambda_{L}>0$. Then the set
\[
E=\{\gamma>0\mid\{-\gamma,\lambda_{1},\ldots,\lambda_{L}\}\text{ does not satisfy Condition (d)}\}
\]
has Packing dimension zero.
\end{lem}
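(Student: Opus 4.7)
The plan is to prove $\dim_P E = 0$. By countable stability of packing dimension, it suffices to show $\dim_P(E \cap (0, M)) = 0$ for every $M > 0$. Unpacking the negation of Condition (d): $\gamma \in E \cap (0, M)$ iff for every $c > 0$, there exist arbitrarily large $N$ and non-zero integer tuples $(n_0, n_1, \ldots, n_L) \in \{0, \ldots, N\}^{L+1}$ with $|n_0 \gamma - \sum_{j=1}^L n_j \lambda_j| \leq N^{-c}$. Since $\gamma$ is bounded and $\min_j \lambda_j > 0$, for $N$ sufficiently large (depending on $c$, $M$ and the $\lambda_j$'s) a sign/size argument forces $n_0 \geq 1$. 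Thus $\gamma$ is approximable, to every polynomial rate, by ``rational combinations'' $\sum n_j \lambda_j / n_0$ of the $\lambda_j$'s with bounded height. Let $F_{N, c} \subseteq (0, M)$ denote the corresponding approximation set, which is a union of at most $C_M N^{L+1}$ intervals of length $O(N^{-c})$ centered at these rationals, so that $E \cap (0, M) = \bigcap_c \limsup_N F_{N, c}$.

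The main idea is to use the modified upper box dimension characterization of packing dimension,
\[
   \dim_P E = \inf\bigl\{\textstyle\sup_n \overline{\dim}_B E_n : E \subseteq \bigcup_n E_n\bigr\},
\]
and decompose $E \cap (0, M)$ into countably many sets of arbitrarily small upper box dimension. A natural countable family is as follows: for each $(m, c_0) \in \mathbb{N}^2$, let
\[
  E_{m, c_0} = \{\gamma \in E \cap (0, M) : \gamma \in F_{N, c} \text{ for some } N \leq m^c, \text{ for every } c \geq c_0\}.
\]
For $\gamma \in E_{m, c_0}$ and scale $r > 0$, choosing $c(r) = \lceil\sqrt{\log(1/r)/\log m}\rceil$ makes the two constraints ``height $\leq m^{c(r)}$'' and ``error $\leq r$'' compatible; this places $\gamma$ within $r$ of one of at most $O((m^{c(r)})^{L+1})$ centers, giving $\cov(E_{m, c_0}, r) \leq m^{c(r)(L+1)}$ and ratio $\log \cov/\log(1/r) = O(1/c(r)) \to 0$ as $r \to 0$. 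Hence $\overline{\dim}_B E_{m, c_0} = 0$ for each pair.

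The principal obstacle is verifying that $E \cap (0, M)$ is in fact covered (up to a packing-dim-zero residual) by the countable family $\{E_{m, c_0}\}_{(m, c_0) \in \mathbb{N}^2}$. The delicate point is that, although $\gamma \in E$ guarantees infinitely many approximations at every level $c$, the minimum heights $N_c(\gamma) = \min\{N : \gamma \in F_{N, c}\}$ may grow super-exponentially, escaping any fixed bound $N_c \leq m^c$. One handles these ``super-Liouville'' $\gamma$'s by enlarging the countable family to include tower-function bounds of the form $m^{m^{\cdots^{m^c}}}$ of successively higher iterates (each again yielding $\overline{\dim}_B$-zero pieces by the same balancing argument), or by a separate direct sparsity estimate showing the residual set of $\gamma$'s whose approximation heights grow faster than every element of a chosen countable family has packing dimension zero. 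Coordinating this diagonal argument is the technical crux; once it is in place, $\dim_P(E \cap (0, M)) = 0$, and consequently $\dim_P E = 0$.
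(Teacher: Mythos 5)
Your unpacking of the negation of Condition~(d) is not quite right: the negation of ``there exists $c>0$ such that for infinitely many $N$ the separation holds'' is ``for every $c>0$, for \emph{all sufficiently large} $N$ there is an approximating tuple,'' not ``for arbitrarily large $N$.'' So $E$ is characterized by a $\liminf$ over $N$, not a $\limsup$. This actually works in your favor (you are considering a superset), but the distinction is precisely what the correct argument exploits and what your argument fails to exploit.

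The real gap is the one you acknowledge: the sets $E_{m,c_0}$ need not cover $E$, because for a given $\gamma\in E$ the minimal heights $N_c^*(\gamma)$ may grow faster than any fixed $m^c$. Your proposed remedies do not close this. Enlarging the family to tower-function bounds still leaves, for any countable family of growth rates, a nonempty set of ``even more Liouville'' $\gamma$'s escaping all of them; and a ``separate direct sparsity estimate'' for that residual is not a smaller problem --- it is essentially the lemma itself, since the residual is exactly where the covering argument is hardest. The $\overline{\dim}_B E_{m,c_0}=0$ computation is fine, but without the covering the proof does not go through, and ``coordinating the diagonal argument'' is not a technicality you can wave at; it is the whole difficulty.

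The paper avoids all of this by never trying to control the thresholds $N_c^*(\gamma)$ across different $c$ simultaneously. Instead it fixes $c$ and observes $E\cap(0,T)\subseteq E_c:=\liminf_N F_{N,c}$. Writing $E_c=\bigcup_n\bigcap_{k\ge n}F_{k,c}$, each piece $\bigcap_{k\ge n}F_{k,c}$ is contained in $F_{k,c}$ for \emph{every} $k\ge n$, hence can be covered by $O(k^{L+1})$ intervals of length $\lesssim k^{-c}$ at every such scale. This gives $\overline{\dim}_B\bigl(\bigcap_{k\ge n}F_{k,c}\bigr)\le(L+1)/c$ \emph{uniformly in $n$}, so by countable stability $\dim_P E_c\le(L+1)/c$ (this is the inequality \eqref{eq:pdim-of-liminf}). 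Since $E\cap(0,T)\subseteq E_c$ for every $c$, letting $c\to\infty$ gives $\dim_P(E\cap(0,T))=0$ with no residual to worry about. The moral: work one $c$ at a time and use the $\liminf$ structure, rather than stratifying $E$ by the joint growth rate over all $c$.
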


\begin{proof}
Fix $T>0$. We shall show that $\dim_{P}E\cap(0,T)=0$ and hence
$\dim_{P}E=0$.

By considering signs, any linear combination of $\gamma,\lambda_{1},\ldots,\lambda_{L}$
with non-negative coefficients that come close to zero must give $\gamma$
and at least one $\lambda_{j}$ non-zero coefficients. Thus, $E\cap(0,T)=\bigcap_{c=1}^{\infty}E_{c}$,
where
\begin{align*}
E_{c} & =\liminf_{N\rightarrow\infty}\left(\bigcup_{0\neq n\in([0,N]\cap\mathbb{Z})^{L}}\bigcup_{k=1}^{N}B_{N^{-c}/k}(\sum_{j=1}^{L}\frac{n_{j}}{k}\lambda_{j}))\cap(0,T)\right)
\end{align*}
Each term in the liminf is the union of $O(N^{L+1})$ balls of radius
at most $N^{-c}$. By \eqref{eq:pdim-of-liminf} this shows  that $\dim_{P}(E_{c}\cap(0,T))\leq(L+1)/c$,
so 
\[
\dim(E\cap(0,T))\leq\liminf_{c\rightarrow\infty}\dim E_{c}\leq\lim_{c\rightarrow\infty}\frac{L+1}{c}=0
\]
as claimed.
\end{proof}
Part (1) of Theorem \ref{thm:generic} can now be proved as follows.
Let $\beta_{1},\ldots,\beta_{L}\in(0,1)$, and set $\lambda_{j}=-\log\beta_{j}$.
Let $E\subseteq\mathbb{R}$ denote the set defined in Lemma \ref{lem:lambdas-fixed} 
for these $\lambda_{j}$'s.

Denote by $F$ the set of $(\alpha_{1},\ldots,\alpha_{M})\in(0,1)^{M}$
for which the conjecture fails. So $X$ embeds in $Y$, but one of
the $\log\alpha_{i}$'s is not in the $\mathbb{Q}$-span of the $\lambda_{j}$'s. 

Apply Lemma \ref{lem:sub-IFSs} to obtain $\uu^{(i)},\vv^{(i)}$ as
in the lemma. Write $\gamma_{i}=-\log\left\Vert \varphi_{\uu^{(i)}}\right\Vert $
and $n^{(i)}=n(\uu^{(i)})$. Let $X_{i}\subseteq X$ be the homogeneous
self-similar set\footnote{The reason we pass to words $\uu^{(i)}$ and $\gamma_i$ instead of working directly with the original maps and with $-\log\alpha_i$, is that knowing that $-\alpha_i\in E$ does not give us a homogeneous self-similar set in X with contraction $\alpha_i$. To get such a set we need at least two maps with contraction $\alpha_i$. That is what the pairs $\uu^{(i)},\vv^{(i)}$ give us.} defined by $\varphi_{\uu^{(i)}},\varphi_{\vv^{(i)}}$. 

Since $X$ embeds in $Y$, all of the $X_{i}$ embed in $Y$, so the
corresponding sets $\Lambda_{i}$ cannot satisfy Condition (\uR).
In particular, by Lemma \ref{lem:d-implies-R} $-\gamma_{i},\lambda_{1},\ldots,\lambda_{L}$ do not
satisfy Condition (d). Thus $\gamma_{i}\in E$. 

For $\alpha\in F$ write $\log\alpha=(\log\alpha_i)_{i=1}^M$ and $\log(F)=\{\log\alpha\mid\alpha\in F\}$. It follows that 
\[
\log F\subseteq\bigcap_{i=1}^{M}\pi_{n^{(i)}}^{-1}(E)
\]
writing $\pi(x)=(\pi_{n^{(1)}}x,\ldots,\pi_{n^{(M)}}x)$ this says
\[
\log F\subseteq\pi^{-1}(\prod_{i=1}^M E)
\]
Since the $n^{(i)}$ are linearly independent, $\pi$ is a linear
bijection of $\mathbb{R}^{M}$, so 
\[
\dim_{P}\log F\leq\dim_{P}E^{M}\leq\sum_{i=1}^{M}\dim_{P}E=0.
\]
We used \eqref{eq:product-dim} to bound the dimension of $\prod_{i=1}^M E$. Finally, $\log$ is locally bi-Lipschitz, so it does not change packing
dimension, proving the theorem.

\subsubsection{Proof of Part (2)}
\begin{lem}
Fix $\gamma>0$. Then for every $L\in\mathbb{N},$the set
\[
E=\{\lambda\in(0,\infty)^{L}\mid-1,\frac{\lambda_{1}}{\gamma},\ldots,\cdot\frac{\lambda_{L}}{\gamma}\text{ do not satisfy Condition (d)}\}
\]
has packing dimension at most $L-1$.
\end{lem}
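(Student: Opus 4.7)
The plan is to follow the same template as Lemma~\ref{lem:lambdas-fixed}: write $E$ as the intersection over $c$ of liminf's of explicit unions of slabs, and then apply the packing dimension bound \eqref{eq:pdim-of-liminf} with a carefully chosen radius sequence. The extra dimension $L-1$ will come from the fact that each constraint now carves out a slab, i.e.\ an $(L-1)$-dimensional object in $\mathbb{R}^L$, rather than a ball.

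First I would fix $T>0$ and show $\dim_P (E \cap (0,T)^L) \le L-1$, which suffices. By negating Condition (d), $\lambda \in E$ if and only if for every $c>0$, for all but finitely many $N$, there exist integers $0 \le k,n_1,\ldots,n_L \le N$, not all zero, with $\lvert -k + \sum_j n_j \lambda_j/\gamma\rvert \le N^{-c}$. Multiplying through by $\gamma$ and arguing by signs exactly as in the proof of Lemma~\ref{Lemma nonhom}, for $\lambda \in (0,T)^L$ and $N$ large we automatically have $k \neq 0$ and $n=(n_1,\ldots,n_L) \neq 0$. Hence
\[
E \cap (0,T)^L \;=\; \bigcap_{c \in \mathbb{N}} E_c, \qquad E_c := \liminf_{N\to\infty} \bigl(A_{N,c} \cap (0,T)^L\bigr),
\]
where
\[
A_{N,c} \;=\; \bigcup_{k=1}^{N} \;\bigcup_{0 \neq n \in ([0,N]\cap\mathbb{Z})^L} \bigl\{\lambda \in \mathbb{R}^L : |\langle n,\lambda \rangle - k\gamma| \le \gamma N^{-c}\bigr\}.
\]

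Next, I would count covers. Each set in the inner union is a slab in $\mathbb{R}^L$ of thickness $2\gamma N^{-c}/|n| \le 2\gamma N^{-c}$, intersected with the bounded box $(0,T)^L$. For any $r \ge 2\gamma N^{-c}$, such a slab-in-a-box is covered by $O_T(r^{-(L-1)})$ balls of radius $r$, since one only needs an $(L-1)$-dimensional cover of a section. The number of $(k,n)$ pairs is at most $N \cdot (N+1)^L = O(N^{L+1})$, so
\[
\mathrm{cov}\bigl(A_{N,c} \cap (0,T)^L,\, r\bigr) \;=\; O_T\bigl(N^{L+1} \cdot r^{-(L-1)}\bigr) \qquad \text{for } r \ge 2\gamma N^{-c}.
\]

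Then I would apply \eqref{eq:pdim-of-liminf} with $r_N = N^{-c}$; the ratio $\log r_{N+1}/\log r_N \to 1$ is immediate. We get
\[
\dim_P (E_c \cap (0,T)^L) \;\le\; \limsup_{N\to\infty} \frac{\log\bigl(N^{L+1} \cdot N^{c(L-1)}\bigr)}{\log N^{c}} \;=\; (L-1) + \frac{L+1}{c}.
\]
Since $E \cap (0,T)^L \subseteq E_c \cap (0,T)^L$ for every $c$, letting $c \to \infty$ gives $\dim_P(E \cap (0,T)^L) \le L-1$, and taking $T \to \infty$ using countable stability of packing dimension yields $\dim_P E \le L-1$.

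The only nontrivial step is the slab cover count (the $r^{-(L-1)}$ factor rather than $r^{-L}$), which is exactly the gain needed to get $L-1$ instead of $L$. Everything else is a direct adaptation of Lemma~\ref{lem:lambdas-fixed}.
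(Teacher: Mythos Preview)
Your proof is correct and follows essentially the same approach as the paper's: write $E\cap(0,T)^L$ as $\bigcap_c E_c$ with each $E_c$ a $\liminf$ of unions of $O(N^{L+1})$ slabs, use that a slab-in-a-box is covered by $O(r^{-(L-1)})$ balls, and apply \eqref{eq:pdim-of-liminf} with $r_N=N^{-c}$ to get $\dim_P E_c\le (L-1)+O(1/c)$. The only slip is that your stated range ``$r\ge 2\gamma N^{-c}$'' need not include $r_N=N^{-c}$ when $\gamma>1/2$, but since the slab thickness is $O_\gamma(N^{-c})$ the cover bound $O_{\gamma,T}(r_N^{-(L-1)})$ still holds with the constant absorbing $\gamma$, so the argument goes through unchanged.
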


\begin{proof}
Fix $T>0$. We shall show that $\dim_{P}E\cap(0,T)\leq L-1$ and hence $\dim_{P}E\leq L-1$. We have $E\cap(0,T)^L=\bigcap_{c=1}^{\infty}E_{c}$, where
\begin{align*}
E_{c} & =\liminf_{N\rightarrow\infty}\left(\bigcup_{0\neq n\in([0,N]\cap\mathbb{Z})^{L}}\bigcup_{k=1}^{N}\pi_{n/\gamma}^{-1}(B_{N^{-c}}(k))\cap(0,T)^{L}\right)
\end{align*}
Now, for any $\delta>0$, the set $\pi_{n/\gamma}^{-1}(B_{\delta}(k))$
is the $\gamma\delta/\left\Vert n\right\Vert $-neighborhood of
an affine subspace normal to $n$, so $\pi_{n/\gamma}^{-1}(B_{\delta}(k))\cap(0,T)^{L}$
can be covered by $O_{\gamma,T}(\delta^{-L})$ balls of radius $\delta$.
Noting that the set in the liminf above is the union of $(2N+1)^{L+1}$
such sets, we see that each term in the liminf can be covered by $O(N^{L+1}\cdot(N^{-c})^{L-1})=O(N^{-c(L-1-L/c)})$
balls of radius $N^{-c}$.  By \eqref{eq:pdim-of-liminf} this shows  that $\dim_{P}E_{c}\leq L-1+L/c$.
Thus 
\[
\dim_{P}E=\lim_{c\rightarrow\infty}\dim_{P}E_{c}\leq L-1
\]
as claimed.
\end{proof}
Part (2) of Theorem \ref{thm:generic} follows directly from the Lemma
using the dictionary $\gamma=-\log\alpha$ and $\lambda_{j}=-\log\beta_{j}$ and Lemma \ref{lem:d-implies-R}.

\bibliographystyle{plain}
\bibliography{bib}

\begin{thebibliography}{10}

\bibitem{Algom2018}
Amir Algom.
\newblock Affine embeddings of {C}antor sets on the line.
\newblock {\em J. Fractal Geom.}, 5(4):339--350, 2018.

\bibitem{Algom2020}
Amir Algom.
\newblock Affine embeddings of {C}antor sets in the plane.
\newblock {\em J. Anal. Math.}, 140(2):695--757, 2020.

\bibitem{austin2020new}
Tim Austin.
\newblock A new dynamical proof of the {S}hmerkin-{W}u theorem.
\newblock {\em J. Mod. Dyn.}, 18:1--11, 2022.

\bibitem{Baker1972}
A.~Baker.
\newblock A sharpening of the bounds for linear forms in logarithms.
\newblock {\em Acta Arith.}, 21:117--129, 1972.

\bibitem{Baker2021}
Simon Baker.
\newblock New dimension bounds for {$\alpha\beta$} sets.
\newblock {\em J. Number Theory}, 228:59--72, 2021.

\bibitem{bishop2013fractal}
Christopher~J. Bishop and Yuval Peres.
\newblock {\em Fractals in probability and analysis}, volume 162 of {\em
  Cambridge Studies in Advanced Mathematics}.
\newblock Cambridge University Press, Cambridge, 2017.

\bibitem{ElekesKeletiMathe2010}
M{\'a}rton Elekes, Tam{\'a}s Keleti, and Andr{\'a} M{\'a}th{\'e}.
\newblock Self-similar and self-affine sets: measure of the intersection of two
  copies.
\newblock {\em Ergodic Theory and Dynamical Systems}, 30(2):399--440, 2010.

\bibitem{FengHuangRui2014}
De-Jun Feng, Wen Huang, and Hui Rao.
\newblock Affine embeddings and intersections of {C}antor sets.
\newblock {\em J. Math. Pures Appl. (9)}, 102(6):1062--1079, 2014.

\bibitem{feng2009structures}
De-Jun Feng and Yang Wang.
\newblock On the structures of generating iterated function systems of {C}antor
  sets.
\newblock {\em Adv. Math.}, 222(6):1964--1981, 2009.

\bibitem{FengXiong2018}
De-Jun Feng and Ying Xiong.
\newblock Affine embeddings of {C}antor sets and dimension of
  {$\alpha\beta$}-sets.
\newblock {\em Israel J. Math.}, 226(2):805--826, 2018.

\bibitem{Furstenberg1970}
Harry Furstenberg.
\newblock Intersections of {C}antor sets and transversality of semigroups.
\newblock In {\em Problems in analysis ({S}ympos. in honor of {S}alomon
  {B}ochner, {P}rinceton {U}niv., {P}rinceton, {N}.{J}., 1969)}, pages 41--59.
  Princeton Univ. Press, Princeton, NJ, 1970.

\bibitem{Furstenberg2008}
Hillel Furstenberg.
\newblock Ergodic fractal measures and dimension conservation.
\newblock {\em Ergodic Theory Dynam. Systems}, 28(2):405--422, 2008.

\bibitem{hochman2010dynamics}
Michael Hochman.
\newblock Dynamics on fractals and fractal distributions.
\newblock {\em arXiv preprint arXiv:1008.3731}, 2010.

\bibitem{Hochman2024}
Michael Hochman.
\newblock On the box dimension of {$\alpha\beta$}-sets.
\newblock {\em perprint}, 2024.

\bibitem{hochman2009local}
Michael Hochman and Pablo Shmerkin.
\newblock Local entropy averages and projections of fractal measures.
\newblock {\em Ann. of Math. (2)}, 175(3):1001--1059, 2012.

\bibitem{kaenmaki2015weak}
Antti K\"aenm\"aki and Eino Rossi.
\newblock Weak separation condition, {A}ssouad dimension, and {F}urstenberg
  homogeneity.
\newblock {\em Ann. Acad. Sci. Fenn. Math.}, 41(1):465--490, 2016.

\bibitem{mattila1999geometry}
Pertti Mattila.
\newblock {\em Geometry of sets and measures in {E}uclidean spaces}, volume~44
  of {\em Cambridge Studies in Advanced Mathematics}.
\newblock Cambridge University Press, Cambridge, 1995.
\newblock Fractals and rectifiability.

\bibitem{Orponen2021}
Tuomas Orponen.
\newblock On the assouad dimension of projections.
\newblock {\em Proceedings of the London Mathematical Society},
  122(2):317--351, 2021.

\bibitem{Shmerkin2019}
Pablo Shmerkin.
\newblock On {F}urstenberg's intersection conjecture, self-similar measures,
  and the {$L^q$} norms of convolutions.
\newblock {\em Ann. of Math. (2)}, 189(2):319--391, 2019.

\bibitem{Wu2019}
Meng Wu.
\newblock A proof of {F}urstenberg's conjecture on the intersections of
  {$\times p$}- and {$\times q$}-invariant sets.
\newblock {\em Ann. of Math. (2)}, 189(3):707--751, 2019.

\bibitem{Yu2019}
Han Yu.
\newblock Multi-rotations on the unit circle.
\newblock {\em J. Number Theory}, 200:316--328, 2019.

\bibitem{Yu2021imrov}
Han Yu.
\newblock An improvement on {F}urstenberg's intersection problem.
\newblock {\em Trans. Amer. Math. Soc.}, 374(9):6583--6610, 2021.

\end{thebibliography}

\bigskip
\bigskip
\footnotesize

\noindent{}\texttt{Department of Mathematics, University of Haifa at Oranim, Tivon 36006, Israel\\ email: amir.algom@math.haifa.ac.il}
\newline

\noindent{}\texttt{Department of Mathematics, The Hebrew University of Jerusalem, Jerusalem, Israel\  and Warwick University, UK\\ email: michael.hochman@mail.huji.ac.il}
\newline

\noindent{}\texttt{Department of Mathematical Sciences, P.O. Box 3000, 90014 University of Oulu, Finland\\email:  meng.wu@oulu.fi}

\end{document}